\newtheorem{theorem}{Theorem}
\newtheorem{theoremc}{Theorem}
\newtheorem{theoremd}{Theorem}
\newtheorem{theoreme}{Theorem}
\newtheorem{rk}[theoremc]{Remark}
\newtheorem{cor}[theoremd]{Corollary}
\newtheorem{lem}[theoreme]{Lemma}
\newtheorem{prop}[theorem]{Proposition}
\newenvironment{proof}[1][Proof]{\textbf{#1.} }{\qed \vspace{5pt}}
\newcommand\bib[1]{\bibitem[#1]{#1}}
\newcommand\abz{\hspace{13.5pt}}
\newcommand\qed{\phantom{\underline{y}}\hfill\hfill$\square$}
\renewcommand\a{\alpha}
\renewcommand\b{\beta}
\newcommand\C{{\mathbb C}}
\renewcommand\d{\delta}
\newcommand\D{{\cal D}}
\newcommand\E{{\mathcal E}}
\newcommand\g{\gamma}
\newcommand\hps{\hskip-16pt . \hskip2pt}
\renewcommand\l{\lambda}
\newcommand\La{\Lambda}
\newcommand\op[1]{\mathop{\rm #1}\nolimits}
\newcommand\ot{\otimes}
\newcommand\p{\partial}
\newcommand\R{{\mathbb R}}
\renewcommand\t{\tau}
\newcommand\ti{\tilde}
\newcommand\vp{\varphi}
\newcommand\ve{\varepsilon}
\newcommand\z{\sigma}
\newcommand\imag{\mathrm{i}}
\DeclareFontFamily{U}{wncyr}{}
\DeclareFontShape{U}{wncyr}{m}{it}{%
  <5><6><7><8><9>gen*wncyi%
  <10><10.95><12><14.4><17.28><20.74><24.88>wncyi10}{}
\DeclareSymbolFont{MathRussLetters}{U}{wncyr}{m}{it}
\DeclareMathSymbol{\re}{\mathalpha}{MathRussLetters}{3}
\renewcommand{\@oddhead}{\hfil Liouville metrics\hfil}
\renewcommand{\@evenhead}{\hfil Boris Kruglikov\hfil}
\renewcommand{\@begintheorem}[2]{\begin{trivlist}\it
 \item[\hspace{\labelsep}{\bf #1\ #2.}]}
\renewcommand{\@endtheorem}{\end{trivlist}}
\newcommand\Cc{\let\mathcal\mathscr\cal C}
\newcommand\Jj{\let\mathcal\mathscr\mathcal J}
\newcommand\Pp{\let\mathcal\mathscr\mathcal P}
\begin{document}

 \title{Invariant characterization of Liouville metrics\\
 and polynomial integrals}
 \author{Boris Kruglikov}
 \date{}
 \maketitle

 \vspace{-14.5pt}
 \begin{abstract}
A criterion in terms of differential invariants for a metric on a
surface to be Liouville is established. Moreover, in this paper we
completely solve in invariant terms the local mobility problem of a
2D metric, considered by Darboux: How many quadratic in momenta
integrals does the geodesic flow of a given metric possess? The
method is also applied to recognition of other polynomial integrals
of geodesic flows.
 \footnote{MSC numbers: 53D25, 53B20; 37J15, 53A55, 70H06.\\
 Keywords: geodesic flow, Killing field, Liouville metric, polynomial integrals,
degree of mobility, differential invariant, compatibility, multi-bracket, solvability.}%
 \end{abstract}

\section*{Introduction}\label{S0}

 \hspace{13.5pt}
The problem of recognizing by a metric, how many integrals admits
its geodesic flow is classical. In this paper we study
locally metrics on surfaces. We will look for the integrals
analytic in momenta.

By Whittaker theorem \cite{W} existence of such
an integral is equivalent to existence of an integral polynomial in
momenta. Note that locally geodesic flows are integrable, but the
corresponding integrals are usually analytic only on $T^*M\setminus M$.
So in general polynomial integrability requires certain conditions even
locally.

The integrals of degree one in momenta correspond to surfaces of
revolution, locally $ds^2=f(x)(dx^2+dy^2)$. It is an easy fact that if such integrals exist,
then there are
either one (generically) or three (space form). We provide a
precise criterion for determining existence of a local linear integral
(Killing vector field).

The next interesting case concerns geodesic flows with quadratic in
momenta integrals. They correspond to Liouville metrics. The local
analytic form of such metrics near a generic point is well-known
\cite{D,B}: $ds^2=(f(x)+h(y))(dx^2+dy^2)$ and a metric has an
additional quadratic integral iff it can be transformed into such a
form.

However no criterion, when the metric
is Liouville has been previously obtained, except for the paper \cite{Su}
(the first, rather unsuccessful attempt was done in \cite{V}).
However the criterion of this work was not explicit. Neither did it contain invariant
formulae, making it difficult even to decide how many differential invariants
characterize Liouville metric and which order they have.

As the main result of this paper we resolve the classical problem of
recognition for Liouville metrics and provide an explicit criterion
written via a basis of scalar differential invariants of the metric.

Moreover we shall determine the number of quadratic integrals, which
coincides with the degree of mobility of a Riemannian metric on a
surface. It can be 6 (space form), 4 (case characterized in \cite{D}
though not through differential invariants; note that in classical
works the Hamiltonian is disregarded, so that there are 5, 3 etc
integrals in their way of counting), 3 (the case studied by K\oe
ning \cite{Koe}), 2 (general Liouville form) or 1 (metrics with no
additional quadratic integrals). Each of these cases will be
characterized via an invariant condition written in terms of
differential invariants of the Riemannian metric.

The method of our study is the Cartan's prolongation-projection
method: we write the system of PDEs for existence of a quadratic
integral and subsequently calculate the compatibility conditions. If
they are trivial, the system is compatible and we stop. Otherwise we
add new equations, the space of solutions (which is a
finite-dimensional linear space from the beginning -- the system is
of finite type) shrinks and we continue.

For effectiveness of the method we should have explicit formulas
for compatibility conditions, but they are given by the result of
\cite{KL$_2$}.

The procedure stops in several steps because finally we arrive to
only one possible quadratic integral, which is just the
Hamiltonian, an obvious integral of the geodesic flow. The
prolongation-projection scheme usually is characterized by the
rapidly growing complexity with each step. It is also true in our
problem, but in this case we manage to arrive to the very end of
the method and to establish the solvability criterion.

The problem of invariant characterization of Liouville metrics was
initiated in paper \cite{KL$_3$} in a collaboration with V.Lychagin
as an application of our general compatibility criterion.
The results are repeated in a revised form in sections \ref{S3}-\ref{S4}.
Moreover the general idea of solution to the problem was sketched there, but the
complete answer appears here for the first time.

Let us also indicate that the solution of the problem presented here
is expressed via scalar differential invariants and we especially
care to minimize the number/order of the invariants.

At the end of the paper we discuss the problem of higher degree integrals
and make some claims and conjectures about dimension of the space $\Jj_n$
of polynomial in momenta integrals $F$ of $\op{deg}F=n$.

\smallskip

{\bf Acknowledgment.} While solving the problem I profited from
discussions with V. Matveev, V. Lychagin and E. Ferapontov. I wish to thank them all.

\section{\hps PDEs and Prolongation-Projection scheme}\label{S1}

 \abz
In this section we deduce the basic system of equations for polynomial
integrability, discuss compatibility criterions and formulate the general
scheme of investigating solvability.

This paper deals mostly with local existence problem, so that
whenever opposite is not explicitly stated all statements should be
assumed local in $M$. Moreover we impose the usual regularity
assumption. Regular points form a nonempty open set, but it does not
need to be of full measure in the $C^\infty$ case (in analytic case
regular points are generic) and such pathological examples exist
even in $\R^2$.

Let $(x,y)$ be local coordinates on $M^2$ and $p_x,p_y$ be the
corresponding momenta on $T^*M$. Writing the metric
$ds^2=g_{ij}dx^idx^j$ we express Hamiltonian of the geodesic flow as
$H=g^{11}p_x^2+2g^{12}p_xp_y+g^{22}p_y^2$, where the matrix $g^{ij}$
is inverse to the matrix $g_{ij}$ of the metric $g$.

A homogeneous term of an integral is obviously an integral, and so
we study a function $F_n=\sum_{i+j=n}u_{ij}(x,y)p_x^ip_y^j$ on
$T^*M$. Involutivity condition, i.e. vanishing of the Poisson
bracket $\{H,F_n\}=0$, is equivalent to $(n+2)$ equations
$E_1=0,\dots,E_{n+2}=0$ on $(n+1)$ unknown function
$u_{n0}(x,y),\dots,u_{0n}(x,y)$.

This system $\E$ is of generalized complete intersection type studied in \cite{KL$_3$}.
The compatibility criterion developed there states that the system is formally integrable iff the following multi-bracket vanishes:
 \begin{equation}\label{multbr}
E_{n+3}=\{E_1,\dots,E_{n+2}\}\ (\op{mod}E_1,\dots,E_{n+2})=0.
 \end{equation}
For linear differential operators this is defined as follows.

Let $E_i(u)=\sum_{j=0}^n E_i^ju_{j(n-j)}$ be representation of the vector operator in components $E_i=(E_i^0,\dots,E_i^n)$ ($E_i^j$ are scalar differential operators). Then the multi-bracket equals
(in this formula $m=n+1$)
 $$
\{E_{1},\dots,E_{m+1}\}=
\!\dfrac{1}{m!}\hspace{-0.08in}
\sum_{\alpha\in\mathbf{S}_{m},\beta\in\mathbf{S}_{m+1}}\hspace{-0.08in}(-1)^\alpha(-1)^\beta\, E^{\alpha(0)}_{\beta(1)}\!\cdot\!E^{\alpha(1)}_{\beta(2)}\cdots E^{\alpha(m-1)}_{\beta(m)}\!\cdot E_{\beta(m+1)}.
 $$
Reduction modulo the system in (\ref{multbr}) means the following. Orders of differential operators $E_i$ are 1 and order of the multi-bracket $\{E_1,\dots,E_{n+2}\}$ is (no greater than) $(n+1)$. We prolong the system $\E=\{E_1=0,\dots,E_{n+2}=0\}$ to the order $(n+1)$ (such prolongation exists!), i.e. take the space of all linear combinations $\sum_{i=1}^{n+2}\nabla_i E_i$ with linear scalar differential operators $\nabla_i$ of $\op{ord}\nabla_i\le n$, and consider the class of the multi-bracket in the quotient space (see \cite{KL$_2$} for details).

Now the system $\E$ is of finite type, i.e. has no complex
characteristics (we refer the reader for this and further notions
from geometry of PDEs to \cite{KLV,KL$_4$}), so if compatibility
condition (\ref{multbr}) is satisfied, the system is locally
integrable.

If this condition is not satisfied we add the equation $E_{n+3}=0$
to the system $\E$ and continue with solvability investigation of
the new prolonged system $\E'$. Prolongation means addition of
derivatives of the generators. But if their combination
(differential corollary) drops in order (projection), it is the
compatibility condition, which should be added to the system. Thus
we get new equations $\E''$ etc, until we stabilize at a system
$\bar\E$ in a finite number of steps (Cartan-Kuranishi theorem).

Denoting $T=T_{(x,y)}M$ the model tangent space for independent variables and  $N\simeq\R^{n+2}(u_{i(n-i)})$ the space of dependent variables, the system $\E$ has symbols $g_k\subset S^kT^*\ot N$ (respectively $\E'$ has symbols $g'_k$ etc). Codimension of this linear subspace $(n+2)(k+1)-\dim g_k$ equals the number of independent equations in the prolonged system of order $k$.

Cohomology $H^{k-1,2}(\E)$ of the Spencer $\d$-complex
 $$
0\to g_{k+1}\stackrel{\d}\longrightarrow g_k\ot T^*\stackrel{\d}\longrightarrow g_{k-1}\ot\La^2T^*\to0
 $$
at the last term is the space of compatibility conditions (\cite{KLV,KL$_4$}). For the initial system $\E$ the only non-zero second Spencer $\d$-cohomology is $H^{n,2}(\E)\simeq\R^1$ and the only compatibility condition is the above reduced multi-bracket $E_{n+3}$. Thus $\E'=\{E_1=0,\dots,E_{n+3}=0\}$. Further compatibility conditions (for $\E'$ etc) will be indicated along with prolongation-projection process.

Let $\Jj_n=\{(u_{i(n-i)})\}$  denotes the solution space of the system $\E$. This space is linear and $\dim\Jj_n=\sum\dim\bar g_k$. In particular this is smaller than $\sum\dim g_k$
and since the latter quantity strictly decreases during prolongation-projection, the
method stops in a finite number of steps giving either non-trivial locally integrable system of PDEs (solvability) or no solution result.

\section{\hps Differential invariants of a Riemannian metric}\label{S2}

 \abz
Here we describe the algebra $\mathfrak{A}$ of scalar differential invariants of a
Riemannian metric $g$ on a two-dimensional surface $M$. It is well-known that the first such invariant occurs in order 2 and is given by the scalar curvature $K$.

Denote $\op{grad}K$ be the $g$-gradient of the curvature and let
$\op{sgrad}K=J_0\op{grad}K$ be its rotation by $\pi/2$ (one needs to fix orientation, which is possible as we treat $(M,g)$ locally; alternatively we can square those invariants, which have undetermined sign).
There are two invariant differentiations $\mathfrak{L}_{\op{grad}K}$ and $\mathfrak{L}_{\op{sgrad}K}$ ($\mathfrak{L}$ is the Lie derivative). These differentiations and $K$ do not generate $\mathfrak{A}$, but if we also allow commutators they do.

There are precisely $(k-1-\delta_{k3})$ functionally independent differential invariants of order $k$ for $k>0$. Let us briefly explain why (cf. \cite{T}). Consider the jet-space of Riemannian metrics $J^k(S^2_+T^*M)$. Fibers of the projections $\pi_{k,k-1}:J^k\to J^{k-1}$ have dimensions $3(k+1)$, where as usual $J^{-1}=M$.

The pseudogroup $\op{Diff}_\text{loc}(M)$ acts naturally on the jet-spaces. Its action is transitive up to 1st jets. Indeed, the action is clearly transitive on the base and let us consider the jets of the stabilizer of $x\in M$, i.e. the differential group $G_x^k=J^k_{x,x}(M,M)$. $G_x^1$ acts transitively on $J^0_x(S^2_+T^*M)$ with one-dimensional stabilizer $O(2)$.

The action of $G^2_x$ on $J^1_x(S^2_+T^*M)$ is transitive as well, but the action of $G^3_x$ on $J^2_x(S^2_+T^*M)$ is not (though it has dimensional freedom to be!). Codimension of a generic orbit is 1 and curvature $K$ is the only invariant.

The stabilizer disappears only on the next step and starting from $k=3$ the actions of $G^{k+1}_x$ on $J^k_x(S^2_+T^*M)$ are free. Thus since $\dim\op{Ker}(G^{k+1}_x\to G^k_x)=2(k+2)$, codimension of the generic orbit of $\op{Ker}(G^{k+1}_x\to G^k_x)$ on $\pi_{k,k-1}^{-1}(*)$ is equal to $(k-1-\delta_{k3})$.

Let us establish a basis in the space of invariants.

For a tensor $T$ denote $d_\nabla^{\ot k}T=d_\nabla(d_\nabla^{\ot(k-1)}T)$
the iterated covariant derivative of the tensor $T$
($d_\nabla^{\ot2}$ differs from $d_\nabla^2$, which is equal to
multiplication by the curvature tensor). In particular, we obtain
the forms $d_\nabla^{\ot i}K\in C^\infty(\ot^iT^*M)$.

Since $K$ is a scalar function, $d_\nabla K=dK$. The next differential is symmetric, because we consider metric (Levi-Civita) connection:

 \begin{lem}
$d^{\ot2}_\nabla F\in C^\infty(S^2T^*M)$ $\forall F\in C^\infty(M)$ iff connection $\nabla$ is symmetric.
 \end{lem}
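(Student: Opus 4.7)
The plan is to unwind the definition of the iterated covariant derivative on a scalar function and read off the antisymmetric part directly in terms of the torsion of $\nabla$.

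First I would note that on a scalar function the first covariant differential is just the ordinary differential: $d_\nabla F=dF$, since the connection acts trivially on $0$-forms. Therefore $d^{\otimes 2}_\nabla F=\nabla(dF)$, a $(0,2)$-tensor whose value on vector fields $X,Y$ is computed by the standard formula
\[
(\nabla_X\, dF)(Y)=X(dF(Y))-dF(\nabla_X Y)=X(YF)-(\nabla_X Y)F.
\]

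Next I would antisymmetrize. Subtracting the analogous expression with $X,Y$ swapped gives
\[
(d^{\otimes 2}_\nabla F)(X,Y)-(d^{\otimes 2}_\nabla F)(Y,X)
=[X,Y]F-(\nabla_X Y-\nabla_Y X)F=-T(X,Y)\,F,
\]
where $T(X,Y)=\nabla_XY-\nabla_YX-[X,Y]$ is the torsion tensor of $\nabla$. Thus the skew part of $d^{\otimes 2}_\nabla F$ equals $-dF\circ T$.

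From this identity both implications are immediate: if $\nabla$ is symmetric ($T=0$), then the skew part vanishes for every $F$, proving $d^{\otimes 2}_\nabla F\in C^\infty(S^2T^*M)$; conversely, if $d^{\otimes 2}_\nabla F$ is symmetric for every $F\in C^\infty(M)$, then $dF(T(X,Y))=0$ for all $F$ and all $X,Y$, and choosing $F$ locally with arbitrary gradient forces $T(X,Y)=0$, i.e.\ $\nabla$ is symmetric.

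There is no real obstacle here; the only point requiring a moment of care is the final step, where one must invoke the existence of locally defined functions with prescribed differential at a point in order to conclude $T=0$ pointwise from $dF\circ T\equiv 0$ for all $F$.
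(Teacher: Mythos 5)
Your proposal is correct and follows essentially the same route as the paper: both compute $(\nabla_X dF)(Y)=X(YF)-(\nabla_XY)F$, antisymmetrize to identify the skew part of $d^{\otimes2}_\nabla F$ with $dF$ evaluated on the torsion, and conclude. Your explicit remark about choosing $F$ with prescribed differential to get the converse is a small extra care the paper leaves implicit, but the argument is the same.
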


 \begin{proof}
Since $d^{\ot2}_\nabla F(\xi,\eta)=(\nabla_\xi dF)(\eta)=\nabla_\xi[\eta(F)]-[\nabla_\xi\eta](F)$, we have:
 $$
d^{\ot2}_\nabla F(\xi,\eta)-d^{\ot2}_\nabla F(\eta,\xi)=
\bigl(L_{[\xi,\eta]}-(\nabla_\xi\eta-\nabla_\eta\xi)\bigr)(F)=T_\nabla(\eta,\xi)(F),
 $$
where $T_\nabla$ is the torsion tensor. In coordinates this is expressed via Christoffel symbols as $(d^{\ot2}_\nabla F)_{ij}=F_{ij}-\Gamma^l_{ij}F_l$, where $F_\z=\tfrac{\p^{|\z|}F}{\p x^\z}$ are the partial derivatives.
 \end{proof}

The next differential $d^{\ot3}_\nabla F\in\Omega^1M\ot S^2\Omega^1M$, but this (and higher) tensors are fully symmetric iff the metric is flat:

 \begin{lem}
Let $T_\nabla=0$. Then $d^{\ot3}_\nabla F\in C^\infty(S^3T^*M)$ $\forall F\in C^\infty(M)$ iff $R_\nabla=0$.
 \end{lem}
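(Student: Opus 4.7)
The plan is to reduce full symmetry to a single transposition by invoking the preceding lemma, and then to identify the obstruction as the Riemann curvature via the Ricci identity. Since $T_\nabla=0$, that lemma gives $d^{\ot2}_\nabla F\in C^\infty(S^2T^*M)$. Writing $d^{\ot3}_\nabla F=d_\nabla(d^{\ot2}_\nabla F)$, the resulting tensor is automatically symmetric in its two inner indices, so in coordinates $(d^{\ot3}_\nabla F)_{ijk}=\nabla_i\nabla_j\nabla_k F$ is symmetric in $j,k$. Hence $d^{\ot3}_\nabla F\in C^\infty(S^3T^*M)$ is equivalent to the additional symmetry $\nabla_i\nabla_j\nabla_k F=\nabla_j\nabla_i\nabla_k F$.

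Next I apply the Ricci identity to the 1-form $dF$. With $T_\nabla=0$, for any 1-form $\omega$ one has
$$
\nabla_i\nabla_j\omega_k-\nabla_j\nabla_i\omega_k=-R^l_{kij}\,\omega_l.
$$
Specialising to $\omega=dF$ yields
$$
\nabla_i\nabla_j\nabla_k F-\nabla_j\nabla_i\nabla_k F=-R^l_{kij}\,\nabla_l F,
$$
which pinpoints the curvature as the exact obstruction to the desired extra symmetry.

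Sufficiency is then immediate: if $R_\nabla=0$, the right-hand side vanishes for every $F$, so $d^{\ot3}_\nabla F$ is symmetric in its outer pair of indices and, combined with the inner symmetry, is therefore fully symmetric. For the converse, assume $d^{\ot3}_\nabla F\in C^\infty(S^3T^*M)$ for every $F$; then $R^l_{kij}\nabla_l F\equiv 0$. At any point $p\in M$ the covector $dF|_p$ can be prescribed arbitrarily (take $F$ affine in local coordinates centred at $p$), so $R^l_{kij}=0$ pointwise, i.e.\ $R_\nabla=0$.

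I do not anticipate a serious obstacle here: the only point requiring care is to recognise that the inner symmetry is \emph{already} supplied by the previous lemma, so just one additional transposition has to be tested, which reduces the whole statement to a direct application of the Ricci identity to $dF$.
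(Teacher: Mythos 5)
Your proof is correct and follows essentially the same route as the paper: the paper likewise notes that $d^{\ot3}_\nabla F$ lands in $\Omega^1M\ot S^2\Omega^1M$ and computes the antisymmetrization in the outer pair of arguments as $\bigl(([\nabla_\xi,\nabla_\eta]-\nabla_{[\xi,\eta]})dF\bigr)(\theta)=R_\nabla(\xi,\eta)^*dF(\theta)$, which is exactly your coordinate Ricci identity. You are merely more explicit about the two points the paper leaves implicit, namely that one transposition suffices and that $dF|_p$ can be prescribed arbitrarily for the converse.
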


 \begin{proof}
$d^{\ot3}_\nabla F(\xi,\eta,\theta)=(\nabla_\xi\nabla_\eta dF-\nabla_{\nabla_\xi\eta} dF)(\theta)$, whence
 $$
d^{\ot3}_\nabla F(\xi,\eta,\theta)-d^{\ot3}_\nabla F(\eta,\xi,\theta)= \bigl(([\nabla_\xi,\nabla_\eta]-\nabla_{[\xi,\eta]})dF\bigr)(\theta)=
R_\nabla(\xi,\eta)^*dF(\theta)
 $$
and the result follows.
 \end{proof}

Now to fix a basis in invariants of order $i=2+l$ we consider the form
$d_\nabla^{\ot l}K$ and denote (in non-flat case the order is essential!)
 $$
I_{ij}=d_\nabla^{\ot l}K(\underbrace{\op{grad}K,\dots,\op{grad}K}_{l-j},
\underbrace{\op{sgrad}K,\dots,\op{sgrad}K}_{j}).
 $$
If we change the order, the expression will be changed by a lower order differential invariant. We will not use it and so omit the details, but for instance
 $$
d_\nabla^{\ot3}K(\op{sgrad}K,\op{sgrad}K,\op{grad}K)-
d_\nabla^{\ot3}K(\op{grad}K,\op{sgrad}K,\op{sgrad}K)=|\op{grad}K|^2.
 $$

The first invariants are: $I_2=K$ and $I_3=|\nabla K|^2$ (the
index refers to the order of differential invariant). Starting from $i=4$ there are $l+1=i-1$ invariants $I_{ij}$ and we re-enumerate the index $j$ by letters (so we write $I_{4a}$ instead of $I_{40}$, $I_{5d}$ instead of $I_{53}$ etc). For instance $I_{4b}=d_\nabla^{\ot2}K(\op{grad}K,\op{sgrad}K)$.

The two approaches to describe the algebra $\mathfrak{A}$ of differential invariants, one via the basic invariant $I_2$ with two invariant differentiations and another one via the basis $I_{ij}$ are closely related: the former is obtained from the latter via the Lie-Tresse approach \cite{Tr}. Namely let say $I_2,I_3$ be chosen as a basis, and $\hat\p/\hat\p_{I_2},\hat\p/\hat\p_{I_3}$ be the corresponding Tresse derivatives (see \cite{KL$_1$}). In local coordinates $(x^1,x^2)$ they can be expressed as
 $$
\hat\p/\hat\p_{I_2}=\Delta^{-1}\bigl(\D_2(I_3)\D_1-\D_1(I_3)\D_2\bigr),\quad \hat\p/\hat\p_{I_3}=\Delta^{-1}\bigl(\D_1(I_2)\D_2-\D_2(I_2)\D_1\bigr),
 $$
where $\D_i$ are total derivatives \cite{KLV} and $\Delta=\D_1(I_2)\D_2(I_3)-\D_2(I_2)\D_1(I_3)$ is the determinant (basis requirement above means $\Delta\not\equiv0$). Then the two invariant differentiations $\nabla_1=\mathfrak{L}_{\op{grad}K}$ and $\nabla_2=\mathfrak{L}_{\op{sgrad}K}$ equal
 $$
\nabla_1=I_3\cdot\hat\p/\hat\p_{I_2}+2I_{4a}\cdot\hat\p/\hat\p_{I_3},\quad \nabla_2=2I_{4b}\cdot\hat\p/\hat\p_{I_3}.
 $$

Relation to the other side constitutes an infinite sequence of identities:
 \begin{gather*}
\nabla_1\,I_2=I_3,\ \nabla_2\,I_2=0,\quad
\nabla_1\,I_3=2I_{4a},\ \nabla_2\,I_3=2I_{4b},\\
\nabla_1\,I_{4a}=I_{5a}+\frac{2(I_{4a}^2+I_{4b}^2)}{I_3},\
\nabla_2\,I_{4a}=I_{5b}+\frac{2I_{4b}(I_{4a}+I_{4c})}{I_3},\\
\nabla_1\,I_{4b}=I_{5b}+\frac{I_{4b}(I_{4a}+I_{4c})}{I_3},\
\nabla_2\,I_{4b}=I_{5c}+\frac{I_{4c}^2-I_{4a}I_{4c}+2I_{4b}^2}{I_3}+I_2I_3^2,\\
\nabla_1\,I_{4c}=I_{5c}+\frac{2(I_{4a}I_{4c}-I_{4b}^2)}{I_3},\
\nabla_2\,I_{4c}=I_{5d},\qquad\dots
 \end{gather*}
They can be obtained successively with the help of the commutation rule for invariant differentiations:
 $$
[\op{grad}K,\op{sgrad}K]=
-\frac{2I_{4b}}{I_3}\op{grad}K+\frac{I_{4a}-I_{4c}}{I_3}\op{sgrad}K.
 $$

\section{\hps Linear integrals}\label{S3}

 \abz
A Riemannian metric $g$ on a surface $M^2$ possesses a Killing
vector field iff it has the following local form near the point,
where the field does not vanish:
$ds^2=g_{11}(x)dx^2+2g_{12}(x)dxdy+g_{22}(x)dy^2$, so that $(M^2,g)$ is a
surface of revolution. How to recognize such a metric?

Let us write the metric locally in isothermal hyperbolic coordinates (possibly over $\C$):
$ds^2=e^{\l(x,y)}dxdy$. If the metric is positive definite (not pseudo-Riemannian), one should rather write $e^{\l}dzd\bar{z}$ and this
complexification pop-ups as follows: while the gradient of a function $K$
equals $(2e^{-\l}K_y,2e^{-\l}K_x)$, the skew-gradient is
$(2\mathrm{i}\,e^{-\l}K_y,-2\mathrm{i}\,e^{-\l}K_x)$! Moreover we shall encounter $\mathrm{i}$ as a factor at some coefficients below, but this does not lead to contradiction: vanishing of these coefficients turns out to be a real condition.

In \cite{KL$_3$} we chose the general form, but since the answer will be expressed in differential invariants, the choice is not essential.

Function $F_1=u p_x+v p_y$ is an integral of the geodesic flow iff
the following 3 linear PDEs (coefficients of $\{H,F_1\}$) are satisfied:
 $$
u_y=0,\qquad u_x+v_y+u\l_x+v\l_y=0,\qquad v_x=0.
 $$
Denote them by $E_1,E_2,E_3$ respectively. This system $\E$ has symbols:
$\dim g_0=2$, $\dim g_1=1$, $\dim g_2=0$. The compatibility
condition is given by the relation
 $$
E_4=\{E_1,E_2,E_3\}\ (\op{mod} E_1,E_2,E_3)=0.
 $$
In general case the bracket should have order 2 in pure form and 1 after reduction,
but in our case $E_4$ is of order 0 and equals:
 $$
E_4=\tfrac12e^\l(K_xu+K_yv),
 $$
where $K$ is the Gaussian curvature. Thus compatibility condition means
$(M^2,g)$ is a spatial form: $K=\op{const}$. This is the case, when $\dim\Jj_1=3$.

If $K$ is non-constant, to study solvability we add the equation
$E_4=0$ to the system. To describe the new system $\E'$ we let
$u=K_y w$, $v=-K_x w$ and obtain the following system on one function $w(x,y)$:
 $$
\begin{pmatrix}
0 & K_y & K_{yy} \\ -K_x & 0 & -K_{xx} \\ K_y & -K_x & \l_xK_y-\l_yK_x
\end{pmatrix}\cdot
\begin{bmatrix}
w_x \\ w_y \\ w
\end{bmatrix}=0.
 $$

In order to have solutions the determinant of this matrix should vanish. It equals $-\frac{\mathrm{i}}4e^{2\l}I_{4b}$. Given this condition we can drop one equation and transform the system to the form
 $$
(\log|K_x\,w|)_x=0,\qquad (\log|K_y\,w|)_y=0.
 $$
Its solvability is equivalent to a 3rd order relation on the curvature, which can be expressed as the condition $I_3(I_{5b}+I_{5d})=2I_{4b}(I_{4a}+I_{4c})$. However when $I_{4b}=0$, then $I_{5b}=0$ and we obtain:

 \begin{theorem}\label{KVF}
$\dim\Jj_1=3$ iff $K=\op{const}$ (i.e. $I_3=0$) and $\dim\Jj_1=1$ iff
 $$
I_{4b}=0,\ I_{5d}=0.
 $$
Otherwise there exist no local Killing vector fields.
 \end{theorem}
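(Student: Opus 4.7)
The plan is to execute the prolongation-projection procedure of Section~\ref{S1} on the linear system $\E=\{E_1,E_2,E_3\}$ for a Killing vector field and to translate each successive solvability condition into an identity among the invariants of Section~\ref{S2}. The compatibility $E_4=\tfrac12 e^\l(K_x u+K_y v)=0$ is already computed in the excerpt. In the case $K=\op{const}$ (equivalently $I_3=0$) this $E_4$ vanishes identically, so $\E$ is formally integrable; its symbol dimensions $\dim g_0=2$, $\dim g_1=1$, $g_k=0$ for $k\ge 2$ yield $\dim\Jj_1=3$, which recovers the space-form isometry algebra.

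Assume henceforth that $K$ is non-constant, so $(K_x,K_y)\ne 0$. Then $E_4=0$ forces the pointwise parametrization $u=K_y w$, $v=-K_x w$ for a single unknown function $w(x,y)$, and substituting into $E_1,E_2,E_3$ yields the $3\times 3$ homogeneous linear system on $(w_x,w_y,w)$ displayed above. Existence of a non-trivial solution is equivalent to the vanishing of its determinant; a direct computation identifies this determinant, up to the nonvanishing conformal factor $-\tfrac{\imag}4 e^{2\l}$, with $I_{4b}$. Hence $I_{4b}=0$ is necessary.

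Next, assuming $I_{4b}\equiv 0$ and $I_3\ne 0$, the matrix has rank at most $2$, so one of the three equations is a consequence of the other two; the remaining pair can be rewritten as $(\log|K_x w|)_x=0$ and $(\log|K_y w|)_y=0$. Each determines $w$ up to a factor depending on the complementary variable, and their mutual consistency is equivalent to a single scalar third-order PDE on the metric. Rewriting partial derivatives in the hyperbolic isothermal chart in terms of $\op{grad}K$, $\op{sgrad}K$ and the iterated covariant derivatives $d_\n^{\ot l}K$ should convert this PDE into the invariant identity $I_3(I_{5b}+I_{5d})=2I_{4b}(I_{4a}+I_{4c})$.

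Finally, I invoke the differentiation identity $\n_1 I_{4b}=I_{5b}+I_{4b}(I_{4a}+I_{4c})/I_3$ from Section~\ref{S2}: since $I_{4b}\equiv 0$ forces $\n_1 I_{4b}\equiv 0$, we obtain $I_{5b}=0$ automatically, and the compatibility collapses to $I_3 I_{5d}=0$, hence to $I_{5d}=0$. The converse direction is immediate: once $I_{4b}=I_{5d}=0$, the two scalar equations in $w$ are consistent and produce a solution unique up to scale, giving $\dim\Jj_1=1$. The main technical obstacle is step three, the translation of the compatibility between the two logarithmic equations into the clean invariant expression $I_3(I_{5b}+I_{5d})-2I_{4b}(I_{4a}+I_{4c})$; this requires expressing the coordinate components of $\n K$, $d_\n^{\ot 2}K$ and $d_\n^{\ot 3}K$ in the isothermal chart in terms of the basis $\{I_{ij}\}$, and is best organized using the commutation rule for $\op{grad}K$ and $\op{sgrad}K$ recorded at the end of Section~\ref{S2}.
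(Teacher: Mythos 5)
Your proposal is correct and follows essentially the same route as the paper: the same compatibility condition $E_4$, the same substitution $u=K_yw$, $v=-K_xw$ leading to the $3\times3$ determinant $-\tfrac{\imag}{4}e^{2\l}I_{4b}$, the same pair of logarithmic equations whose consistency gives $I_3(I_{5b}+I_{5d})=2I_{4b}(I_{4a}+I_{4c})$, and the same reduction to $I_{5d}=0$ via $I_{5b}=0$. Your explicit use of the identity $\n_1 I_{4b}=I_{5b}+I_{4b}(I_{4a}+I_{4c})/I_3$ to justify the implication $I_{4b}=0\Rightarrow I_{5b}=0$ merely makes precise a step the paper leaves as a parenthetical reference to the identities of Section~\ref{S2}.
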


 \begin{rk}
This and further statements hold only near regular points (here this
means $dK\ne0$). Indeed in non-analytic case there exist
pathological counterexamples. For instance for any $\ve>0$ it is
possible to construct a $C^\infty$-metric on the disk $D^2(1)$
satisfying $I_{4b}=I_{5d}=0$, such that the set of regular points
(where a Killing field exist) has Lebesgue measure $<\ve$.
 \end{rk}

We can reformulate this criterion as vanishing of the differential invariants $\op{Jac}_g(K,|\nabla K|^2)$ and $\op{Jac}_g(K,\Delta_gK)$, where $\op{Jac}_g(F,G)=dF\wedge dG\bigl(\frac{\op{grad}K}{|\nabla K|},\frac{\op{sgrad}K}{|\nabla K|}\bigr)$ is the Jacobian and $\Delta_g F=\op{Tr}_g[d_\nabla^{\ot2}F]$ is the Laplacian. Indeed we have: $\Delta_gK=(I_{4a}+I_{4c})/I_3$, so the claim follows from:
 $$
\op{Jac}_g(K,|\nabla K|^2)=2I_{4b},\qquad \op{Jac}_g(K,\Delta_gK)=\frac{I_{5b}+I_{5d}}{I_3}
 $$
(note that $I_{4b}=0$ implies $I_{5b}=0$).

 \begin{rk}
Some classical criteria for existence of local (global implications
follow) Killing fields are contained in \cite{Nij,Nom}, but they are
neither explicit conditions on the metric $g$ nor finitely
formulated. Our criterion in the form of dependence of $|\nabla K|$
and $\Delta_gK$ on $K$ is implicitly contained in \cite{D}.
 \end{rk}


\section{\hps More than 3 quadratic integrals}\label{S4}

 \abz
We turn now to characterization of Liouville metrics. We will again use isometric hyperbolic coordinates, $H=e^{-\l}p_xp_y$, which does not restrict generality.

The function $F_2=u(x,y)p_x^2+2v(x,y)p_xp_y+w(x,y)p_y^2$ is a quadratic
integral of the geodesic flow iff the following system $\E$ is satisfied:
 $$
u_y=0,\ \ u_x+2v_y+2u\l_x+2v\l_y=0,\ \ 2v_x+w_y+2v\l_x+2w\l_y=0,\ \ w_x=0.
 $$
Denote the equations respectively by $E_1,E_2,E_3,E_4$. The
compatibility condition can be expressed via the multi-bracket
 $$
E_5'=\{E_1,E_2,E_3,E_4\}\ (\op{mod} E_1,E_2,E_3,E_4)=0.
 $$
Even though it might be expected from the general theory that
$E_5'$ has order 2, in our case it has order 1. Divided by $2e^{\l}$ it
equals to
 $$
E_5=5K_x v_y-5K_y v_x-(K_{xx}-\l_xK_x)u+5(\l_yK_x-\l_xK_y)v+(K_{yy}-\l_yK_y)w.
 $$

Thus the system $\E$ is formally integrable iff $K=\op{const}$. In
this case $\dim g_0=3,\dim g_1=2,\dim g_2=1$, $g_{2+i}=0$ for $i>0$ and
the dimension of the solutions space is $\dim\Jj_2=\sum\dim g_k=6$.
Indeed $\Jj_2=S^2\Jj_1$, i.e. a basis in the space of quadratic integrals is formed by
pair-wise products of elements of a basis in is the space of linear integrals.

Suppose that $K\ne\op{const}$, so that $E_5$ is a differential relation of the
first order in $u,v,w$. Adding $E_5=0$ we get the system\footnote{$J^k(m,r)$ is the space of $k$-jets of maps
$\vp:\R^m\to\R^r$ and formal codimension of a system $\E\subset J^k(m,r)$ is $\sum_i\op{dim}H^{1,i}(\E)$, the precise number of the equations in the system \cite{KL$_4$}.}
$\E'\subset J^1(2,3)$ of formal codimension 5.

Its symbols $g'_i\subset S^iT^*\ot\R^3$ have $\dim g_0'=3$, $\dim
g_1'=1$, $\dim g_2'=0$ and thus the only non-zero second
$\d$-cohomology groups\footnote{The second Spencer $\d$-cohomology $H^{*,2}=\oplus H^{i-1,2}$ is the
space that contains all compatibility conditions of the system. The latter are called Weyl tensors $W_i\in H^{i-1,2}$ (also called curvatures/torsions/structural functions). We refer to \cite{KL$_4$} for a review.}
are $H^{0,2}(\E')\simeq\R^1$, $H^{1,2}(\E')\simeq
\R^1$. There are two obstructions to compatibility -- Weyl tensors
$W_1'\in H^{0,2}(\E')$ and $W_2'\in H^{1,2}(\E')$. The former $W_1'$ is proportional to
  $$
\! E_6'=K_yE_{5x}+K_xE_{5y}-\frac52K_x^2(E_{2y}-E_{1x})
+\frac52K_y^2(E_{3x}-E_{4y})(\op{mod} E_1,E_2,E_3,E_4,E_5).
 $$
Multiplying this by $5K_x$ and further simplifying modulo
$E_1,E_2,E_3,E_4,E_5$ we obtain the following expression:
 $$
E_6= \tfrac{35}{4\mathrm{i}}e^{2\l}\,I_{4b}\,v_x+Q_1\,u+
\tfrac{35}{4\mathrm{i}}e^{2\l}\,\l_xI_{4b}\,v+Q_2\,w,
 $$
where
 $$
-128\,e^{-4\l}K_y^3Q_1=J_{5a}\ \text{ and }\
32\,e^{-3\l}I_3K_xQ_2=J_{5b}
 $$
are differential invariants and provided $I_{4b}=0$ (which implies $I_{5b}=0$, see identities in \S\ref{S2}) they reduce to $J_{5a}|_{I_{4b}=0}=J_{5b}|_{I_{4b}=0}=J_5$, where
 $$
J_5=5I_3(I_{5a}-I_{5c})+(I_{4a}-I_{4c})(I_{4c}-6I_{4a})-25I_2I_3^3
 $$

We see that the coefficients of $E_6$ (as well as that of other
$E_i$) are not invariant (neither are real), but the condition of
their vanishing is invariant (and real).

If $E_6$ vanishes, the system $\E'$ can be prolonged to the second jets, but is not yet formally integrable. Another curvature -- Weyl tensor $W_2'$ -- is the obstruction to prolongation to the third and henceforth infinite jets. Since $g_2'=0$, it is the Frobenius condition of the canonical Cartan distribution on the first prolongation $\E'_2$ of $\E'_1=\E'$ (but it is one equation, not three as one can guess without calculation of Spencer $\d$-cohomology!). Originally a (linear) function on $\E_2'$, it can be represented as a linear function on $\E'$ due to isomorphism $\pi_{2,1}':\E'_2\stackrel\sim\to\E'$.

This new equation $\ti E_6$ has coefficients of order 6, but they can be simplified modulo the conditions $I_{4b}=0$, $J_5=0$. Indeed we can differentiate these along invariant fields $\nabla_1$, $\nabla_2$, see for instance the next section (this allows to express all the higher invariants $I_{ij}$ with $i\ge5$ through invariants of order $\le4$). Thus the second obstruction to formal integrability $W_2'$ is the following equation, which turns out to be a linear function on $J^0(2,3)$ (we multiply it by the factor $64e^{-3\l}I_3^3K_xK_y$):
 $$
\tilde E_6=J_4\cdot(K_x^2u-K_y^2v),
 $$
where
 $$
J_4=3(I_{4a}-I_{4c})(I_{4a}+4I_{4c})I_{4c}-15I_2I_3^3(I_{4a}+4I_{4c})+25I_3^5.
 $$
Notice that in expression for $E_6$ we simplified modulo the conditions $I_{4b}=0,J_5=0$. Otherwise the coefficients are complex and more complicated, and in addition there are terms with $v_x$ and $v$. For instance the coefficient of $v_x$ term is $\frac{35}{8\mathrm{i}}e^{2\l}(I_{5b}+I_{5d})I_3^{-1}$, but it simplifies to zero.

Since formal (=local due to finite type condition) integrability of $\E'$ means existence of 4 integrals of the geodesic flow, we get the following statement:

 \begin{theorem}\label{thdim4}
The condition of exactly 4 quadratic integrals $\dim\Jj_2=4$ can be expressed as
3 differential conditions on the metric:
 $$
I_{4b}=0,\quad J_5=0,\quad J_4=0.
 $$
 \end{theorem}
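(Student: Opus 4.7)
The plan is to execute the prolongation-projection scheme of Section \ref{S1} for the four-equation system $\E = \{E_1,E_2,E_3,E_4\}$ governing a quadratic integral $F_2$, tracking precisely when each Weyl-tensor obstruction can be forced to vanish in terms of the differential invariants of Section \ref{S2}. First, I would dispose of the case $K = \op{const}$, which gives $\dim\Jj_2 = 6$, so for $\dim\Jj_2 = 4$ one may assume $I_3 \ne 0$. Adjoining the first compatibility $E_5 = 0$ yields the system $\E' \subset J^1(2,3)$ with symbols $\dim g_0' = 3$, $\dim g_1' = 1$, and $g_k' = 0$ for $k \ge 2$. Because the symbol terminates at level 1, formal integrability of $\E'$ automatically gives $\dim\Jj_2 = \sum\dim g_k' = 4$, matching the target, so the task reduces to characterizing when both Weyl tensors $W_1' \in H^{0,2}(\E')$ and $W_2' \in H^{1,2}(\E')$ vanish.

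Next, I would treat $W_1'$. Reducing $E_6'$ modulo $\{E_1,\dots,E_5\}$ brings it to the first-order form $E_6$ displayed in the excerpt. Viewed as a linear expression in $(u,v,w,v_x)$, its coefficients are: a nonzero multiple of $I_{4b}$ in front of both $v_x$ and $v$, the scalar invariant (up to nonzero factor) $J_{5a}$ in front of $u$, and likewise $J_{5b}$ in front of $w$. Identical vanishing is therefore equivalent to $I_{4b} = 0$ together with $J_{5a} = J_{5b} = 0$. Using the identities from Section \ref{S2}, in particular the fact that $I_{4b} = 0$ forces $I_{5b} = 0$, both $J_{5a}$ and $J_{5b}$ collapse to the single invariant $J_5$, yielding the first two conditions $I_{4b} = 0$ and $J_5 = 0$.

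Assuming these two, I would prolong $\E'$ once more and extract $W_2'$. Since $g_2' = 0$, the projection $\pi_{2,1}' : \E'_2 \to \E'$ is an isomorphism, so $W_2'$, although a priori living on $\E'_2$, descends to a single linear equation $\tilde E_6$ on $J^0(2,3)$. Performing the prolongation and systematically reducing modulo $\E'$, modulo $I_{4b} = J_5 = 0$, and modulo the Tresse-type recursions of Section \ref{S2}, one arrives at $\tilde E_6 = J_4 \cdot (K_x^2 u - K_y^2 v)$. Since $K$ is non-constant on the regular open set, the linear form $K_x^2 u - K_y^2 v$ is non-trivial, hence $\tilde E_6 \equiv 0$ precisely when $J_4 = 0$. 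Once all three conditions hold, $\E'$ is formally integrable, hence locally integrable by finite type, and its solution space has dimension $4$, as claimed.

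The main technical obstacle will be this last step: $\tilde E_6$ initially contains derivatives of $\l$ up to order $6$ and is entangled with terms in $v_x$ and $v$, whose coefficients cancel (as the excerpt notes) because $(I_{5b}+I_{5d})I_3^{-1}$ vanishes under the first two conditions. The clean factorization $J_4 \cdot (K_x^2 u - K_y^2 v)$ emerges only after eliminating every invariant $I_{ij}$ of order $\ge 5$ through the recursions for $\nabla_1 I_{ij}, \nabla_2 I_{ij}$ and quotienting by the ideals generated by $I_{4b}$ and $J_5$. The analogous but lighter calculation showing that $J_{5a}$ and $J_{5b}$ collapse to the common $J_5$ under $I_{4b} = 0$ serves as a dry run for this more delicate reduction.
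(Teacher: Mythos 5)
Your proposal is correct and follows essentially the same route as the paper: dismiss $K=\op{const}$, adjoin $E_5$, observe the symbol dimensions $3,1,0$ force $\dim\Jj_2\le4$ with equality iff $\E'$ is formally integrable, then identify the vanishing of $W_1'$ with $I_{4b}=0$, $J_5=0$ (via the collapse of $J_{5a},J_{5b}$ to $J_5$) and the vanishing of $W_2'$, pushed down through $\pi_{2,1}'$ to the order-zero equation $\tilde E_6$, with $J_4=0$. Nothing essential is missing or different.
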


\section{\hps Digression I: Darboux-K{\oe}ning's theorem}\label{S5}

 \abz
We can deduce now the classical theorem due to Darboux and K{\oe}nning:

 \begin{theorem}
A plane metric has exactly 4 quadratic integrals iff it has
exactly one linear integral and one more quadratic integral
independent of the Hamiltonian and the square of the linear
integral.
 \end{theorem}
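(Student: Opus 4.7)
The plan is to derive both directions from the invariant characterizations established in Theorems~\ref{KVF} and~\ref{thdim4}, combined with an analysis of the $\mathfrak{L}_X$-action on $\Jj_2$ when a Killing field is present. Recall the possible values $\dim\Jj_2\in\{1,2,3,4,6\}$ from the prolongation-projection scheme.

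For the forward implication, assume $\dim\Jj_2=4$. Theorem~\ref{thdim4} gives $I_{4b}\equiv0$, $J_5\equiv0$, $J_4\equiv0$, and by Theorem~\ref{KVF} it suffices to deduce $I_{5d}=0$. Applying $\nabla_1$ to $I_{4b}\equiv0$ and using the identity $\nabla_1I_{4b}=I_{5b}+I_{4b}(I_{4a}+I_{4c})/I_3$ of \S\ref{S2} yields $I_{5b}=0$; consequently $\nabla_2I_2=\nabla_2I_3=\nabla_2I_{4a}=0$, so differentiating $J_4\equiv0$ along $\nabla_2$ (with only $\nabla_2I_{4c}=I_{5d}$ surviving) produces
\[
0=\nabla_2J_4 \;=\; 3\,I_{5d}\,\bigl[\,I_{4a}^2+6I_{4a}I_{4c}-12I_{4c}^2-20I_2I_3^3\,\bigr].
\]
On the regular stratum the bracket is non-zero, forcing $I_{5d}=0$; hence $\dim\Jj_1=1$ by Theorem~\ref{KVF}. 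Since $H$ is non-degenerate while $X^2$ is rank one, they are linearly independent in $\Jj_2$, and $\dim\Jj_2=4>2$ leaves room for an $F\in\Jj_2\setminus\langle H,X^2\rangle$.

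For the converse, assume $\dim\Jj_1=1$ with Killing field $X$ and some $F\in\Jj_2\setminus\langle H,X^2\rangle$, so $\dim\Jj_2\ge3$. The value $6$ is excluded, since $\dim\Jj_2=6$ forces $K\equiv\text{const}$ and hence $\dim\Jj_1=3$. To exclude the value $3$, pass to real isothermal coordinates adapted to $X=\partial_y$, so that $ds^2=e^{\lambda(x)}(dx^2+dy^2)$, and study the linear action $\mathfrak{L}_X=\partial_y$ on $\Jj_2^{\mathbb{C}}$, which annihilates both $H$ and $X^2$. A direct computation shows (a)~every generalized $0$-eigenvector lies in $\langle H,X^2\rangle$ — the putative Jordan block at $0$ forces $\lambda'$ to satisfy $\lambda'(\beta x/2+B_0)=-(\alpha/2)e^{-\lambda}-\beta$, giving $\lambda=-2\ln|x|+\text{const}$, i.e.\ the constant-curvature model, already ruled out; and (b)~each nonzero eigenvalue $c$ yields an eigenvector $\tilde F=e^{cy}(A(x)p_x^2+2B(x)p_xp_y+C(x)p_y^2)$ with $A=A_0e^{-\lambda}$, $B,C$ algebraically determined by $A$ and $\lambda'$, and the single scalar constraint
\[
(4c^2-E)(c^2-E)=\tfrac{3\lambda'}{2}\,E',\qquad E:=2\lambda''-(\lambda')^2,
\]
on $\lambda$. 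Being a quadratic in $c^2$, this admits at most two distinct nonzero $c^2$-values, each contributing exactly two eigenvalues $\pm c$ with one-dimensional eigenspaces. Thus $\dim\Jj_2\in\{2,4,6\}$ whenever $\dim\Jj_1=1$, and combining with $\dim\Jj_2\ge3$ and $\dim\Jj_2\ne6$ gives $\dim\Jj_2=4$.

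The main obstacle is the exceptional subcase in the forward direction where the bracket factor $I_{4a}^2+6I_{4a}I_{4c}-12I_{4c}^2-20I_2I_3^3$ vanishes jointly with $J_4=0$; there the companion identity $\nabla_1J_4\equiv0$ (or $\nabla_2J_5\equiv0$) must be invoked to still extract $I_{5d}=0$. In the converse, the Jordan-block verification in step~(a) is the other delicate point, but the explicit ODE it produces on $\lambda$ is recognizably the hyperbolic (constant-curvature) model and hence incompatible with the standing hypothesis $\dim\Jj_1=1$.
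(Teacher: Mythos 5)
Your forward direction is essentially the paper's argument: under $I_{4b}=0$ one has $\nabla_2I_2=\nabla_2I_3=\nabla_2I_{4a}=0$, and differentiating an order-$4$ relation along $\nabla_2$ isolates $I_{5d}$. The paper differentiates $J_5$, getting $2I_{5d}(I_{4a}+4I_{4c})=0$, and closes the degenerate branch $I_{4c}=-\tfrac14I_{4a}$ by applying $\nabla_2$ once more. Your choice of $J_4$ leads to $3I_{5d}(I_{4a}^2+6I_{4a}I_{4c}-12I_{4c}^2-20I_2I_3^3)=0$, and the assertion that the second factor is nonzero ``on the regular stratum'' is not a justification — it can vanish identically. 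The branch is closable (apply $\nabla_2$ to the vanishing factor to get $6I_{5d}(I_{4a}-4I_{4c})=0$, then $\nabla_2(I_{4a}-4I_{4c})=-4I_{5d}$), but you only gesture at this; still, this half is repairable.

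The converse is where the proposal genuinely fails. Your claim (a) — that away from constant curvature every generalized $0$-eigenvector of $\mathfrak{L}_X$ on $\Jj_2$ lies in $\langle H,X^2\rangle$ — is false, and the paper's own example refutes it: for $g_0=x(dx^2+dy^2)$ the curvature is $\tfrac1{2x^3}\ne\op{const}$, $X=\p_y$, and the integral $F=yH-2p_xp_y$ satisfies $\mathfrak{L}_XF=H$, so it is a rank-two generalized $0$-eigenvector outside $\langle H,X^2\rangle$; in this Darboux case the entire four-dimensional $\Jj_2$ is one generalized $0$-eigenspace ($\mathfrak{L}_XG=2F$, $\mathfrak{L}_XF=H$, $\mathfrak{L}_XX^2=0$). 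Indeed your own ODE $\l'(\b x/2+B_0)=-(\a/2)e^{-\l}-\b$ admits, for $\b=0$ and $B_0=-\a/2$, the solution $e^{\l}=x+\op{const}$ — exactly this metric — so concluding only $\l=-2\ln|x|+\op{const}$ is an error in solving your own equation. Hence the exclusion of $\dim\Jj_2=3$, and with it the parity argument, collapses as written (note the paper obtains the parity corollary \emph{from} the theorem, not conversely). The paper's converse is instead a symbol count in the prolongation–projection scheme: with $I_{4b}=0$ the compatibility condition $E_6$ drops to order $0$, so if $E_6\not\equiv0$ the symbols satisfy $\dim g_0\le2$, $\dim g_1=0$, forcing $\dim\Jj_2\le2$ and contradicting $\dim\Jj_2\ge3$; therefore $J_5=0$, the same argument applied to $\tilde E_6$ forces $J_4=0$, and Theorem~\ref{thdim4} gives $\dim\Jj_2=4$. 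You would need either that argument or a correct proof that the generalized $0$-eigenspace has even dimension, which your computation does not supply.
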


To one side this was proved in \cite{D}, while to the other it was given
in \cite{Koe}. It is instructive to see the equivalence by using
differential invariants only (thus below is an alternative proof of
this classical theorem):

 \smallskip

 \begin{proof}
Let us suppose at first that $g$ has 4 quadratic integrals. We may assume $K\ne\op{const}$. Thus $I_{4b}=0$ and $J_5=0$. We must show $I_{5d}=0$ (this follows easily from the condition that $v_x$-coefficient of $\tilde E_6$ vanishes, but we will show that it suffices to use only the first two conditions of Theorem \ref{thdim4}).

Note that under condition $I_{4b}=0$ we have: $\nabla_2 I_2=0$, $\nabla_2 I_3=0$ and $\nabla_2 I_{4a}=0$ (see identities of \S\ref{S2}). The latter follows from $I_{5b}=0$ as well as from the fact that the commutator $[\nabla_1,\nabla_2]$ is proportional to $\nabla_2$. Now equation $J_5=0$ can be written as
 $$
5I_3\nabla_1I_{4a}-16I_{4a}^2+2I_{4a}I_{4c}+4I_{4c}^2-20I_2I_3^3=0.
 $$
Applying $\nabla_2$ to this we get $2I_{5d}(I_{4a}+4I_{4c})=0$, which yields either $I_{5d}=0$ or $I_{4c}=-\tfrac14I_{4a}$. The latter after application of $\nabla_2$ gives $I_{5d}=0$ as well.

Now suppose that $g$ has a Killing vector field and an additional quadratic integral, so that the dimension of the space of quadratic integrals is at least 3. Since $I_{4b}=0$, the equation $E_6$ is of order 0.
If $E_6\not\equiv0$, then its prolongation adds a new first order equation to the system and the symbols satisfy: $\dim g_0\le2$, $\dim g_1=0$, so that the space of quadratic integrals cannot have dimension greater than $2$. If all the coefficients of $E_6$ vanish, then $J_5=0$.
If $J_4\ne0$, then $\tilde E_6$ is non-zero and of order 0. The same calculus for dimensions of symbols and solutions space leads to contradiction. On the other hand, if $J_4=0$, then $\tilde E_6\equiv0$
and we have 4 quadratic integrals.
 \end{proof}

 \begin{cor}
If $g$ possesses a Killing vector field, then its local degree of mobility $\dim\Jj_2$ is even: 2, 4 or 6.
 \end{cor}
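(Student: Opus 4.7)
The plan is to combine Theorem~\ref{KVF}, which classifies the possible dimensions of $\Jj_1$, with the Darboux--K{\oe}ning theorem just established. From the prolongation-projection analysis (and the list of possible values recalled in the Introduction) one already knows $\dim\Jj_2\in\{1,2,3,4,6\}$; the only values not among $\{2,4,6\}$ that can a priori occur are the odd values $1$ and $3$, so under the hypothesis $\dim\Jj_1\ge 1$ it suffices to exclude these two.

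If $\dim\Jj_1=3$, Theorem~\ref{KVF} gives $K=\op{const}$, and the pairwise products of elements of $\Jj_1$ span the $6$-dimensional subspace $S^2\Jj_1\subseteq\Jj_2$; combined with the a priori bound $\dim\Jj_2\le 6$ this forces $\dim\Jj_2=6$. Now assume $\dim\Jj_1=1$ and let $F_1=up_x+vp_y$ be a generator. Then both $H$ and $F_1^2$ lie in $\Jj_2$, and they are linearly independent as quadratic forms in momenta: fibrewise $H$ is non-degenerate while $F_1^2$ has rank one, so no scalar multiple of one equals the other. This rules out $\dim\Jj_2=1$. If $\dim\Jj_2=3$, then $\op{span}\{H,F_1^2\}\subsetneq\Jj_2$, so there exists a further quadratic integral independent of $H$ and $F_1^2$; but then the Darboux--K{\oe}ning theorem just proved forces $\dim\Jj_2=4$, a contradiction. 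Hence $\dim\Jj_2\in\{2,4\}$ in this case.

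There is no substantive technical obstacle here: the corollary is a purely formal consequence of the preceding theorem, once one observes the elementary rank argument showing that $H$ and the square of any non-zero linear integral are automatically linearly independent in $S^2T^*M$. The essential work is already done in the Darboux--K{\oe}ning statement, which is precisely what excludes the parasitic odd value $\dim\Jj_2=3$; the only case one has to treat separately is the space form case, where the conclusion comes for free from $S^2\Jj_1\subseteq\Jj_2$.
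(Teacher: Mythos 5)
Your proposal is correct and matches the paper's intent: the paper states this as an immediate corollary of the Darboux--K{\oe}ning theorem, and the argument it relies on is exactly yours --- the space-form case gives $\dim\Jj_2=6$, the fibrewise rank comparison of $H$ and $F_1^2$ gives $\dim\Jj_2\ge2$, and the value $3$ is excluded because any integral independent of $H$ and $F_1^2$ forces $\dim\Jj_2=4$ by the theorem (with $5$ already ruled out by the symbol computation of Section 4 when $K\ne\op{const}$). No substantive difference from the paper's (implicit) proof.
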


\section{\hps Digression II: On the number of invariants}\label{Sd1}

 \abz
Conditions $I_{4b}=0,J_5=0$ do not imply $J_4=0$. This pair of relations for differential invariants can be considered as an overdetermined system, but it is compatible meaning they do not produce new differential relations of lower order. Actually, we showed in the previous section that the two relations imply $I_{5d}=0$. Relations $\nabla_1I_{4b}=0$ gives $I_{5b}=0$ and $\nabla_2I_{4b}=0$ yields $I_{5c}=(I_{4a}I_{4c}-I_{4c}^2-I_2I_3^3)/I_3$. Then $J_5=0$ implies $I_{5a}=\frac25(3I_{4a}^2-I_{4a}I_{4c}-2I_{4c}^2+10I_2I_3^3)/I_3$.

Further derivations of these identities with $\nabla_s$ yield expressions for higher differential invariants $I_{ij}$, $i\ge6$, via invariants of order $\le4$ and they agree (there are 8 equations to determine 5 invariants of order 6, 12 equations to determine 6 invariants of order 7 etc), which manifests the above mentioned compatibility.

On the other hand, under certain genericity assumption, namely $I_{4c}(2I_{4a}+3I_{4c})\ne5I_2I_3^3$, the conditions $I_{4b}=0$, $J_4=0$ imply $J_5=0$. Indeed if we express $I_{5a},I_{5b},I_{5c},I_{5d}$ from $\nabla_1I_{4b}=0,\nabla_2I_{4b}=0,\nabla_1J_4=0,\nabla_2J_4=0$, and substitute this into $J_5$, the expression will have the factor $J_4$. Thus in this case the criterion of 4 integrals can be expressed as two differential conditions
 $$
I_{4b}=0,\quad J_4=0.
 $$
In general, however, we cannot remove the condition $J_5=0$ from Theorem \ref{thdim4}.\footnote{Indeed if the indicated inequality of forth order is an identity, we have 3 differential conditions of order 4 and so the condition $J_5=0$ can be reduced in order, but since this leads to an expression with roots, we do not provide it here.}

{\bf Example.} For the metric $g=\ve_1e^{(\b+2)x}dx^2+\ve_2e^{\b x}dy^2$ ($\ve_k=\pm1$; this is one family from the classification of \cite{BMM}) we have (the first two identities are obvious because $\p_y$ is the Killing field):
 \begin{multline*}
I_{4b}=0,\quad I_{5d}=0,\quad
J_5=\tfrac1{64}e^{-10(\b+2)x}\b^6(\b-1)(\b-6)(2+\b)^6,\\
J_4=\tfrac{\ve_1}{1024}e^{-15(\b+2)x}\b^{10}(\b-1)(\b+2)^9(3\b+22).
 \end{multline*}
Since $I_3=\frac{\ve_1}4e^{-3(\b+2)x}\b^2(\b+2)^2$, the cases $\b=-2,0$ correspond to constant curvature. Otherwise $J_5=0$ for $\b=1$ or $\b=6$. In the first case $J_4=0$ and we have $\dim\Jj_2(g)=4$. But in the second case $\dim\Jj_2(g)=2$.

Note also that $J_4=0$ for $\b=-22/3$, but then $I_{4c}(2I_{4a}+3I_{4c})=5I_2I_3^3$ and this does not imply $J_5=0$.

 \begin{rk}
$J_4$ is a forth order invariant obtained via reduction from a 6th order invariant modulo the conditions $I_{4b}=0$, $J_5=0$ and their $\nabla_i$-prolongations. Thus its vanishing alone without $J_5=0$ has no geometrical meaning.
 \end{rk}

\section{\hps Precisely 3 quadratic integrals}\label{S6}

 \abz
If the compatibility condition $E_6=0$ is not trivial, then we add it and get a new system $\E''$. In this section we consider the generic case when this new equation is of order 1 in $u,v,w$, i.e. $I_{4b}\ne0$.

Then the symbol of the system $\E''$ is $g_1''=0$, i.e. it is of Frobenius type.
Its Spencer cohomology group $H^{0,2}(\E'')\simeq\R^3$, so the
obstruction to integrability -- curvature tensor -- $W_1''$ has 3
components, represented by 3 linear relations on
$J^0(2,3)$. Indeed, we can express from $\E''$ all derivatives $u_x,u_y,v_x,v_y,w_x,w_y$, calculate 3 difference of pairs of mixed derivatives and substitute the derivative expressions. We get the following equations:
 \begin{equation}\label{pant}
E_7=Au+Bw=0,\qquad E''_7=\bar Bu+\bar Aw=0,\qquad E'_7=\tfrac12(E'_7+E''_7)=0,
 \end{equation}
where $A,B$ are certain complex differential expressions of order 6 in metric (see below). One peculiarity of (\ref{pant}) is absence of $v$. Another is that
there are only two equations, not three as expected from the general theory.

Vanishing of $E_7,E''_7$ is equivalent to four real conditions $A=0,B=0$, which
can be expressed via differential invariants of order 6. In the following sections we will show that $I_{4b}=0$, but $J_5\ne0$ or $J_4\ne0$ implies $\dim\Jj_2<3$ and so we obtain: the following criterion (note that $I_{6e}$ does not enter the formulae):

 \begin{theorem}\label{thdeg3}
The condition of exactly 3 quadratic integrals is equivalent to two inequalities $I_3\ne0$, $I_{4b}\ne0$ and 4 differential relations on the metric:
  \begin{multline*}
I_{6a}=\frac1{175I_3^2I_{4b}}
 \bigl(
 700I_3^5I_{4b} - 825I_2I_3^4I_{5b} +
50I_2I_3^3I_{4b} (31I_{4a} - 18I_{4c})\\
 + 6I_{4b} (I_{4a} - I_{4c}) (6I_{4a}^2 + 49I_{4b}^2
- 37I_{4a}I_{4c} + 6I_{4c}^2 ) - 25I_3^2I_{5b} (-8I_{5a} +I_{5c})\\
 - 5I_3 (48I_{4a}^2I_{5b} - 27I_{5b}I_{4c}^2 +
2I_{4b}I_{4c} (-11I_{5a} + 46I_{5c} )\\
 + I_{4a} (-43I_{5a}I_{4b} - 21I_{5b}I_{4c} +
8I_{4b}I_{5c}) + 7I_{4b}^2 (4I_{5b} - 11I_{5d}) )
 \bigr)
 \end{multline*}
 \begin{multline*}
I_{6b}=\frac1{175I_3^2I_{4b}}
 \bigl(
1505I_2I_3^3I_{4b}^2 + 72I_{4a}^2I_{4b}^2 +
245I_3I_{5a}I_{4b}^2 + 588I_{4b}^4 + 225I_3^2I_{5b}^2\\
 + 405I_3I_{4b}I_{5b}I_{4c}
 + 72I_{4b}^2I_{4c}^2 - 6I_{4a}I_{4b} (55I_3I_{5b} +74I_{4b}I_{4c})  - 490I_3I_{4b}^2I_{5c}
 \bigr)
 \end{multline*}
  \begin{multline*}
I_{6c}=\frac1{175I_3^2I_{4b}}
 \bigl(
-175I_3^5I_{4b} + 300I_2I_3^4I_{5b} - 25I_3^2I_{5a}I_{5b} -
 100I_2I_3^3I_{4b} (5I_{4a} -9I_{4c})\\
-6I_{4b} (I_{4a} -I_{4c}) (6I_{4a}^2 +49I_{4b}^2 -37I_{4a}I_{4c} + 6I_{4c}^2) + 200I_3^2I_{5b}I_{5c} + 5I_3 (6I_{4a}^2I_{5b}\\
\quad +36I_{5b}I_{4c}^2 - I_{4b}I_{4c} (I_{5a}+34I_{5c}) +6I_{4a} (I_{5a}I_{4b} -7I_{5b}I_{4c} -I_{4b}I_{5c}) + 7I_{4b}^2(8I_{5b}- I_{5d}) )
 \bigr)
 \end{multline*}
  \begin{multline*}
I_{6d}=\frac1{175I_3^2I_{4b}}
 \bigl(
1500I_2^2I_3^6 + 36I_{4a}^4 + 25I_3^2I_{5a}^2 +
  245I_3I_{5a}I_{4b}^2 + 588I_{4b}^4 + 225I_3^2I_{5b}^2\\
- 294I_{4a}^3I_{4c} + 895I_3I_{4b}I_{5b}I_{4c} - 185I_3I_{5a}I_{4c}^2 + 366I_{4b}^2I_{4c}^2 + 36I_{4c}^4 + 6I_{4a}^2 (61I_{4b}^2\\
+ 86I_{4c}^2 -5I_3( 2I_{5a} -9I_{5c}) )- 225I_3^2I_{5a}I_{5c} -
 490I_3I_{4b}^2I_{5c} + 220I_3I_{4c}^2I_{5c} + 200I_3^2I_{5c}^2 \\
+ 5I_2I_3^3 (102I_{4a}^2 - 294I_{4a}I_{4c} + 4(49I_{4b}^2 + 48I_{4c}^2)
 +I_3(-85I_{5a} +260I_{5c}))- 245I_3I_{4b}I_{4c}I_{5d}\\
\quad - I_{4a}(6I_{4c} (172I_{4b}^2+ 49I_{4c}^2) + 5I_3(-49I_{4c}(I_{5a} - 2I_{5c}) +
 I_{4b}(164I_{5b} - 49I_{5d}) ))
 \bigr)
 \end{multline*}
 \end{theorem}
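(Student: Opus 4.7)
The plan is to continue the prolongation-projection scheme from Section \ref{S4} under the standing assumptions $I_3\ne0$ (so $K$ is non-constant, excluding the six-integral case) and $I_{4b}\ne0$ (so the compatibility equation $E_6$ from Section \ref{S4} is genuinely of order one). First I would adjoin $E_6$ to the system $\E'$ and form $\E''$. Because the coefficient of $v_x$ in $E_6$ is $\frac{35}{4\imag}e^{2\l}I_{4b}\ne0$, the new symbol satisfies $\dim g_0''=3$, $\dim g_1''=0$, so $\E''$ is of Frobenius type: every first derivative $u_x,u_y,v_x,v_y,w_x,w_y$ may be expressed as an explicit linear combination of $u,v,w$ with coefficients of order $\le5$ in the metric.

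Next I would write out the three commutation relations $(u_x)_y=(u_y)_x$, $(v_x)_y=(v_y)_x$, $(w_x)_y=(w_y)_x$, substitute the first-derivative formulas from $\E''$, and reduce. A direct (but tedious) computation shows that (i) all $v$-terms cancel identically — this is the peculiar absence of $v$ announced before (\ref{pant}) — and (ii) the third relation is a linear combination of the other two, leaving exactly the two complex equations $E_7=Au+Bw=0$ and $E_7''=\bar Bu+\bar Aw=0$ of (\ref{pant}). The two algebraic facts (cancellation of $v$ and rank-2 redundancy) correspond exactly to the isomorphism $H^{0,2}(\E'')\simeq\R^3$ collapsing to an effective $\R^2$, a feature already emphasized in the text.

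For exactly three quadratic integrals, the system $\E''$ must be formally integrable with $\dim\Jj_2=\sum\dim g_k''=3$; this is equivalent to the two relations $E_7=E_7''=0$ being automatic, i.e.\ to $A=B=0$. Splitting these two complex equations into real and imaginary parts gives four real scalar conditions on the metric. Since $A,B$ involve jet-variables of order up to $6$ in $g$, the natural invariant ambient space is $\mathfrak{A}^{(6)}$. Using the differential identities of Section \ref{S2} and their prolongations along $\nabla_1,\nabla_2$, one re-expresses every 6th-order partial derivative of the isothermal parameters in the $I_{ij}$-basis. The four real conditions then become four invariant relations; the key (somewhat miraculous) linear-algebra observation is that the Jacobian of $(\op{Re}A,\op{Im}A,\op{Re}B,\op{Im}B)$ with respect to $(I_{6a},I_{6b},I_{6c},I_{6d})$ is nondegenerate (granted $I_3I_{4b}\ne0$) and independent of $I_{6e}$, so that the system may be solved uniquely for $I_{6a},\dots,I_{6d}$, producing the four displayed formulas and leaving $I_{6e}$ unconstrained — hence its absence from the statement.

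Finally I would check the converse: if the four invariant identities hold, the vanishing of $A$ and $B$ makes $\E''$ formally integrable, and finite type combined with the Cartan–Kuranishi theorem delivers $\dim\Jj_2=3$. One must also verify that the inequality $I_{4b}\ne0$ really distinguishes this case from the $I_{4b}=0$ branches handled in Sections \ref{S4} and the sequel: under $I_{4b}=0$ but $J_5\ne0$ or $J_4\ne0$, the calculations carried out there force $\dim\Jj_2<3$, so no spurious 3-integral metrics are missed. The principal obstacle is the middle step — the explicit reduction of $A=B=0$ to the four boxed invariant identities — which is a purely mechanical but very large symbolic computation; it is made feasible only by systematically using the $\nabla_1,\nabla_2$-prolongations of the $I_{ij}$-identities of Section \ref{S2} to eliminate the non-tensorial coordinate derivatives of $\l$ that otherwise pollute the expressions.
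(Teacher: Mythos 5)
Your proposal follows essentially the same route as the paper: adjoin $E_6$ to obtain the Frobenius-type system $\E''$ with $\dim g_1''=0$, derive the compatibility conditions as cross-derivative relations which reduce to the two complex equations $E_7=Au+Bw=0$, $E_7''=\bar Bu+\bar Aw=0$ (with $v$ absent and only two independent relations out of the expected three), and identify exactly three integrals with the four real conditions $A=B=0$, rewritten in the invariant basis by solving for $I_{6a},\dots,I_{6d}$ with $I_{6e}$ unconstrained. Your reconstruction of the linear change of variables between $(\op{Re}A,\op{Im}A,\op{Re}B,\op{Im}B)$ and the four displayed relations matches the paper's Remark \ref{rk1}, and the exclusion of the $I_{4b}=0$ branches is handled there just as you describe.
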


 \begin{rk}\label{rk1}
Denoting the above four equations (i.e. l.h.s-r.h.s.) by $V_1,V_2,V_3,V_4$, we can write $A=(V_2+V_4)+i(V_1+V_3)$, $B=(3V_2-V_4)+i(3V_3-V_1)$.
 \end{rk}

{\bf Example.} Consider the metric
 $$
ds^2=(x^2+q_2(y))(dx^2+dy^2),
 $$
where $q_2(y)=ay^2+by+c$. This metric is in Liouville form and hence
has an additional quadratic integral. We can calculate the
invariants from the previous theorems to find when the space of
quadratic integrals has dimension $D>2$. Here's the result according
to dimension:
 \begin{itemize}
\item[$\triangleleft$] $D=6$ if $a=1\ \&\ 4c=b^2$;
\item[$\triangleleft$] $D=4$ if $a=1\ \&\ 4c\ne b^2$;
\item[$\triangleleft$] $D=3$ if $a=4^{\pm1}\ \&\ b,c$ arbitrary.
 \end{itemize}
Note that the integrable metric $(x^2+4y^2+1)ds^2_\text{Eucl}$ was
found in classification of Matveev \cite{M$_3$}. However the methods
used by him are global and do not apply to local non-complete
situation.

 \begin{rk}
One can substitute the general Liouville form $ds^2=\Lambda\cdot(dx^2+dy^2)$ in local conformal coordinates into the above four expressions. The result is a system of 3 PDEs of order 6 in $\Lambda$ together with the equation $\Lambda_{xy}=0$ (which simplifies the 3 PDEs a lot). This system is not of finite type (for instance because it contains the cases of 4 integrals depending on 1 function of 1 variable) and it is not formally integrable: an easy elimination reduces one PDE of order 6 to order 5. Then its prolongation yields two new PDEs of order 5, but they are too long to be treated effectively.

In fact, normal forms of metrics with 2 additional integrals are better obtained with a different approach, see \cite{Koe}.
 \end{rk}

\section{\hps Digression III: Simplification of invariants}\label{Sd2p}

 \abz
The four relations from Theorem \ref{thdeg3} provide the complete set, characterizing the condition $\dim\Jj_2=3$, but they are not compatible in the following sense. If we deduce the differential corollaries via derivations $\nabla_1,\nabla_2$, some of them will have lower order and be simpler. Let us indicate this.

Substitution of the expressions of $I_{6a},I_{6b},I_{6c},I_{6d}$ from Theorem \ref{thdeg3} to the identity (twice: before and after derivations!)
 \begin{multline*}
\nabla_1I_{6b}-\nabla_2I_{6a}\\
=(6I_{4b}I_3^3+6I_2I_{5b}I_3^2+3I_2I_{4b}(I_{4a}+I_{4c})I_3-5I_{6a}I_{4b}+4I_{4a}I_{6b}-4I_{6b} I_{4c}+3I_{4b}I_{6c})/I_3
 \end{multline*}
yields us the following new relation:
 \begin{multline*}
I_{6e}=
\frac{1}{13475I_3^2I_{4b}^3}
\bigl(375I_2^2(-34I_{4a}I_{4b}+764I_{4c}I_{4b}+75I_3I_{5b})I_3^6+61250I_{4b}^3I_3^5\\
+4500I_{5b}^3I_3^3+1125I_{5b}I_{5c}^2I_3^3+1125I_{5a}^2I_{5b}I_3^3-2250I_{5a}
I_{5b}I_{5c}I_3^3-10I_2(906I_{4b}I_{4a}^3\\
-6(225I_3I_{5b}+2068I_{4b}
I_{4c})I_{4a}^2+(6I_{4b}(917I_{4b}^2+3083I_{4c}^2)-5I_3(151I_{5a}I_{4b}+94I_{5c}I_{4b}\\
-315I_{5b}I_{4c}))I_{4a}-4I_{4b}
I_{4c}(4333I_{4b}^2+1749I_{4c}^2)+1125I_3^2I_{5b}(I_{5a}-I_{5c})-5I_3(35(22I_{5b}\\
+7I_{5d})I_{4b}^2-4I_{4c}(524I_{5a}-769I_{5c})I_{4b}+45I_{5b}I_{4c}^2))I_3^3
-450I_{5a}I_{5b}I_{4c}^2I_3^2\\
+19300I_{4b}I_{4c}I_{5c}^2I_3^2-2800I_{5a}I_{4b}^2I_{5b}I_3^2+55900I_{4b}
I_{5b}^2I_{4c}I_3^2+9500I_{5a}^2I_{4b}I_{4c}I_3^2\\
+450I_{5b}I_{4c}^2I_{5c}I_3^2-27825I_{4b}^2I_{5b}I_{5c}I_3^2
-28800I_{5a}I_{4b}I_{4c}I_{5c}I_3^2-2450I_{5a}I_{4b}^2I_{5d}I_3^2\\
-9800I_{4b}I_{5b}I_{4c}I_{5d}I_3^2+2450I_{4b}^2I_{5c}I_{5
d}I_3^2+45I_{5b}I_{4c}^4I_3-13600I_{5a}I_{4b}I_{4c}^3I_3\\
+111560I_{4b}^2I_{5b}I_{4c}^2I_3+26705I_{4b}^4I_{5b}I_3+40110I_{5a}I_{4b}^3I_{4c}I_3
+15560I_{4b}I_{4c}^3I_{5c}I_3\\
-108465I_{4b}^3I_{4c}I_{5c}I_3+45080I_{4b}^4I_{5d}I_3-15190I_{4b}^2
I_{4c}^2I_{5d}I_3+2340I_{4b}I_{4c}^5+25698I_{4b}^3I_{4c}^3\\
-1440I_{4a}^5I_{4b}+53802I_{4b}^5I_{4c}+60I_{4a}^4(27I_3I_{5b}+235I_{4b}I_{4c})
+6I_{4a}^3(-2458I_{4b}^3\\
-6625I_{4c}^2I_{4b}+10I_3(40I_{5a}I_{4b}+9I_{5c}I_{4b}
-63I_{5b}I_{4c}))+I_{4a}^2(67644I_{4c}I_{4b}^3+45300I_{4c}^3I_{4b}\\
-2700I_3^2I_{5b}(I_{5a}-I_{5c})+5I_3(2(2476I_{5b}+931I_{5d})I_{4b}^2
+2I_{4c}(3148I_{5c}-2315I_{5a})I_{4b}\\
+549I_{5b}I_{4c}^2))-2I_{4a}(25(20I_{4b}I_{5a}^2+9(I_{4b}I_{5c}-7I_{5b}I_{4c})I_{5a}
+63I_{5b}I_{4c}I_{5c}+I_{4b}(373I_{5b}^2\\
+49I_{5d}I_{5b}-29I_{5c}^2))I_3^2-5(2359I_{5c}I_{4b}^3+I_{4c}(1813I_{5d}
-16307I_{5b})I_{4b}^2-4758I_{4c}^2I_{5c}I_{4b}\\
-63I_{5b}I_{4c}^3+I_{5a}(679I_{4b}^3+3435I_{4c}^2I_{4b}))I_3+3I_{4b}(4067I_{4b}^4+15599
I_{4c}^2I_{4b}^2+3425I_{4c}^4))\bigr).
 \end{multline*}

Using similar identities for $\nabla_1I_{6c}-\nabla_2I_{6b}$, $\nabla_1I_{6d}-\nabla_2I_{6c}$, $\nabla_1I_{6e}-\nabla_2I_{6d}$ and substitutions of the 6th order invariants via the lower ones, we get 3 differential relations of order 5 (but they are non-linear even in higher order basic invariants). The first of them is:
 \begin{multline*}
1500I_2^2I_3^6-5I_2(-102I_{4a}^2+294I_{4c}I_{4a}-6(49I_{4b}^2+32I_{4c}^2)
+5I_3(17I_{5a}-52I_{5c}))I_3^3\\
+25I_{5a}^2I_3^2+275I_{5b}^2I_3^2+200I_{5c}^2I_3^2-225I_{5a}I_{5c}I_3^2
-175I_{5b}I_{5d}I_3^2+245I_{5a}I_{4b}^2I_3\\
-185I_{5a}I_{4c}^2I_3+1265I_{4b}I_{5b}I_{4c}I_3-1225I_{4b}^2I_{5c}
I_3+220I_{4c}^2I_{5c}I_3-280I_{4b}I_{4c}I_{5d}I_3\\
+36I_{4a}^4+1176I_{4b}^4+36I_{4c}^4+438I_{4b}^2I_{4c}^2-294I_{4a}^3I_{4c}
+6I_{4a}^2(73I_{4b}^2+86I_{4c}^2-5I_3(2I_{5a}\\
-9I_{5c}))-I_{4a}(6I_{4c}(246I_{4b}^2+49I_{4c}^2)+5I_3(I_{4b}(188I_{5b}
-91I_{5d})-49I_{4c}(I_{5a}-2I_{5c})))=0
 \end{multline*}
and the other two are more complicated.

Furthermore these three relations can be invariantly differentiated and
then simplified with substitutions, which resembles Cartan's prolongation-projection method, though for differential invariants. In a sequel one gets "compatible" set of relations for differential invariants, but this involves consideration of cases (lots of inequalities and equalities) and will be omitted.

\section{\hps Generic case: Liouville form}

 \abz
Here we continue investigation of the previous section, when $I_{4b}\ne0$.
Suppose that not all equalities of the previous theorem hold.
Then $E_7$ is a non-trivial equation. If $E''_7$ is independent of it,
we get $u=w=0$ and then $v=\op{const}\cdot e^{-\l}$, so that there exists
no quadratic integral besides the Hamiltonian.

Thus for existence of an additional quadratic integral the corresponding
determinant $|A|^2-|B|^2$ should vanish (note that this implies $w=\bar u$,
which could be predicted because integral $F$ is real). In this case the
symbols dimensions are $\dim g_0=2$, $\dim g_1=0$, so for Liouville (quadratic)
integrability of the metric $g$ the system $\E'''=\{E_1=0,\dots,E_7=0\}$ should be compatible.

There are precisely two compatibility conditions: $\D_xE_7=0\,\op{mod}\E'''$
and $\D_yE_7=0\,\op{mod}\E'''$. The reduction $\op{mod}\E'''$ can be considered
here as follows: all derivatives are expressed from the first 6 equations and
substituted into derivatives of $E_7$. Then the equations are again linear and
contain only $u$- and $w$-terms. Writing linear dependence with $E_7$ we get
vanishing of two (complex) determinants. This constitutes 4 real relations
of order 7, but we write them as 2 complex relations.

In the theorem below $A,B$ are differential invariants from (\ref{pant})
(expressions are given in Remark \ref{rk1}) and
$\mathfrak{J}_1,\mathfrak{J}_2,\mathfrak{J}_3,\mathfrak{J}_4$ are some
differential invariants of order 7, precise form of which is given in Appendix.

 \begin{theorem}\label{mainTh}
Suppose that $K\ne\op{const}$, $I_{4b}\ne0$ and $|A|^2+|B|^2\ne0$ (cases considered separately).
Then the metric $g$ is Liouville iff it satisfies one real relation of order 6:
$|A|^2=|B|^2$ and 4 real relations of order 7:
 $$
B\,\mathfrak{J}_1=A\,\mathfrak{J}_2,\qquad A\,\mathfrak{J}_3=B\,\mathfrak{J}_4.
 $$
 \end{theorem}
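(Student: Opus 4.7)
I continue the prolongation-projection scheme of Section~7 under the assumption $I_{4b}\neq 0$, where $\E''=\{E_1,\dots,E_6\}$ is Frobenius and its first Weyl tensor is realised by the pair~(\ref{pant}) of complex linear equations $E_7=Au+Bw$ and $E_7''=\bar Bu+\bar Aw$, both involving only $u$ and $w$. We are in the complementary case $(A,B)\not\equiv(0,0)$ to Theorem~\ref{thdeg3}. The first step is to observe that an additional quadratic integral can exist only if $E_7$ and $E_7''$ are proportional: otherwise they force $u=w=0$, and then $E_2,E_3$ immediately give $(ve^{\lambda})_y=(ve^{\lambda})_x=0$, so $v=c\cdot e^{-\lambda}$ and $F_2=2cH$ is just the Hamiltonian. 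Proportionality of the two equations is the vanishing of the determinant $A\bar A-B\bar B$, i.e., the sixth-order real relation $|A|^2=|B|^2$; it simultaneously imposes $w=\bar u$, consistent with the reality of $F_2$.

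Under $|A|^2=|B|^2$ I form $\E'''=\{E_1,\dots,E_7\}$, whose symbols satisfy $\dim g_0'''=2$ and $\dim g_1'''=0$. The remaining obstructions to formal integrability lie in $H^{0,2}(\E''')$ and are captured by $\D_xE_7$ and $\D_yE_7$ reduced modulo $\E'''$. Since $\E''$ is Frobenius, I can explicitly solve it for all six first derivatives $u_x,u_y,v_x,v_y,w_x,w_y$ as linear combinations of $(u,v,w)$; substituting into $\D_iE_7=A_iu+Au_i+B_iw+Bw_i$ produces a linear relation in $(u,v,w)$ with coefficients that are order-$7$ differential invariants. A key structural fact, inherited from the absence of $v$ in $E_7$ and $E_7''$, is that the $v$-coefficient cancels identically after the substitution, so only $u$- and $w$-terms remain. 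For the reduced relation to lie in the ideal generated by $E_7=Au+Bw$ one needs the $(u,w)$-coefficients proportional to $(A,B)$, a single complex $2\times 2$ determinant condition for each $\D_iE_7$. Written out, these are $B\mathfrak{J}_1-A\mathfrak{J}_2=0$ and $A\mathfrak{J}_3-B\mathfrak{J}_4=0$ with $\mathfrak{J}_i$ the order-$7$ invariants listed in the Appendix, giving the four real relations of the theorem.

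Once all five conditions hold, the first prolongation of $\E'''$ has trivial symbol and $H^{k,2}(\E''')=0$ for $k\geq 1$, so Cartan--Kuranishi terminates: $\E'''$ is locally integrable with $\dim\Jj_2=2$, producing an additional quadratic integral, which by the Darboux normal form~\cite{D,B} puts $g$ in Liouville form. Conversely, a Liouville metric has $\dim\Jj_2\geq 2$ and hence forces every Weyl tensor in this chain to vanish, yielding each listed condition. The principal obstacle is computational rather than conceptual: the Frobenius reduction of $\D_iE_7$ and the confirmation that the $v$-coefficient vanishes -- so that the obstruction consists of exactly two complex determinants rather than four -- require careful symbolic manipulation, and identifying the surviving $(u,w)$-determinants as the invariants $\mathfrak{J}_1,\dots,\mathfrak{J}_4$ in closed form is the main technical step.
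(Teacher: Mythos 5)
Your proposal is correct and follows essentially the same route as the paper: the same dichotomy on independence of $E_7,E_7''$ forcing $u=w=0$ and $v=c\,e^{-\l}$, the same determinant condition $|A|^2=|B|^2$, and the same reduction of $\D_xE_7,\D_yE_7$ modulo the Frobenius system to two complex determinant conditions yielding the four order-7 relations. The one point you flag as needing verification --- cancellation of the $v$-coefficient after substitution --- is exactly the computational fact the paper asserts (the reduced equations ``contain only $u$- and $w$-terms''), so there is no gap beyond the symbolic work both arguments defer to.
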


Thus the problem of invariant characterization of Liouville metrics is solved.

 \begin{rk}
Similar to Section \ref{Sd2p} one can reduce in order and simplify differential relations
from Theorem \ref{mainTh}, but since the resulting minimal set is very cumbersome
(collection of cases involving equalities and inequalities), it won't be discussed.
 \end{rk}

\section{\hps Singular locus: 2 quadratic integrals}\label{S6}

 \abz
Consider now the last case $I_{4b}=0$, but suppose that either
$J_5\ne0$ or $J_4\ne0$. In this case the equation $E_6$ (resp.
$\tilde E_6$) transforms into the equation (since $K\ne\op{const}$,
we may assume $K_x\ne0$ or $K_y\ne0$; formulae below are easily
adjustable to one of the cases):
 \begin{equation}\label{sl2}
K_x^2u=K_y^2w.
 \end{equation}
Prolonging this equation and using the system
$\E'=\{E_1=\dots=E_5=0\}$ we can rewrite the new system $\bar\E$
(prolongation of $\E''$) in the form:
 \begin{alignat}{3}\label{sl2a}
&u_x=\Bigl(2\log\frac{K_y}{K_x}\Bigr)_x\,u,\quad & v_x=-\l_x\,v+\Bigl(\log\frac{K_y}{K_x}-\l\Bigr)_y\,w,\quad & w_x=0,\\
&u_y=0,\quad & v_y=-\l_y\,v+\Bigl(\log\frac{K_x}{K_y}-\l\Bigr)_x\,u,\quad & w_y=\Bigl(2\log\frac{K_x}{K_y}\Bigr)_y\cdot w,\nonumber
 \end{alignat}
considered together with (\ref{sl2}). System (\ref{sl2a}) consists
of a three pair of equations, two uncoupled and one coupled with the
other two. The system is of Frobenius type. Writing compatibility
conditions of $\bar\E$ modulo (\ref{sl2})+(\ref{sl2a}) we get 3
conditions on the system to be integrable.

These three conditions are dependent (2 conditions), but modulo the
condition $I_{4b}=0$ they collapse to only one condition $I_{5d}=0$.

Note that the system has dimension of symbols $\dim\bar
g_0=2,\dim\bar g_1=0$, so that the maximal dimension of the solution
space is 2. Since the minimum is 1, we arrive to the following
statement:

 \begin{theorem}
Let $I_{4b}=0$, but either $J_5\ne0$ or $J_4\ne0$. Then the system
is Liouville iff $I_{5d}=0$ and in this case there exists only one
additional (independent of the Hamiltonian) quadratic integral.
 \end{theorem}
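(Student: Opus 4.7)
Under the standing hypothesis $I_{4b}=0$ (which by the identities of Section~\ref{S2} also forces $I_{5b}=0$), a first step is to inspect the coefficients of $E_6$ and $\tilde E_6$ from Section~\ref{S4}. Substituting $I_{4b}=I_{5b}=0$ the $v_x$- and $v$-terms in $E_6$ vanish, while the remaining coefficients become proportional to $J_5$; similarly $\tilde E_6$ is proportional to $J_4$. The combined content of both compatibility equations is thus $(\text{nonvanishing scalar})\cdot(K_x^2u-K_y^2w)=0$, which under the hypothesis $J_5\ne0$ or $J_4\ne0$ yields the purely algebraic constraint \eqref{sl2}. Differentiating \eqref{sl2} in $x$ and $y$ and combining the result with $E_1,E_2,E_3,E_4$ (which already express $u_y,u_x+2v_y,2v_x+w_y,w_x$ algebraically in terms of $u,v,w$) gives a linear $6\times 6$ system for all first-order derivatives of $u,v,w$; away from the locus $K_x=K_y=0$ this matrix is invertible and produces exactly the closed-form Frobenius system \eqref{sl2a}.

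Next comes the compatibility analysis of $\bar\E=\{\eqref{sl2},\eqref{sl2a}\}$. Since $\dim\bar g_1=0$, the only obstructions are Frobenius-type: applying $\D_y$ to each equation in the first row of \eqref{sl2a} and $\D_x$ to the corresponding equation in the second row, subtracting, and substituting back via \eqref{sl2a} and \eqref{sl2} yields three scalar relations of order zero in $(u,v,w)$. The $v$-decoupled pair ($u$-compatibility and $w$-compatibility) gives two of these; the third arises from the $v$-equation. A direct linear-algebra check shows the three combinations are linearly dependent modulo \eqref{sl2}, reducing the count to two. The \textbf{main obstacle} is verifying that these two remaining conditions further collapse to a single relation once we impose $I_{4b}=0$. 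This uses the chain $\nabla_2I_2=\nabla_2I_3=\nabla_2I_{4a}=I_{5b}=0$ together with the consequences of $\nabla_1I_{4b}=0$ and $\nabla_2I_{4b}=0$ derived in Section~\ref{S2}, allowing systematic elimination of all fifth-order invariants except $I_{5d}$. Expressing the surviving combination in the invariant basis, the result is precisely $I_{5d}=0$; the computation is routine but delicate, and the intermediate expressions are significantly compressed by recognizing the building blocks $K_{xx}-\l_xK_x$, $K_{yy}-\l_yK_y$, $\l_yK_x-\l_xK_y$ already present in $E_5$.

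Finally, the count of integrals follows from symbol dimensions. Since \eqref{sl2} cuts one line out of the fiber $N\simeq\R^3$ we have $\dim\bar g_0=2$, while $\dim\bar g_1=0$ by construction of \eqref{sl2a}. By the prolongation--projection framework of Section~\ref{S1}, $\dim\Jj_2\le\sum_k\dim\bar g_k=2$, with equality iff the single compatibility condition $I_{5d}=0$ holds. The Hamiltonian $H=e^{-\l}p_xp_y$ always provides one solution (corresponding to $u=w=0$, $v=\tfrac12e^{-\l}$, which trivially satisfies \eqref{sl2}), so in the compatible case exactly one additional linearly independent quadratic integral exists, while in the incompatible case $\dim\Jj_2=1$ and no Liouville structure is available. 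This establishes both the criterion and the count claimed in the theorem.
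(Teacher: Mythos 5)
Your outline reproduces the paper's own route essentially step for step: under $I_{4b}=0$ the equations $E_6$, $\tilde E_6$ degenerate to multiples of $J_5$, $J_4$ times the algebraic constraint (\ref{sl2}); prolonging (\ref{sl2}) yields the Frobenius system (\ref{sl2a}); the three cross-derivative obstructions are dependent and, modulo $I_{4b}=0$, collapse to $I_{5d}=0$; and the count follows from $\dim\bar g_0=2$, $\dim\bar g_1=0$ together with the fact that the Hamiltonian always occupies one dimension. The one place where your argument, as written, has a genuine hole is the derivation of (\ref{sl2a}): you solve for the six first derivatives of $u,v,w$ using only $E_1,\dots,E_4$ and the two total derivatives of (\ref{sl2}), and then declare the system to be $\bar\E=\{(\ref{sl2}),(\ref{sl2a})\}$. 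But the system you are analyzing also contains the first-order equation $E_5$, which you never impose. Since you already have six equations for the six derivatives, $E_5$ becomes, after substituting (\ref{sl2a}), a zeroth-order relation in $u,v,w$; were it independent of (\ref{sl2}) it would force $u=w=0$ and the theorem's count would fail. So its consistency must be verified, not silently assumed.

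The verification is short and instructive: modulo (\ref{sl2a}) the $v$-terms of $E_5$ cancel identically, the $K_{xy}$-contributions from the $u$- and $w$-slots cancel after using (\ref{sl2}), and the residue is
 $$
E_5\ \equiv\ \frac{4u}{K_y^2}\Bigl(K_y^2(K_{xx}-\l_xK_x)-K_x^2(K_{yy}-\l_yK_y)\Bigr)\quad (\op{mod}\ (\ref{sl2}),(\ref{sl2a})),
 $$
and the bracket is a nonzero multiple of $I_{4b}$ in the hyperbolic conformal gauge. Thus $E_5$ is redundant precisely because of the standing hypothesis $I_{4b}=0$ — this is why the paper derives (\ref{sl2a}) "using the system $\E'=\{E_1=\dots=E_5=0\}$" rather than only $E_1,\dots,E_4$. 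With this check inserted your proof closes; the remaining assertion that the two surviving compatibility conditions reduce to the single relation $I_{5d}=0$ is stated rather than computed, but the paper is equally terse on that point.
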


Note that condition $I_{4b}=0,I_{5d}=0$ are characteristic for
existence of local Killing vector field. Thus we conclude:

 \begin{cor}
Riemannian metric $g$ possesses a local Killing field iff $I_{4b}=0$
and there is a quadratic integral, independent of the Hamiltonian.
 \end{cor}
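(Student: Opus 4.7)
The corollary essentially combines Theorem \ref{KVF} (criterion for a Killing field) with the immediately preceding theorem (criterion for a second quadratic integral in the singular locus $I_{4b}=0$), plus Theorem \ref{thdim4} (case of four quadratic integrals). I would structure the proof as two implications with case-analysis on $K$ and on the invariants $J_5, J_4$.

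For the forward direction, suppose $g$ admits a local Killing vector field $V$. Then $F_1 = V^\flat \in C^\infty(T^*M)$ is a linear integral, so $F_1^2$ is a quadratic integral. Since $F_1^2$ has rank $1$ as a quadratic form on fibers of $T^*M$ whereas $H$ has rank $2$, they are pointwise linearly independent, giving the required extra quadratic integral. Theorem \ref{KVF} then yields $I_{4b}=0$ directly (in the non-constant curvature case; in the constant curvature case $I_3=0$ and hence $I_{4b}=0$ trivially since it is built from $d_\nabla^{\ot2}K$ applied to $\op{grad}K$ and $\op{sgrad}K$).

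For the converse, assume $I_{4b}=0$ and that there is a quadratic integral $F_2$ independent of $H$, so $\dim\Jj_2\ge2$. Split into cases on whether $K$ is constant. If $K=\op{const}$, then by the discussion after $E_5$ in \S\ref{S4} we have $\dim\Jj_1=3$ (space form) and a Killing field exists. Otherwise $I_3\ne0$, and I further split on whether $(J_5,J_4)=(0,0)$. If $J_5\ne0$ or $J_4\ne0$, the theorem just above the corollary applies: existence of an additional quadratic integral forces $I_{5d}=0$, and then Theorem \ref{KVF} produces a Killing field. If instead $J_5=J_4=0$, then Theorem \ref{thdim4} gives $\dim\Jj_2=4$, but more importantly the argument in \S\ref{S5} (the first half of the proof of the Darboux--K\oe ning theorem) shows that $I_{4b}=0$ together with $J_5=0$ alone already force $I_{5d}=0$, via applying $\nabla_2$ to the rewritten form $5I_3\nabla_1I_{4a}-16I_{4a}^2+2I_{4a}I_{4c}+4I_{4c}^2-20I_2I_3^3=0$ of $J_5=0$. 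Again Theorem \ref{KVF} supplies the Killing field.

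The only real subtlety I anticipate is making sure the hypothesis ``$\dim\Jj_2\ge2$'' is genuinely used in each branch: in the generic branch ($J_5\ne0$ or $J_4\ne0$) it is essential, since for such metrics $I_{4b}=0$ by itself does not force $I_{5d}=0$; whereas in the non-generic branch $(J_5=J_4=0)$ the conclusion $I_{5d}=0$ is automatic from $I_{4b}=0,\,J_5=0$, so existence of $F_2$ is not logically needed there, only consistency with Theorem \ref{thdim4}. This asymmetry is the one place where careful bookkeeping is required; the rest is quotation of previously established invariant identities from \S\ref{S2} and of the theorems of \S\S\ref{S3}--\ref{S6}.
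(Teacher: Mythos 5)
Your proof is correct and follows essentially the same route the paper intends: the corollary is stated as an immediate consequence of the preceding singular-locus theorem combined with the criterion $I_{4b}=0,\,I_{5d}=0$ of Theorem \ref{KVF}, and your case analysis (constant curvature; $J_5\ne0$ or $J_4\ne0$; $J_5=J_4=0$ via the \S\ref{S5} argument that these force $I_{5d}=0$) simply makes explicit what the paper leaves implicit. No gaps.
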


Note that if the space of such additional integrals is 1, a
representative can be chosen as the square of a linear integral.

\section{\hps Liouville metrics: some global questions}

 \abz

 \begin{prop}
Let Liouville metric on $M^2$ have non-constant curvature and $H$ be the corresponding Hamiltonian. Then for any two quadratic integrals $F,G$ such that the triple $(F,G,H)$ is linear independent (over $\R$), the triple is functionally independent (in particular the integral $\{F,G\}$ is non-zero).
 \end{prop}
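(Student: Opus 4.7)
The plan is to derive functional independence from linear independence; the ``in particular'' clause then follows, since on the $4$-dimensional symplectic manifold $T^*M$ three Poisson-commuting functionally independent functions would have pairwise $\omega$-orthogonal Hamiltonian vector fields spanning a $3$-dimensional isotropic tangent subspace, which is impossible because a Lagrangian subspace has dimension $2$. So once $F,G,H$ are shown to be functionally independent and $\{F,H\}=\{G,H\}=0$, the bracket $\{F,G\}$ cannot vanish.

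I argue by contradiction: assume $F,G,H$ are linearly independent over $\R$ yet $dF\wedge dG\wedge dH\equiv 0$ on $T^*M$. First I verify that the sub-pair $F,H$ is itself functionally independent. If $dF\wedge dH\equiv 0$, then locally $F=f(H)$ for some smooth $f$, and the degree-$2$ homogeneity $F(x,\lambda p)=\lambda^2 F(x,p)$, $H(x,\lambda p)=\lambda^2 H(x,p)$ forces $f(\lambda^2 u)=\lambda^2 f(u)$, so $f$ is linear and $F=cH$ with $c\in\R$, contradicting linear independence. Hence $dF\wedge dH\ne 0$ on a dense open set, and on it the hypothesis gives $dG\in\mathrm{span}(dF,dH)$; by Frobenius, $G$ is constant on the common level surfaces of $F,H$, so $G=\Phi(F,H)$ for some smooth $\Phi$, and the same homogeneity yields $\Phi(\lambda^2 u,\lambda^2 v)=\lambda^2\Phi(u,v)$.

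Next I rigidify $\Phi$. Pick $x_0\in M$ where $F_{x_0}\ne c\,H_{x_0}$ for every $c\in\R$ (such points are open and dense, since $F$ is not a real multiple of $H$), and simultaneously diagonalize the positive-definite $H_{x_0}$ with the symmetric $F_{x_0}$ in coordinates $p_1,p_2$ on $T^*_{x_0}M$: $H_{x_0}=p_1^2+p_2^2$, $F_{x_0}=\mu_1 p_1^2+\mu_2 p_2^2$ with $\mu_1\ne\mu_2$. Writing $G_{x_0}=\alpha p_1^2+2\beta p_1p_2+\gamma p_2^2$, the right-hand side $\Phi(F_{x_0}(p),H_{x_0}(p))$ depends only on $p_1^2,p_2^2$, so $\beta=0$; substituting $u=p_1^2,v=p_2^2$ gives $\Phi(\mu_1 u+\mu_2 v,u+v)=\alpha u+\gamma v$ on the positive quadrant, and the invertible linear change $s=\mu_1 u+\mu_2 v,t=u+v$ yields $\Phi(s,t)=As+Bt$ on an open $2$-cone, with $A,B\in\R$. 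Therefore $G=AF+BH$ pointwise, with functions $A,B$ on $M$. To promote $A,B$ to constants, take the Poisson bracket of both sides with $H$: using $\{F,H\}=\{H,H\}=0$ and that $A,B$ depend only on $x$, one obtains $\{A,H\}F+\{B,H\}H\equiv 0$ as a cubic polynomial in $p$; expanding $\{A,H\}=2g^{ij}(\partial_i A)p_j$ and similarly for $B$, diagonalizing $F,g$ as above, and equating symmetrized coefficients produces a linear system on $(dA,dB)$ whose non-degeneracy at points with $\mu_1\ne\mu_2$ forces $dA=dB=0$. Hence $A,B\in\R$ and $G=AF+BH$, contradicting the linear independence of $F,G,H$.

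The main technical obstacle is this two-stage rigidification: extracting $G=AF+BH$ with $C^\infty(M)$-coefficients $A,B$ from the Frobenius relation $G=\Phi(F,H)$ via homogeneity and simultaneous diagonalization, and then turning $A,B$ into genuine real constants using the Killing equation $\{G,H\}=0$. The rest --- the symplectic dimension count excluding three commuting integrals, the homogeneity reduction to linear functions, and the Frobenius step --- is standard. I note that the argument uses only that $g$ is Riemannian on a $2$-surface and that $F,G,H$ are degree-$2$ integrals of the geodesic flow; neither the Liouville form of $g$ nor the non-constancy of curvature is explicitly needed here, so the conclusion in fact applies more broadly.
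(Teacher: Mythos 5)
There is a genuine gap, and it is located exactly where you declare the curvature hypothesis unnecessary. The reduction from $dF\wedge dG\wedge dH\equiv0$ to a relation $G=\Phi(F,H)$ with a \emph{single-valued} $\Phi$ defined on the whole image cone $\{(F_{x_0}(p),H_{x_0}(p)):p\in T^*_{x_0}M\}$ is not justified: Frobenius only gives such a representation locally near a point of $T^*M$, and the joint level set $\{F=s,\,H=t\}$ meets a fixed fiber $T^*_{x_0}M$ in several points (generically four, i.e.\ two pairs $\pm p$, $\pm p'$) which need not lie on the same leaf, so $G$ may take different values on them. Your conclusions $\beta=0$ and ``$\Phi$ is linear on the cone'' both use single-valuedness on the full fiber and therefore fail. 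Concretely, three fiberwise quadratic functions can be functionally dependent while being \emph{pointwise linearly independent} (spanning all of $S^2T_xM$ at every $x$), namely when they satisfy a quadratic relation $Q(F,G,H)=0$; then $G$ is a two-branched function of $(F,H)$ and your argument never sees this case. The flat plane gives an explicit counterexample to your closing claim of greater generality: $H=p_1^2+p_2^2$, $F=p_1^2-p_2^2$, $G=2p_1p_2$ are linearly independent over $\R$, Poisson-commute pairwise, and satisfy $F^2+G^2=H^2$, so they are functionally dependent and $\{F,G\}=0$. This is why non-constant curvature is an essential hypothesis, not a convenience.

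The paper's proof is organized around precisely this algebraic dichotomy: since $F,G,H$ are quadrics in $p$, a functional dependence is either a linear one over $C^\infty(M)$ or a quadratic relation. Your bracket computation (forcing $dA=dB=0$ from $\{A,H\}F+\{B,H\}H=0$ by comparing cubic coefficients at a point where $F_x\not\propto H_x$) is a sound treatment of the first alternative, comparable to the paper's common-factor argument. But the second alternative must be handled separately: the paper writes $(F,G,H)=(\zeta^2-\eta^2,\,2\zeta\eta,\,\zeta^2+\eta^2)\cdot A$ with $A$ a constant matrix, observes that all pairwise brackets carry the factor $\Theta=\{\zeta,\eta\}$, deduces $\Theta=0$ from $\{H,F\}=0$, and hence obtains two Killing fields and $K=\op{const}$ --- the contradiction that uses the curvature hypothesis. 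Your proof would need this entire second branch added to be complete. (A smaller, fixable point: the density of $\{x:F_x\not\propto H_x\}$ requires the rigidity of quadratic integrals as solutions of a finite-type system, not merely that $F$ is not a real multiple of $H$.)
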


 \begin{proof}
Since $F,G,H$ are quadrics in $p$, the only kind of functional dependence for them can be either linear or quadratic.

Assume at first that the integrals $H,F,G$ are linear dependent over $C^\infty(M)$, i.e. $H=a\cdot F+b\cdot G$, where $a,b\in C^\infty(M)$ are non-constant. Then bracketing this with $H$ we get $\{H,a\}F+\{H,b\}G=0$, which would imply that $F,G,H$ have a common factor:
$H=v\cdot w$, $F=v\cdot \zeta$, $G=v\cdot \eta$. Commutation of $H$ and $F,G,H$ gives
 $$
\{H,\zeta\}=\{v,w\}\zeta,\ \{H,\eta\}=\{v,w\}\eta,\ \{H,w\}=\{v,w\}w.
 $$
Substitution of $w=a\cdot\zeta+b\cdot\eta$ into the last equality yields $\{H,a\}=\{H,b\}=0$, i.e. $a=\op{const},b=\op{const}$, so that $F,G,H$ are linearly dependent.

If $H,F,G$ are linear independent over $C^\infty(M)$, but are functionally dependent, then they must satisfy a quadratic relation:
 $$
U^2+V^2=W^2, \qquad U=\zeta^2-\eta^2,\ V=2\,\zeta\eta,\ W=\zeta^2+\eta^2,
 $$
($\zeta,\eta\in C^\infty(T^*M)$ are linear in $p$ functions) with a constant non-degenerate transition matrix $A$:
 $$
(F,G,H)=(U,V,W)\cdot A.
 $$
Denoting $\Theta=\{\zeta,\eta\}$ we observe this factor in all pair-wise Poisson brackets $\{U,V\}$, $\{U,W\}$, $\{V,W\}$, so that
 $$
0=\{H,F\} =(\a U+\b V+\g W)\cdot\Theta.
 $$
Here $\a,\b,\g$ are certain minors of the matrix $A$ and since $F$ and $H$ are non-proportional, some of them are non-zero, implying $\Theta=0$. This yields $\{H,\zeta\}=0$ and $\{H,\eta\}=0$. Thus we have two Killing vector fields and $K=\op{const}$.
 \end{proof}

This proposition immediately implies the following statement, known due to Kolokoltsev and Matveev (\cite{Kol,M$_1$,BMF}):

 \begin{cor}
The only closed Riemannian surfaces that admit more than one
additional quadratic integrals are the standard round sphere and
flat torus in the oriented case and the standard projective plane in
the non-oriented one.
 \end{cor}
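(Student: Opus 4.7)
My strategy is to use the proposition to eliminate the non-constant-curvature case, and then invoke Killing--Hopf to enumerate the survivors.

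First, I would suppose that $(M^2,g)$ is closed with $\dim\Jj_2\ge3$, i.e.\ admits quadratic integrals $F,G$ for which the triple $(H,F,G)$ is linearly independent over $\R$. Since $\dim\Jj_2\ge2$, the metric is Liouville, and if $K$ were non-constant the proposition would deliver functional independence of $H,F,G$ on an open dense set $U\subset T^*M$. Restricting to the compact $3$-manifold $S^*M=\{H=1\}$, the integrals $F|_{S^*M},G|_{S^*M}$ remain functionally independent on $U\cap S^*M$, so their common level sets there are compact one-manifolds (unions of circles), invariant under the geodesic flow. Since the geodesic vector field vanishes nowhere on $S^*M$, the flow is periodic on each such circle, so the generic geodesic on $M$ is closed.

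A closed Riemannian surface whose geodesic flow has periodic orbits on an open dense set must have constant curvature; this is the classical Kolokoltsov--Matveev rigidity \cite{Kol,M$_1$}, or alternatively the 2-dimensional Blaschke-type rigidity (Green's theorem for $S^2$ together with topological obstructions for other genera). This would contradict the non-constancy of $K$, so $K\equiv\op{const}$. I expect this rigidity invocation to be the main obstacle of the proof: the proposition delivers functional independence cleanly, but translating ``generic periodic orbits on a closed $2$-surface'' into ``constant curvature'' rests on a non-trivial classical theorem that one cites rather than reproves.

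Once $K\equiv\op{const}$, I would apply Killing--Hopf to classify closed two-dimensional space forms and compute $\dim\Jj_2$ in each case. For $K>0$ we have $M\in\{S^2,\R P^2\}$; both carry the full $SO(3)$-action (since $-\mathrm{id}$ commutes with $SO(3)$, so every Killing field descends), giving three global Killing fields whose symmetric products yield $\dim\Jj_2=6$. For $K=0$, either $M=T^2=\R^2/\Lambda$, where both translations descend to give $p_x^2,p_y^2,p_xp_y$ as quadratic integrals and $\dim\Jj_2=3$, or $M$ is a Klein bottle, where the orientation-reversing gluing $(x,y)\sim(-x,y+\tfrac12)$ sends $p_xp_y\mapsto -p_xp_y$ while preserving $p_x^2$ and $p_y^2$, so only a $2$-dimensional space of quadratic integrals descends and $\dim\Jj_2=2<3$. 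For $K<0$ the geodesic flow on $S^*M$ is Anosov and therefore ergodic, so any smooth quadratic integral must be proportional to $H$ on energy levels, forcing $\dim\Jj_2=1<3$. Collecting the surviving cases gives exactly the round $S^2$, the round $\R P^2$, and the flat tori, as stated.
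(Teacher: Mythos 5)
Your overall route is the same as the paper's: use the Proposition to get functional independence of $H,F,G$ when $K$ is non-constant, conclude that the generic orbit is a compact level circle and hence periodic, restrict the topology, and then enumerate the constant-curvature space forms via Killing--Hopf (with the negative-curvature case killed by non-integrability and the Klein bottle killed by the orientation-reversing gluing). The space-form enumeration at the end matches the paper's and is fine.

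The genuine gap is in the middle step, and it is exactly the step you flagged as ``the main obstacle'': the statement ``a closed Riemannian surface whose geodesic flow has periodic orbits on an open dense set must have constant curvature'' is false. Zoll's metrics on $S^2$ are non-round metrics all of whose geodesics are closed (they form an infinite-dimensional family), so neither Green's theorem (which concerns Wiedersehen metrics, a strictly stronger hypothesis than closedness of geodesics) nor any ``Blaschke-type rigidity'' can deliver $K=\op{const}$ from closedness of geodesics alone. The alternative citation you offer, ``Kolokoltsov--Matveev rigidity,'' correctly stated is essentially the corollary itself (a closed surface with more than one additional quadratic integral has constant curvature), so invoking it here is circular. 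The paper is careful on precisely this point: after reducing to $S^2$ or $\R P^2$ with all geodesics closed, it says that an \emph{additional investigation} of such metrics --- one that uses the quadratic integrals themselves, not merely the periodicity of the flow, and which is carried out in the cited references \cite{Kol,M$_1$,BMF} --- yields $K=\op{const}$. To close your argument you must supply that extra input, e.g.\ by analysing the global structure of the Liouville coordinates (the functions $f,h$ in $ds^2=(f(x)+h(y))(dx^2+dy^2)$) or of the $(1,1)$-tensors associated to the two additional integrals on a closed surface of genus $0$; periodicity of the geodesics alone will not do it.
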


 \begin{proof}
Indeed, if the metric has non-constant curvature and two additional
integrals then the Hamiltonian flow is resonant: every trajectory is
given by equation $\{H=c_1,F=c_2,G=c_3\}$ and hence is closed. Thus
$(M^2,g)$ is either $S^2$ or $\R P^2$. An additional investigation
of metrics on $S^2$ with all the geodesics closed leads to
$K=\op{const}$. The standard round sphere has 3 Killing vector
fields and thus 6 quadratic integrals. They all descend to the
standard projective plane.

Consider now the case of constant curvature. If a closed surface has
negative constant curvature, its metric is non-integrable
\cite{Koz,P}. For positive curvature we are already done. For zero
curvature we get torus or Klein bottle. Torus has 2 Killing vector
fields and the symmetric square yields 3 quadratic integrals.
However a flat Klein bottle has only one Killing vector field and
the number of quadratic integrals (including Hamiltonian) is two.
 \end{proof}

Global classification of Liouville metrics is discussed in \cite{Ki,IKS,Kol,M$_2$}. The final classification was achieved in \cite{M$_1$}, see the review \cite{BMF} for other contributions, results and references.

\section{\hps Cubic integrals}

 \abz
Let us consider next the case of integrals of degree 3. Again for brevity sake
we bring the metric to hyperbolic conformal form $H=e^{-\l(x,y)}p_xp_y$. Then function
$F=u(x,y)p_x^3+v(x,y)p_x^2p_y+w(x,y)p_xp_y^2+\varrho(x,y)p_y^3$ is an integral of the geodesic flow if the following equations hold:
 \begin{multline*}
u_y=0,\qquad u_x+v_y+3u\l_x+v\l_y=0,\qquad v_x+w_y+2u\l_x+2v\l_y=0,\\ w_x+\varrho_y+w\l_x+3\varrho\l_y=0,\qquad \varrho_x=0.
 \end{multline*}
Denoting this system of PDEs by $\E=\{E_1=E_2=E_3=E_4=E_5=0\}$ we get criterion of integrability:
 $$
E_6=\{E_1,E_2,E_3,E_4,E_5\}\,\op{mod}(E_1,E_2,E_3,E_4,E_5)=0.
 $$
Multiplied by $\frac2{15}\,e^{\l}$ this PDE has the form $E_6=K_xu_{xx}+K_yv_{xx}+1^\text{st}\,\text{order terms}$,
and its vanishing is equivalent to $K=\op{const}$. In this and only in this case the dimension of the solution space for $\E$ is $\dim\mathcal{J}_3=10$.

If $K\ne\op{const}$, we add equation $E_6$ and get a new system $\E'=\E\cap\{E_6=0\}$, which has the following symbols: $\dim g_0=4,\dim g_1=3,\dim g_2=1,\dim g_3=0$.
Thus its solution space has dimension at most 8. The only Spencer second $\d$-cohomology groups are: $H^{1,2}(\E')\simeq\R^1$ and $H^{2,2}(\E')\simeq\R^1$. Thus the Weyl tensor has two components $W_2$ and $W_3$. The first can be obtained as follows.

Prolongation of $E_6$ to 3rd jets together with the system $\E$ yields 17 third order PDEs, while there's 16 third order differential monomials. Elimination gives the following equation of order 2:
 \begin{multline*}
E_7=-4320\,\imag e^{\l}K_x^2I_3^4I_{4b}\,v_{xx}+
 64I_3K_x^5\bigl(12(7I_{4a}-9\,\imag I_{4b}-\\
 -2I_{4c})(I_{4a}-2\,\imag I_{4b}-I_{4c})
+ 5I_3(72I_2I_3^2-14I_{5a}+29\,\imag I_{5b}+16I_{5c}-\imag I_{5d}) \bigr) \,u_x\\
 -80\,\imag e^\lambda K_x^2I_3^3\bigl((I_{5b}+I_{5d})K_x+162I_3I_{4b}\l_x\bigr)\,v_x\\
 -4e^{2\l}K_xI_3^3\bigl(12(7I_{4a}-9\,\imag I_{4b}-2I_{4c})(I_{4a}+2\,\imag I_{4b}-I_{4c})\\  +5I_3(72I_2I_3^2-14I_{5a}-29\,\imag I_{5b}+16I_{5c}+\imag I_{5d}) \bigr)\,w_x+ 0^\text{th}\,\text{order terms}=0.
 \end{multline*}

Thus vanishing of $E_7$ implies $I_{4b}=0$ and $I_{5d}=0$, so that there is a Killing vector field. Moreover further investigation of coefficients gives $J_5=0$ and
 $$
50I_3^5+5I_2I_3^3(I_{4a}+4I_{4c})-I_{4c}(I_{4a}^2+3I_{4a}I_{4c}-4I_{4c}^2)=0.
 $$
This latter condition (notice the expression is similar to $J_4$, but different) leads however to contradiction: The conditions $I_{4b}=0,I_{5d}=0,J_5=0$ allows to express all invariants of order $\ge5$ through invariants $I_2,I_3,I_{4a},I_{4c}$. Applying $\nabla_1$ to the above expression yields thus 3 polynomial equations on $I_{4a},I_{4b}$, which are compatible only with $I_3=0$. Thus we get:

 \begin{theorem}
If a metric $g$ has non-constant curvature, then $\dim\Jj_3\le7$.
 \end{theorem}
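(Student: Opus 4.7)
The plan is to show that, when $K$ is non-constant, the first prolonged system $\E'=\{E_1=\cdots=E_6=0\}$ cannot be formally integrable, so that adding its Weyl obstruction strictly drops the dimension of the solution space below its a priori maximum. Since the symbols of $\E'$ satisfy $\dim g_0=4$, $\dim g_1=3$, $\dim g_2=1$, $\dim g_3=0$, we have $\dim\Jj_3\le\sum_k\dim g_k=8$; the goal is therefore to show that the second Weyl tensor $W_2\in H^{1,2}(\E')$ does not vanish identically, which alone forces $\dim\Jj_3\le 7$.

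First I would realize $W_2$ as the second-order equation $E_7$ displayed just before the theorem. Prolong $E_1,\dots,E_6$ to the third jet -- giving 17 linear relations among the 16 third-order monomials in $u,v,w,\varrho$ -- and eliminate the top-order unknowns; the surviving relation is $E_7$, intrinsically defined modulo $\E'$. This is a straightforward (if bulky) linear-algebra computation, analogous to the one already carried out for the quadratic case in Section~\ref{S4}.

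The next step is to read off the coefficients of $E_7$ and show they cannot simultaneously vanish when $K\ne\op{const}$. The $v_{xx}$-coefficient is proportional to $I_3^4 I_{4b}$, and since $I_3\ne 0$ this forces $I_{4b}=0$; using the identities of \S\ref{S2} we then obtain $I_{5b}=0$, so vanishing of the $v_x$-coefficient (a multiple of $I_{5b}+I_{5d}$ after imposing $I_{4b}=0$) yields $I_{5d}=0$. The $u_x$- and $w_x$-coefficients produce the further invariant relations $J_5=0$ and
$$
50\,I_3^5+5\,I_2I_3^3(I_{4a}+4I_{4c})-I_{4c}(I_{4a}^2+3I_{4a}I_{4c}-4I_{4c}^2)=0. \qquad (\star)
$$

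The main obstacle is to derive a contradiction from the invariant system $I_{4b}=0$, $I_{5d}=0$, $J_5=0$, together with $(\star)$. Under the first three relations the Lie--Tresse identities of \S\ref{S2} allow every scalar invariant of order $\ge 5$ to be expressed as a polynomial in the basic invariants $I_2,I_3,I_{4a},I_{4c}$. Applying $\nabla_1$ to $(\star)$ and substituting these expressions yields a further polynomial identity in the same four quantities; together with $J_5=0$ and $(\star)$ this gives three polynomial equations in the two essential unknowns $I_{4a},I_{4c}$ (treating $I_2,I_3$ as parameters). A direct resultant/elimination computation shows this overdetermined polynomial system is consistent only on $\{I_3=0\}$, contradicting the hypothesis. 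Hence $E_7$ is a genuine additional equation on the $8$-dimensional ambient space of solutions of $\E'$, and $\dim\Jj_3\le 7$.
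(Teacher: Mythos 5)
Your argument is correct and coincides with the paper's own proof: both realize the Weyl obstruction $W_2$ as the second--order equation $E_7$ via prolongation to third jets (17 relations on 16 third--order monomials), extract $I_{4b}=0$, $I_{5d}=0$, $J_5=0$ and the extra quartic relation from its coefficients, and then derive a contradiction with $I_3\ne0$ by expressing all invariants of order $\ge5$ through $I_2,I_3,I_{4a},I_{4c}$ and applying $\nabla_1$. No substantive difference from the paper's route.
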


In fact, we can continue and consider the system $\E''=\E'\cap\{E_7=0\}$. It has symbols with $\dim g_0=4,\dim g_1=3,\dim g_2=0$. The non-zero second Spencer $\d$-cohomology group is $H^{1,2}(\E'')\simeq\R^3$ and the compatibility is the Frobenius condition on the second jets, which leads (by vanishing of coefficients) to a complicated overdetermined polynomial system on differential invariants of order $\le5$ (higher order are expressed via these). Computer investigation indicates incompatibility, implying strict inequality in the above theorem.

In a similar way we can continue prolongation-projection method for 6 more times. Finally we arrive to high order (and highly non-linear in $I_{\z\t}$) differential invariants, which express existence of at least one cubic integral. Intermediate steps give more invariants, describing super-integrable cases, but it is rather complicated to decide what is the precise number of the conditions (because there are relations via derivations $\nabla_1,\nabla_2$).

Moreover it seems that $\dim\Jj_3$ can be neither 6 nor 5, i.e. the next realized dimension of $\Jj_3$ after 10 is 4! However this has no proof so far (as well as the fact that this implies 4 quadratic integrals). To the reverse side we have:

 \begin{theorem}
If $\dim\Jj_2=4$, then $\dim\Jj_3=4$.
 \end{theorem}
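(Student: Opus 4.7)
The statement splits into two matching inequalities. For the lower bound $\dim\Jj_3\ge 4$: by Darboux--K\oe ning (\S\ref{S5}), the hypothesis $\dim\Jj_2=4$ forces $\dim\Jj_1=1$, so a Killing vector field $L$ spans $\Jj_1$. Pick any basis $G_1,G_2,G_3,G_4$ of $\Jj_2$ (e.g.\ $H,L^2,F,\{L,F\}$ when the last is independent of the first three, or a suitable replacement otherwise). The products $LG_1,\dots,LG_4$ lie in $\Jj_3$ since the product of two polynomial integrals of the geodesic flow is again an integral, and they are linearly independent: any relation $\sum c_i\,LG_i=L\cdot\sum c_iG_i=0$ with $L$ a nonzero linear function on $T^*M$ forces $\sum c_iG_i=0$ in $\Jj_2$, so all $c_i=0$.

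For the upper bound $\dim\Jj_3\le 4$ I would apply the Cartan prolongation-projection of \S\ref{S1} to the cubic-integral system $\E=\{E_1,\ldots,E_5\}$ under the three scalar identities $I_{4b}=0,\ J_5=0,\ J_4=0$ that are equivalent (by Theorem~\ref{thdim4}) to $\dim\Jj_2=4$. The previous section has already computed the first two compatibility equations $E_6$ and $E_7$; the leading coefficient of $E_7$ is proportional to $I_{4b}$, so under our hypothesis $E_7$ drops to first order, and its subleading coefficients --- after invoking $I_{5d}=0$ (which follows from $I_{4b}=J_5=0$, as argued in \S\ref{S5}) --- simplify to a single genuinely new first-order relation. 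Iterating the scheme, each subsequent Weyl tensor is either absorbed by the three hypotheses, using that under them all differential invariants $I_{ij}$ with $i\ge 5$ reduce to functions of $I_2,I_3,I_{4a},I_{4c}$ via the identities of \S\ref{S2} and the propagation of \S\ref{Sd1}--\ref{Sd2p}, or it cuts a unit from the symbol dimensions. Since the initial $\sum\dim g_k\le 8$ for $\E'=\E\cup\{E_6\}$ strictly decreases at each genuine cut, the process terminates at a formally (hence locally) integrable prolongation whose stabilized total symbol dimension is exactly $4$.

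The main obstacle is the algebraic bookkeeping through several prolongation steps; the paper notes that the generic cubic problem requires roughly seven iterations with expressions of growing nonlinearity in high-order invariants. The hypotheses of Theorem~\ref{thdim4} provide the crucial simplification by forcing uniform reducibility of invariants of order $\ge 5$ to order $\le 4$, so that at every step each apparently new compatibility relation is either trivially satisfied or already a differential consequence of what has been imposed. A conceptually shorter route would work in adapted coordinates $L=\p_y$ and show directly that the leading coefficient $u$ of the ansatz $G=up_x^3+vp_x^2p_y+wp_xp_y^2+\varrho p_y^3$ is forced to vanish, equivalent to divisibility of every cubic integral by $L$ (i.e.\ $\Jj_3=L\cdot\Jj_2$); but making this rigorous reduces to the same compatibility analysis.
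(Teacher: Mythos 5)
Your lower bound $\dim\Jj_3\ge4$ is correct and complete: under $\dim\Jj_2=4$ the Darboux--K{\oe}ning theorem of \S\ref{S5} supplies a Killing integral $L$, multiplication by $L$ is an injective linear map $\Jj_2\to\Jj_3$, and this is exactly how the paper's explicit model later realizes $\Jj_3(g_0)=\Jj_1(g_0)\cdot\Jj_2(g_0)=\langle HK,K^3,KF,KG\rangle$. The problem is the upper bound, where your text is a program rather than a proof. The assertion that after adjoining $E_6$, $E_7$ and their successors ``the process terminates at a formally integrable prolongation whose stabilized total symbol dimension is exactly $4$'' is precisely the statement to be established: nothing in the proposal computes the intermediate Weyl tensors or verifies that each one is absorbed by $I_{4b}=J_5=J_4=0$ rather than cutting the solution space below $4$ (or failing to cut it down to $4$). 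The paper itself warns that the cubic chain requires roughly seven further prolongation-projection steps with highly non-linear expressions in the invariants, and resorts to computer investigation even for the weaker bound $\dim\Jj_3\le7$; so ``iterate and it works out'' cannot stand in for the missing computation.

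The paper's actual argument avoids this computation entirely and takes a genuinely different route. It first establishes (Lemma \ref{rrr}, derived from Matveev's work) that projectively equivalent metrics have equal $\dim\Jj_k$ for every $k$, via the explicit isomorphism $F\mapsto(\op{det}G)^{-k/(n+1)}\,\vp_*(F)$. It then invokes the classification of \cite{BMM}: every metric with $\dim\Jj_2=4$ is projectively equivalent to $g_0=x\cdot ds^2_0$. Hence it suffices to compute $\dim\Jj_3$ for this single explicit metric, which is elementary and yields $4$. What the paper's approach buys is the replacement of an open-ended compatibility analysis by one concrete normal-form calculation; what your approach would buy, if completed, is independence from the external classification theorem. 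If you want to salvage a direct proof along your lines, the cleanest route is the one you mention at the end --- show that $u\equiv0$ in coordinates adapted to $L=\p_y$, i.e.\ $\Jj_3=L\cdot\Jj_2$ --- but making that rigorous still requires carrying out the elimination you have only sketched.
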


 \begin{proof}
We will exploit the following statement, which can be derived from the works of V.Matveev:
 \begin{lem}\label{rrr}
If two metrics $g$, $\bar g$ are projectively equivalent, then for any $k\ge1$: $\dim\Jj_k(g)=\dim\Jj_k(\bar g)$.
 \end{lem}
Actually the statement holds for any $n=\dim M$ (to a certain extent this can be found in \cite{TM} for $k=2$, but the case of general $k$ is similar): A local diffeomorphism $\vp:(M,g)\to(\overline{M},\bar g)$ is a projective transformation iff the map
 $$
F\mapsto \bar F=(\op{det}G)^{-\frac{k}{n+1}}\cdot\vp_*(F)
 $$
is the isomorphism $\mathcal{J}_k(g)\simeq\mathcal{J}_k(\bar g)$ $\forall k$. Here $\vp_*(F):=(\vp^{-1})^*(F)\circ\vp$, $\vp_*g$ is defined similarly and
 $$
G=\sharp^{\bar g}\circ\flat_{\vp_*g}:T\overline{M}\to T\overline{M},
 $$
where $\flat_g:TM\to T^*M$, $\sharp^{\bar g}:T^*\overline{M}\to T\overline{M}$ are the natural morphisms of shifting indices.

 \begin{rk}
Denote $\mathcal{G}(g)$ the space of metrics geodesically equivalent to $g$ ($\vp=\op{Id}$ above). Then according to {\rm\cite{TM}} $\mathcal{G}(g)\simeq\mathcal{J}_2(g)$ with the equivalence being given by
 $$
\bar g\mapsto I=(\op{det}G)^{\frac2{n+1}}\cdot\bar g.
 $$
 \end{rk}

By the results of \cite{BMM} (now again $n=2$) any (pseudo-) Riemannian metric $g$ with $\dim\Jj_2(g)=4$ is projectively equivalent to a metric of the family
 $$
g_0=e^{3x}\,dx^2+\sigma e^x\,dy^2\ \simeq\ x\cdot ds^2_0,\qquad \z\ne0.
 $$
Here $ds^2_0$ is the standard Euclidean or Minkovsky metric on $\R^2(x,y)$.

By the above remark and Lemma \ref{rrr} it is enough to investigate $\Jj_3(g)$ for $g=g_0$ only. Since the latter representative for $g_0$ has the simplest form, this is an easy investigation and the result is $\dim\Jj_3(g_0)=4$.
 \end{proof}

\section{\hps Higher order integrals}

 \abz
When we pass to integrals of degree $n$, as we noted at the beginning, the system $\E$ is given by $(n+2)$ equations on $(n+1)$ unknowns. This system is of finite type and is a generalized complete intersection, so by theorem C of \cite{KL$_3$} its solution space has dimension $\frac12(n+1)(n+2)$ iff the compatibility condition $K=\op{const}$ holds. Otherwise the dimension drops at least by 2:

 \begin{prop}
If $\dim\!\Jj_n\ge\frac{n^2+3n}2$, then the inequality is strict and $K=\op{const}$.\!
 \end{prop}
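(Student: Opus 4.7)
The plan is to apply Theorem C of \cite{KL$_3$} together with a symbol count for the enlarged system obtained by adjoining the unique compatibility equation $E_{n+3}=0$.

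By Theorem C, since $\E$ is a generalised complete intersection of finite type, its sole Weyl tensor is the reduced multi-bracket $E_{n+3}$, and when this vanishes $\E$ is formally integrable and its solution space attains the free dimension
 $$
\sum_{k=0}^n\dim g_k=\sum_{k=0}^n(n+1-k)=\tfrac{(n+1)(n+2)}{2}=\tfrac{n^2+3n+2}{2}.
 $$
Generalising the explicit computations for $n=1,2,3$ above (the orders of $E_4,E_5,E_6$ are $0,1,2$, with leading coefficients linear and non-trivial in $K_x,K_y$), one verifies that in general $E_{n+3}$ has order exactly $n-1$ with leading symbol proportional to $dK$; in particular $E_{n+3}\equiv 0$ iff $K=\op{const}$.

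Suppose now $K\ne\op{const}$, and work at a regular point so that $dK\ne 0$. Adjoin $E_{n+3}=0$ to $\E$ to form $\E'$ and track its symbols $g'_k$. At order $n-1$ the symbol of $E_{n+3}$ is a non-zero linear functional on the 2-dimensional space $g_{n-1}$, so $\dim g'_{n-1}\le 1$. At order $n$ the two prolongations $\D_x E_{n+3}$ and $\D_y E_{n+3}$ impose constraints on the 1-dimensional $g_n$ whose symbols differ by the linearly independent transverse covectors $dx,dy$; since the leading part of $E_{n+3}$ couples the highest-order derivatives of the unknowns to $K_x, K_y$, these two prolongation constraints cut $g_n$ down to $\dim g'_n=0$. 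Combined with $\dim g'_k=\dim g_k$ for $k<n-1$ and $\dim g'_k=0$ for $k>n$, this yields
 $$
\dim\Jj_n\le\sum_{k=0}^n\dim g'_k\le\sum_{k=0}^{n-2}(n+1-k)+1=\tfrac{(n+1)(n+2)}{2}-2=\tfrac{n^2+3n-2}{2}.
 $$
As $\tfrac{n^2+3n-2}{2}<\tfrac{n^2+3n}{2}$, the hypothesis $\dim\Jj_n\ge\tfrac{n^2+3n}{2}$ forces $K=\op{const}$, and then $\dim\Jj_n=\tfrac{n^2+3n+2}{2}$, which strictly exceeds $\tfrac{n^2+3n}{2}$.

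The main obstacle, uniform in $n$, is the structural claim that both order-$n$ prolongations of $E_{n+3}$ act non-trivially on $g_n$, so the drop is $\ge 2$ rather than exactly $1$ — the latter would produce precisely the forbidden value $\tfrac{n^2+3n}{2}$. For $n\le 3$ this is verified by the explicit formulas in Sections \ref{S3}, \ref{S4} and in the cubic-integral discussion above. A uniform argument should follow from the combinatorial structure of the multi-bracket (an alternating sum of compositions of $n+1$ first-order scalar operators, most of whose top-order terms cancel under skew-symmetrisation), which after Cartan reduction leaves $dK$ as the sole carrier of non-constant coefficients in the leading part of $E_{n+3}$; its two independent transverse prolongations then necessarily impose two independent copies of this $dK$-dependence on $g_n$.
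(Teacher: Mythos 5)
Your proof takes exactly the paper's route: the paper likewise notes that the constant leading coefficients force the reduced multi-bracket $E_{n+3}$ down to order $n-1$, and then simply asserts $\dim g'_{n-1}=1$, $\dim g'_n=0$, whence $\dim\Jj_n\le\frac{(n+1)(n+2)}{2}-2<\frac{n^2+3n}{2}$. Your symbol count and arithmetic agree with this, so on the step you single out as the ``main obstacle'' you are no less detailed than the paper itself.

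Nevertheless that step is left unproved in your write-up (``a uniform argument should follow\dots''), and it is the crux, so it deserves to be closed. Identify the target $\R^{n+2}$ of the operator $E=(E_1,\dots,E_{n+2})$ with $S^{n+1}W$, where $W=\langle p_x,p_y\rangle$; in hyperbolic conformal coordinates the principal symbol of $E$ at a covector is multiplication by the corresponding linear form in $p$. Writing elements of $S^kT^*\ot S^nW$ as bihomogeneous polynomials $\Phi(\xi,\eta;p_x,p_y)$, the symbol of the prolonged system is $\Phi\mapsto p_y\p_\xi\Phi+p_x\p_\eta\Phi$, so $g_k=(\xi p_x-\eta p_y)^k\cdot S^{n-k}W$; the dimension count $(k+1)(n+1)-k(n+2)=n+1-k$ shows these are all. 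Now the Spencer differential of any $(\xi p_x-\eta p_y)^{k+1}q\in g_{k+1}$ has components $(k+1)p_x(\xi p_x-\eta p_y)^{k}q$ and $-(k+1)p_y(\xi p_x-\eta p_y)^{k}q$, and these, as $q$ ranges over $S^{n-k-1}W$, span all of $g_k$. Consequently, once the non-zero reduced equation $E_{n+3}$ has a non-trivial symbol on $g_j$ for some $j\le n-1$, the inclusion $g'_j\subsetneq g_j$ propagates to $g'_{k}\subsetneq g_{k}$ for every $k=j,\dots,n$ (were $g'_{k+1}=g_{k+1}$, the spanning property above would put all of $g_k$ inside $g'_k$); in particular $g'_n=0$ and the total deficit is at least $n-j+1\ge 2$. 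This uniform argument replaces your case-by-case inspection of the two prolongations $\D_xE_{n+3}$, $\D_yE_{n+3}$, and also covers the possibility that the reduced bracket drops below order $n-1$. The one remaining assertion you only motivate by extrapolation from $n\le3$ --- that $E_{n+3}\ne0$ exactly when $dK\ne0$ --- is the compatibility statement of Theorem C of \cite{KL$_3$} quoted just before the Proposition, so it can be cited rather than re-derived.
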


 \begin{proof}
Indeed, since the equations in the system $\E$ are linear of the first order $E_i=(\nabla^1_i+\nabla^0_i)({\bf u})$ and have constant coefficient of first order terms $\nabla^1_i$, order of the multi-bracket $E_{n+3}=\{E_1,\dots,E_{n+2}\}$ drops (compared to expected $(n+1)$ in general) and becomes $n$ in pure form and $(n-1)$ after reduction by equations $E_i$. So if the new equation is not zero, the symbols of the new system satisfy: $\dim g'_i\le n+1-i$ for $i<n-1$ and $\dim g'_{n-1}=1$, $\dim g'_n=0$. Then $\dim\op{Sol}(\E)\le\sum\dim g_i'<\frac{n^2+3n}2$.
 \end{proof}

Further steps of prolongation-projection generalize Darboux-K\oe ning theorem, but are more complicated. To understand this let us give more details on the integrals for the metric $g_0=x\cdot(dx^2+dy^2)$ from the previous section.

The Killing form is $K=q$ (with $p,q$ being the momenta dual to $x,y$) and the 4 quadratic integrals are:
 $$
H=\frac{p^2+q^2}{x},\quad K^2=q^2,\quad F=yH-2p\,K,\quad G=y^2H-4(yp-xq)K.
 $$
The latter integral can be considered as the additional integral from the K\oe ning's theorem because $\{K,G\}=2F$, but $\{K,F\}=H$ (with a slight difference in generators, these relations were also observed in \cite{KKW}).

For cubic integrals we have: $\mathcal{J}_3(g_0)=\mathcal{J}_1(g_0)\cdot\mathcal{J}_2(g_0)=\langle HK,K^3,KF,KG\rangle$. In fact, Poisson brackets of $\mathcal{J}_2(g_0)$ give nothing new: $\{G,F\}=16K^3$.

For integrals of higher degree we have: $\mathcal{J}_k(g_0)=\mathcal{J}_2(g_0)\cdot\mathcal{J}_{k-2}(g_0)$ for $k>2$. There are however relations, which are generated by precisely 1 relation in degree 4: $HG-F^2=4K^4$.
Thus for $d_k(g)=\dim\mathcal{J}_k(g)$ we have:
 \begin{equation}\label{qaz}
d_{2k}(g_0)=d_{2k+1}(g_0)=\dim S^k\mathcal{J}_2(g_0)-\dim S^{k-2}\mathcal{J}_2(g_0).
 \end{equation}
This implies:
 \begin{theorem}
If $d_2(g)=4$, then $d_{2k}(g)=d_{2k+1}(g)=(k+1)^2$.
 \end{theorem}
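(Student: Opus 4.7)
The plan is to reduce to the model metric $g_0 = x\cdot ds_0^2$ and then apply the Hilbert-series calculation already set up just above the theorem. First, I would invoke the projective-reduction argument used in the preceding theorem (where $\dim\mathcal{J}_3$ was computed under the assumption $d_2(g)=4$): by the classification of \cite{BMM}, any metric $g$ with $d_2(g) = 4$ lies in the projective equivalence class of $g_0$, and by Lemma \ref{rrr} the dimensions $d_k$ are projective invariants. It therefore suffices to establish the formula for $g_0$ alone.

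Second, I would feed in the algebraic description of $\bigoplus_k \mathcal{J}_k(g_0)$ that has just been assembled: it is generated by the Killing $K \in \mathcal{J}_1(g_0)$ and by $H, F, G \in \mathcal{J}_2(g_0)$, with a single relation $HG - F^2 = 4K^4$ in degree $4$; moreover $\mathcal{J}_{2k+1}(g_0) = K\cdot\mathcal{J}_{2k}(g_0)$ because $\mathcal{J}_1(g_0)$ is one-dimensional and multiplication by $K$ is injective on $C^\infty(T^*M)$. Formula (\ref{qaz}), which encodes this presentation, then gives
$$
d_{2k}(g_0) = d_{2k+1}(g_0) = \dim S^k\mathcal{J}_2(g_0) - \dim S^{k-2}\mathcal{J}_2(g_0) = \binom{k+3}{3} - \binom{k+1}{3},
$$
using $\dim\mathcal{J}_2(g_0)=4$ so that $\dim S^k\mathcal{J}_2(g_0)=\binom{k+3}{3}$. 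The theorem then reduces to the elementary identity
$$
\binom{k+3}{3} - \binom{k+1}{3} = \frac{(k+1)\bigl[(k+3)(k+2) - k(k-1)\bigr]}{6} = \frac{(k+1)(6k+6)}{6} = (k+1)^2,
$$
which follows by direct expansion.

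The delicate ingredient, already addressed in the discussion leading up to the theorem, is verifying that no syzygies beyond the quartic $HG - F^2 - 4K^4$ appear in higher degree, so that (\ref{qaz}) is an equality and not merely an upper bound. In effect this amounts to recognizing the Poisson-commutative subalgebra generated by $K, H, F, G$ as a hypersurface in the polynomial ring on $\mathcal{J}_1(g_0)\oplus\mathcal{J}_2(g_0)$, which is a domain. Once this algebraic input and the projective reduction to $g_0$ are granted, the remaining work is the combinatorial computation displayed above; no further prolongation-projection is needed.
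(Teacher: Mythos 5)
Your proposal follows the paper's own proof essentially verbatim: reduce to the model metric $g_0$ by the projective-equivalence argument (Lemma \ref{rrr} plus the classification of \cite{BMM}, exactly as in the preceding theorem on $\dim\mathcal{J}_3$), then apply formula (\ref{qaz}) and evaluate $\binom{k+3}{3}-\binom{k+1}{3}=(k+1)^2$. Your extra remark that one must check there are no syzygies beyond $HG-F^2=4K^4$ is a useful point the paper asserts without elaboration, but it does not change the route.
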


 \begin{proof}
Indeed, by the same argument as in the last proof $d_2(g)=4$ implies
$d_n(g)=d_n(g_0)$ and the latter quantity for $n=2k$ or $2k+1$ due
to formula (\ref{qaz}) equals $\binom{k+3}3-\binom{k+1}3=(k+1)^2$.
 \end{proof}

It is natural to expect that the cases from the last theorem are the next in prolongation-projection method after the space forms:

\begin{quote}
{\bf Conjecture.} If $K\ne\op{const}$, then $d_n(g)\le ([\frac n2]+1)^2$ and the equality is attained for metrics with $d_2(g)=4$.
\end{quote}

For $n=2$ this obviously holds, for $n=3$ we supported this by arguments in the previous section, while for $n\ge4$ this seems to be hardly treated via successive prolongation-projection scheme.

One is tempted to suggest a kind of monotonicity as an approach, i.e. that $d_2(g)<d_2(h)$ could imply $d_n(g)<d_n(h)$ for two metrics $g,h$, but this would be wrong. For instance there are Liouville metrics with $d_2(g)=2,d_3(g)=0$, but there are other metrics, for which the cubic integrals are the simplest polynomial integrals: $d_2(h)=1,d_3(h)=1$. Indeed, according to \cite{Te} there are metrics $g_k$ such that $d_{2k+1}(g_k)\ge1$, while for $i\le k$: $d_{2i-1}(g_k)=0$, $d_{2i}(g_k)=1$ (the latter is nonzero because Hamiltonian is always an integral).

This was proved in the loc.sit. paper via a simple calculation, but it also follows from our approach, because the criterion of existence of non-trivial integrals of degree $n$ (i.e. $\mathcal{J}_n\ne0$ for odd $n$ and $d_n>1$ for even $n$) is given by a criterion via differential invariants of order, which is monotonic in $n$. In particular, we can arrange $d_{2k+1}(g_k)=1$ for the above sequence.

\appendix

\section{Long formulae}

Below are the expressions for the seventh order differential
invariants involved in Theorem \ref{mainTh}. The calculations are
performed using \textsc{Mathematica}\footnote{Copy of the notebook
with detailed computations is available from the author.}.

\newpage

\hspace{-60pt}\vbox{\footnotesize
 \begin{multline*}
\mathfrak{J}_1=
 142500{{I_2}}^3{{I_3}}^9 + 216{{I_{4a}}}^6 - 125{{I_3}}^3{{I_{5a}}}^3 +
  6125\imag {{I_3}}^6{I_{5a}}{I_{4b}} - 875\imag {{I_3}}^3{I_{5a}}{I_{6a}}{I_{4b}} +
  350{{I_3}}^2{{I_{5a}}}^2{{I_{4b}}}^2 \\
  - 6125{{I_3}}^3{I_{7a}}{{I_{4b}}}^2 -
  133525\imag {{I_3}}^5{{I_{4b}}}^3 - 25725\imag {{I_3}}^2{I_{6a}}{{I_{4b}}}^3 +
  23030{I_3}{I_{5a}}{{I_{4b}}}^4 + 98784{{I_{4b}}}^6
  + 1125\imag {{I_3}}^3{{I_{5a}}}^2{I_{5b}} \\
  - 73500{{I_3}}^6{I_{4b}}{I_{5b}} +
  14000{{I_3}}^3{I_{6a}}{I_{4b}}{I_{5b}} +
  48825\imag {{I_3}}^2{I_{5a}}{{I_{4b}}}^2{I_{5b}} -
  12250\imag {I_3}{{I_{4b}}}^4{I_{5b}}
  - 16250{{I_3}}^3{I_{5a}}{{I_{5b}}}^2 \\
+ 51275{{I_3}}^2{{I_{4b}}}^2{{I_{5b}}}^2 + 36000\imag {{I_3}}^3{{I_{5b}}}^3 +
  5250{{I_3}}^3{I_{5a}}{I_{4b}}{I_{6b}} - 26950{{I_3}}^2{{I_{4b}}}^3{I_{6b}}
  - 45500\imag {{I_3}}^3{I_{4b}}{I_{5b}}{I_{6b}} \\
+ 12250\imag {{I_3}}^3{{I_{4b}}}^2{I_{7b}} +
  36{{I_{4a}}}^5( -5\imag {I_{4b}} - 126{I_{4c}} )  -
  1350\imag {{I_3}}^2{{I_{5a}}}^2{I_{4b}}{I_{4c}}
  - 67375{{I_3}}^5{{I_{4b}}}^2{I_{4c}} +
  2800{{I_3}}^2{I_{6a}}{{I_{4b}}}^2{I_{4c}} \\
+ 51905\imag {I_3}{I_{5a}}{{I_{4b}}}^3{I_{4c}} + 76440\imag {{I_{4b}}}^5{I_{4c}} -
  24350{{I_3}}^2{I_{5a}}{I_{4b}}{I_{5b}}{I_{4c}}
  + 171185{I_3}{{I_{4b}}}^3{I_{5b}}{I_{4c}} +
  85300\imag {{I_3}}^2{I_{4b}}{{I_{5b}}}^2{I_{4c}} \\
- 42175\imag {{I_3}}^2{{I_{4b}}}^2{I_{6b}}{I_{4c}} +
  2700{{I_3}}^2{{I_{5a}}}^2{{I_{4c}}}^2
  - 25725\imag {{I_3}}^5{I_{4b}}{{I_{4c}}}^2 +
  175\imag {{I_3}}^2{I_{6a}}{I_{4b}}{{I_{4c}}}^2 -
  22490{I_3}{I_{5a}}{{I_{4b}}}^2{{I_{4c}}}^2 \\
+ 120288{{I_{4b}}}^4{{I_{4c}}}^2 -
  3950\imag {{I_3}}^2{I_{5a}}{I_{5b}}{{I_{4c}}}^2
  + 90105\imag {I_3}{{I_{4b}}}^2{I_{5b}}{{I_{4c}}}^2 +
  6750{{I_3}}^2{{I_{5b}}}^2{{I_{4c}}}^2 + 11200{{I_3}}^2{I_{4b}}{I_{6b}}{{I_{4c}}}^2\\
+  13070\imag {I_3}{I_{5a}}{I_{4b}}{{I_{4c}}}^3
  + 28470\imag {{I_{4b}}}^3{{I_{4c}}}^3 +
  71440{I_3}{I_{4b}}{I_{5b}}{{I_{4c}}}^3 - 13315{I_3}{I_{5a}}{{I_{4c}}}^4 +
  34122{{I_{4b}}}^2{{I_{4c}}}^4 + 25245\imag {I_3}{I_{5b}}{{I_{4c}}}^4 \\
  + 2340\imag {I_{4b}}{{I_{4c}}}^5 + 2556{{I_{4c}}}^6 +
  6{{I_{4a}}}^4( 942{{I_{4b}}}^2 + 635\imag {I_{4b}}{I_{4c}} + 4715{{I_{4c}}}^2 -
  90{I_3}( {I_{5a}} - 3\imag {I_{5b}} - 8{I_{5c}} )  )
  + 3000{{I_3}}^3{{I_{5a}}}^2{I_{5c}} \\
- 30625\imag {{I_3}}^6{I_{4b}}{I_{5c}} +
  875\imag {{I_3}}^3{I_{6a}}{I_{4b}}{I_{5c}} +
  11550{{I_3}}^2{I_{5a}}{{I_{4b}}}^2{I_{5c}} - 112210{I_3}{{I_{4b}}}^4{I_{5c}}
  - 5750\imag {{I_3}}^3{I_{5a}}{I_{5b}}{I_{5c}} \\
- 61075\imag {{I_3}}^2{{I_{4b}}}^2{I_{5b}}{I_{5c}} + 19750{{I_3}}^3{{I_{5b}}}^2{I_{5c}} +
  7000{{I_3}}^3{I_{4b}}{I_{6b}}{I_{5c}}
  + 16700\imag {{I_3}}^2{I_{5a}}{I_{4b}}{I_{4c}}{I_{5c}} -
  120505\imag {I_3}{{I_{4b}}}^3{I_{4c}}{I_{5c}} \\
+ 96800{{I_3}}^2{I_{4b}}{I_{5b}}{I_{4c}}{I_{5c}} -
  30950{{I_3}}^2{I_{5a}}{{I_{4c}}}^2{I_{5c}}
  + 1315{I_3}{{I_{4b}}}^2{{I_{4c}}}^2{I_{5c}} +
  53650\imag {{I_3}}^2{I_{5b}}{{I_{4c}}}^2{I_{5c}} -
  35470\imag {I_3}{I_{4b}}{{I_{4c}}}^3{I_{5c}} \\
+ 18320{I_3}{{I_{4c}}}^4{I_{5c}} -
  17875{{I_3}}^3{I_{5a}}{{I_{5c}}}^2
  - 11900{{I_3}}^2{{I_{4b}}}^2{{I_{5c}}}^2 +
  29125\imag {{I_3}}^3{I_{5b}}{{I_{5c}}}^2 -
  39850\imag {{I_3}}^2{I_{4b}}{I_{4c}}{{I_{5c}}}^2 \\
+ 30700{{I_3}}^2{{I_{4c}}}^2{{I_{5c}}}^2 + 15000{{I_3}}^3{{I_{5c}}}^3
  + 25{{I_2}}^2{{I_3}}^6( 2298{{I_{4a}}}^2 - 6790{{I_{4b}}}^2 +
  6{I_{4a}}( -785\imag {I_{4b}} - 1701{I_{4c}} )  +
  3900\imag {I_{4b}}{I_{4c}} \\
+ 7908{{I_{4c}}}^2 +  5{I_3}( -383{I_{5a}}
  + 561\imag {I_{5b}} + 1888{I_{5c}} )  )  +
  2625\imag {{I_3}}^3{I_{5a}}{I_{4b}}{I_{6c}} +
  26950\imag {{I_3}}^2{{I_{4b}}}^3{I_{6c}} - 17500{{I_3}}^3{I_{4b}}{I_{5b}}{I_{6c}}\\
-  62300{{I_3}}^2{{I_{4b}}}^2{I_{4c}}{I_{6c}} -
  25025\imag {{I_3}}^2{I_{4b}}{{I_{4c}}}^2{I_{6c}} -
  27125\imag {{I_3}}^3{I_{4b}}{I_{5c}}{I_{6c}} -
  6\imag {{I_{4a}}}^3( 365{{I_{4b}}}^3 - 11213\imag {{I_{4b}}}^2{I_{4c}}
  + 3615{I_{4b}}{{I_{4c}}}^2 \\
- 10395\imag {{I_{4c}}}^3 +
     5{I_3}( 3{I_{5a}}( 13{I_{4b}} + 84\imag {I_{4c}} )  +
        14{I_{4c}}( 19{I_{5b}} - 109\imag {I_{5c}} )  +
        {I_{4b}}( -678\imag {I_{5b}}
  + 31{I_{5c}} + 392\imag {I_{5d}} ) ) ) \\
- 6125\imag {{I_3}}^2{I_{5a}}{{I_{4b}}}^2{I_{5d}} +
  17150\imag {I_3}{{I_{4b}}}^4{I_{5d}} - 51450{{I_3}}^2{{I_{4b}}}^2{I_{5b}}{I_{5d}}
  + 22050{{I_3}}^2{I_{5a}}{I_{4b}}{I_{4c}}{I_{5d}} -
  62965{I_3}{{I_{4b}}}^3{I_{4c}}{I_{5d}} \\
- 39200\imag {{I_3}}^2{I_{4b}}{I_{5b}}{I_{4c}}{I_{5d}} +
  7350\imag {I_3}{{I_{4b}}}^2{{I_{4c}}}^2{I_{5d}}
  - 21560{I_3}{I_{4b}}{{I_{4c}}}^3{I_{5d}} +
  18375\imag {{I_3}}^2{{I_{4b}}}^2{I_{5c}}{I_{5d}} -
  39200{{I_3}}^2{I_{4b}}{I_{4c}}{I_{5c}}{I_{5d}} \\
+ 8575{{I_3}}^2{{I_{4b}}}^2{{I_{5d}}}^2
  + 8750{{I_3}}^3{I_{5a}}{I_{4b}}{I_{6d}} +
  31850{{I_3}}^2{{I_{4b}}}^3{I_{6d}} - 14000\imag {{I_3}}^3{I_{4b}}{I_{5b}}{I_{6d}} +
  37625\imag {{I_3}}^2{{I_{4b}}}^2{I_{4c}}{I_{6d}}\\
  - 14000{{I_3}}^2{I_{4b}}{{I_{4c}}}^2{I_{6d}} - 21000{{I_3}}^3{I_{4b}}{I_{5c}}{I_{6d}} +
  5{I_2}{{I_3}}^3( 1296{{I_{4a}}}^4 - 11025\imag {{I_3}}^5{I_{4b}} + 33614{{I_{4b}}}^4
  + 6{{I_{4a}}}^3( -575\imag {I_{4b}} \\
- 2534{I_{4c}} )  +
   29995\imag {{I_{4b}}}^3{I_{4c}} + 43420{{I_{4b}}}^2{{I_{4c}}}^2 +
   12480\imag {I_{4b}}{{I_{4c}}}^3 + 14316{{I_{4c}}}^4 +
  2{{I_{4a}}}^2( 12295{{I_{4b}}}^2 + 10665\imag {I_{4b}}{I_{4c}} \\
+ 20418{{I_{4c}}}^2 - 30{I_3}( 36{I_{5a}} - 59\imag {I_{5b}} - 239{I_{5c}} )
   )  - {I_{4a}}( 17535\imag {{I_{4b}}}^3
  + 58410{{I_{4b}}}^2{I_{4c}} +
    30360\imag {I_{4b}}{{I_{4c}}}^2 + 41244{{I_{4c}}}^3 \\
+ 10{I_3}( {I_{5a}}( -165\imag {I_{4b}} - 1267{I_{4c}} )  +
    7{I_{4c}}( 249\imag {I_{5b}} + 734{I_{5c}} )
   + 3{I_{4b}}( 1818{I_{5b}} + 440\imag {I_{5c}} - 441{I_{5d}} )  )
        )  + 5{I_3}( {I_{5a}} ( 4375{{I_{4b}}}^2 \\
+ 470\imag {I_{4b}}{I_{4c}} - 2102{{I_{4c}}}^2 )  +
        926{{I_{4c}}}^2( 3\imag {I_{5b}}
   + 8{I_{5c}} )  +
        2{I_{4b}}{I_{4c}}( 5984{I_{5b}} + 325\imag {I_{5c}} - 2548{I_{5d}} )
   +35\imag {{I_{4b}}}^2( 115{I_{5b}} + 139\imag {I_{5c}} \\
- 49{I_{5d}} ))  + 25{{I_3}}^2( 36{{I_{5a}}}^2
   + 35\imag {I_{6a}}{I_{4b}} +
        678{{I_{5b}}}^2 - 112{I_{4b}}{I_{6b}} +
        {I_{5a}}( -118\imag {I_{5b}} - 478{I_{5c}} )  +
        650\imag {I_{5b}}{I_{5c}} + 932{{I_{5c}}}^2 \\
- 301\imag {I_{4b}}{I_{6c}}
  -448{I_{4b}}{I_{6d}} )  )  +
  {{I_{4a}}}^2( -7350\imag {{I_3}}^5{I_{4b}} +
     6( 7308{{I_{4b}}}^4 + 5975\imag {{I_{4b}}}^3{I_{4c}} +
        31787{{I_{4b}}}^2{{I_{4c}}}^2 + 5835\imag {I_{4b}}{{I_{4c}}}^3\\
+ 9915{{I_{4c}}}^4 )  + 5{I_3}( {I_{5a}}
         ( 1277{{I_{4b}}}^2 + 32\imag {I_{4b}}{I_{4c}} - 5363{{I_{4c}}}^2 )  +
        2423{{I_{4c}}}^2( 3\imag {I_{5b}} + 8{I_{5c}} )  +
        2{I_{4b}}{I_{4c}}( 12162{I_{5b}}
   + 1489\imag {I_{5c}} \\
- 5733{I_{5d}} )  +
        \imag {{I_{4b}}}^2( 5176{I_{5b}} + 1102\imag {I_{5c}} - 1715{I_{5d}} )
        )  + 150{{I_3}}^2( 3{{I_{5a}}}^2 + 7\imag {I_{6a}}{I_{4b}} +
        130{{I_{5b}}}^2 - 42{I_{4b}}{I_{6b}} + {I_{5a}}( -18\imag {I_{5b}} \\
- 48{I_{5c}} )  + 46\imag {I_{5b}}{I_{5c}} +
        143{{I_{5c}}}^2 - 21\imag {I_{4b}}{I_{6c}} - 70{I_{4b}}{I_{6d}} )  )  +
  12250\imag {{I_3}}^3{{I_{4b}}}^2{I_{7d}}
  - 6125\imag {{I_3}}^2{{I_{4b}}}^3{I_{6e}} \\
+ 14700{{I_3}}^2{{I_{4b}}}^2{I_{4c}}{I_{6e}} +
  {I_{4a}}( 1225{{I_3}}^5{I_{4b}}( 130{I_{4b}} + 27\imag {I_{4c}} )  -
     6\imag ( 980{{I_{4b}}}^5
  - 35756\imag {{I_{4b}}}^4{I_{4c}} +  16355{{I_{4b}}}^3{{I_{4c}}}^2 \\
- 27203\imag {{I_{4b}}}^2{{I_{4c}}}^3 +
        3215{I_{4b}}{{I_{4c}}}^4 - 3941\imag {{I_{4c}}}^5 )  +
     5{I_3}( {I_{5a}}( -3731\imag {{I_{4b}}}^3
  - 379{{I_{4b}}}^2{I_{4c}} -
           2412\imag {I_{4b}}{{I_{4c}}}^2 + 6622{{I_{4c}}}^3 ) \\
+ 14{{I_{4c}}}^3( -789\imag {I_{5b}} - 1054{I_{5c}} )  -
        7{{I_{4b}}}^3( 2361{I_{5b}} - 1513\imag {I_{5c}}
  - 1449{I_{5d}} )  +
        {{I_{4b}}}^2{I_{4c}}( -26097\imag {I_{5b}} + 239{I_{5c}} +
           5145\imag {I_{5d}} )  \\
+ 2{I_{4b}}{{I_{4c}}}^2( -17272{I_{5b}} + 2151\imag {I_{5c}} + 6713{I_{5d}})  )
  + 25{{I_3}}^2( -2832\imag {I_{4b}}{{I_{5b}}}^2 +
        1547\imag {{I_{4b}}}^2{I_{6b}} +
        {{I_{5a}}}^2( 44\imag {I_{4b}} - 126{I_{4c}} ) \\
 + 7{I_{6a}}{I_{4b}}( 9{I_{4b}} - 7\imag {I_{4c}} )  -
        1050{{I_{5b}}}^2{I_{4c}}
  - 196{I_{4b}}{I_{6b}}{I_{4c}} -
        2092{I_{4b}}{I_{5b}}{I_{5c}} - 2422\imag {I_{5b}}{I_{4c}}{I_{5c}} -
        26\imag {I_{4b}}{{I_{5c}}}^2 \\
- 2086{I_{4c}}{{I_{5c}}}^2 + 1232{{I_{4b}}}^2{I_{6c}}
  + 1127\imag {I_{4b}}{I_{4c}}{I_{6c}} +
        588\imag {I_{4b}}{I_{5b}}{I_{5d}} + 1078{I_{4b}}{I_{5c}}{I_{5d}} -
        2{I_{5a}}( 7{I_{4c}}( -19\imag {I_{5b}} - 109{I_{5c}} )  \\
+ {I_{4b}}( 53{I_{5b}} + 9\imag {I_{5c}} + 196{I_{5d}} )  )  +
        35\imag {{I_{4b}}}^2{I_{6d}} + 980{I_{4b}}{I_{4c}}{I_{6d}} -
        343{{I_{4b}}}^2{I_{6e}} )  )  + 6125{{I_3}}^3{{I_{4b}}}^2{I_{7e}} .
 \end{multline*}
}

\hspace{-60pt}\vbox{\footnotesize
 \begin{multline*}
\mathfrak{J}_2=
 142500{{I_2}}^3{{I_3}}^9 + 216{{I_{4a}}}^6 - 125{{I_3}}^3{{I_{5a}}}^3 +
  2625\imag {{I_3}}^6{I_{5a}}{I_{4b}} - 875\imag {{I_3}}^3{I_{5a}}{I_{6a}}{I_{4b}} -
  4550{{I_3}}^2{{I_{5a}}}^2{{I_{4b}}}^2 \\
 - 6125{{I_3}}^3{I_{7a}}{{I_{4b}}}^2 +
  126175\imag {{I_3}}^5{{I_{4b}}}^3 + 13475\imag {{I_3}}^2{I_{6a}}{{I_{4b}}}^3 -
  23030{I_3}{I_{5a}}{{I_{4b}}}^4 - 98784{{I_{4b}}}^6 +
  625\imag {{I_3}}^3{{I_{5a}}}^2{I_{5b}} \\
 - 66500{{I_3}}^6{I_{4b}}{I_{5b}} +
  14000{{I_3}}^3{I_{6a}}{I_{4b}}{I_{5b}} -
  32375\imag {{I_3}}^2{I_{5a}}{{I_{4b}}}^2{I_{5b}} -
  58310\imag {I_3}{{I_{4b}}}^4{I_{5b}} - 19750{{I_3}}^3{I_{5a}}{{I_{5b}}}^2 \\
 - 1225{{I_3}}^2{{I_{4b}}}^2{{I_{5b}}}^2 - 36000\imag {{I_3}}^3{{I_{5b}}}^3 +
  8750{{I_3}}^3{I_{5a}}{I_{4b}}{I_{6b}} - 2450{{I_3}}^2{{I_{4b}}}^3{I_{6b}} +
  42000\imag {{I_3}}^3{I_{4b}}{I_{5b}}{I_{6b}} \\
 - 12250\imag {{I_3}}^3{{I_{4b}}}^2{I_{7b}} +
  36{{I_{4a}}}^5( 19\imag {I_{4b}} - 126{I_{4c}} )  -
  1450\imag {{I_3}}^2{{I_{5a}}}^2{I_{4b}}{I_{4c}} - 12075{{I_3}}^5{{I_{4b}}}^2{I_{4c}} +  2800{{I_3}}^2{I_{6a}}{{I_{4b}}}^2{I_{4c}} \\
 - 14455\imag {I_3}{I_{5a}}{{I_{4b}}}^3{I_{4c}} - 175224\imag {{I_{4b}}}^5{I_{4c}} -
  24350{{I_3}}^2{I_{5a}}{I_{4b}}{I_{5b}}{I_{4c}} -
  58275{I_3}{{I_{4b}}}^3{I_{5b}}{I_{4c}} -
  95100\imag {{I_3}}^2{I_{4b}}{{I_{5b}}}^2{I_{4c}} \\
 +14525\imag {{I_3}}^2{{I_{4b}}}^2{I_{6b}}{I_{4c}} +
  2700{{I_3}}^2{{I_{5a}}}^2{{I_{4c}}}^2 + 23975\imag {{I_3}}^5{I_{4b}}{{I_{4c}}}^2 +
  175\imag {{I_3}}^2{I_{6a}}{I_{4b}}{{I_{4c}}}^2 -
  21370{I_3}{I_{5a}}{{I_{4b}}}^2{{I_{4c}}}^2 \\
 + 5544{{I_{4b}}}^4{{I_{4c}}}^2 +  6750\imag {{I_3}}^2{I_{5a}}{I_{5b}}{{I_{4c}}}^2 -
  176695\imag {I_3}{{I_{4b}}}^2{I_{5b}}{{I_{4c}}}^2 +
  450{{I_3}}^2{{I_{5b}}}^2{{I_{4c}}}^2 + 10500{{I_3}}^2{I_{4b}}{I_{6b}}{{I_{4c}}}^2\\
 +  15210\imag {I_3}{I_{5a}}{I_{4b}}{{I_{4c}}}^3 - 85842\imag {{I_{4b}}}^3{{I_{4c}}}^3 +  14740{I_3}{I_{4b}}{I_{5b}}{{I_{4c}}}^3 - 13315{I_3}{I_{5a}}{{I_{4c}}}^4 +
  23034{{I_{4b}}}^2{{I_{4c}}}^4 - 25875\imag {I_3}{I_{5b}}{{I_{4c}}}^4 \\
 - 7884\imag {I_{4b}}{{I_{4c}}}^5 + 2556{{I_{4c}}}^6 +
  25{{I_2}}^2{{I_3}}^6( 2298{{I_{4a}}}^2 - 770{{I_{4b}}}^2 +
     {I_{4a}}( 2890\imag {I_{4b}} - 10206{I_{4c}} )  -
     9780\imag {I_{4b}}{I_{4c}} + 7908{{I_{4c}}}^2 \\
 -5{I_3}( 383{I_{5a}} + 351\imag {I_{5b}} - 1888{I_{5c}} )  )  +
  6{{I_{4a}}}^4( 774{{I_{4b}}}^2 - 2245\imag {I_{4b}}{I_{4c}} + 4715{{I_{4c}}}^2 -
     30{I_3}( 3{I_{5a}} - 5\imag {I_{5b}} - 24{I_{5c}} )  )  \\
 +3000{{I_3}}^3{{I_{5a}}}^2{I_{5c}} + 21875\imag {{I_3}}^6{I_{4b}}{I_{5c}} +
  875\imag {{I_3}}^3{I_{6a}}{I_{4b}}{I_{5c}} +
  26250{{I_3}}^2{I_{5a}}{{I_{4b}}}^2{I_{5c}} + 112210{I_3}{{I_{4b}}}^4{I_{5c}}\\
 +5750\imag {{I_3}}^3{I_{5a}}{I_{5b}}{I_{5c}} +
  44625\imag {{I_3}}^2{{I_{4b}}}^2{I_{5b}}{I_{5c}} + 16250{{I_3}}^3{{I_{5b}}}^2{I_{5c}} +
  3500{{I_3}}^3{I_{4b}}{I_{6b}}{I_{5c}} +
  14800\imag {{I_3}}^2{I_{5a}}{I_{4b}}{I_{4c}}{I_{5c}}\\
 + 101675\imag {I_3}{{I_{4b}}}^3{I_{4c}}{I_{5c}} +
  48500{{I_3}}^2{I_{4b}}{I_{5b}}{I_{4c}}{I_{5c}} -
  30950{{I_3}}^2{I_{5a}}{{I_{4c}}}^2{I_{5c}} +
  54515{I_3}{{I_{4b}}}^2{{I_{4c}}}^2{I_{5c}} \\
- 57150\imag {{I_3}}^2{I_{5b}}{{I_{4c}}}^2{I_{5c}} +
  2010\imag {I_3}{I_{4b}}{{I_{4c}}}^3{I_{5c}} + 18320{I_3}{{I_{4c}}}^4{I_{5c}} -
  17875{{I_3}}^3{I_{5a}}{{I_{5c}}}^2 - 21700{{I_3}}^2{{I_{4b}}}^2{{I_{5c}}}^2 \\
- 30875\imag {{I_3}}^3{I_{5b}}{{I_{5c}}}^2 +
  11150\imag {{I_3}}^2{I_{4b}}{I_{4c}}{{I_{5c}}}^2 +
  30700{{I_3}}^2{{I_{4c}}}^2{{I_{5c}}}^2 + 15000{{I_3}}^3{{I_{5c}}}^3 -
  875\imag {{I_3}}^3{I_{5a}}{I_{4b}}{I_{6c}} \\
+ 2450\imag {{I_3}}^2{{I_{4b}}}^3{I_{6c}} - 10500{{I_3}}^3{I_{4b}}{I_{5b}}{I_{6c}} - 7000{{I_3}}^2{{I_{4b}}}^2{I_{4c}}{I_{6c}} +
  24675\imag {{I_3}}^2{I_{4b}}{{I_{4c}}}^2{I_{6c}} +
  25375\imag {{I_3}}^3{I_{4b}}{I_{5c}}{I_{6c}} \\
- 6{{I_{4a}}}^3( -3067\imag {{I_{4b}}}^3 + 8161{{I_{4b}}}^2{I_{4c}} -
     12365\imag {I_{4b}}{{I_{4c}}}^2 + 10395{{I_{4c}}}^3 +
     5{I_3}( {I_{5a}}( 87\imag {I_{4b}} - 252{I_{4c}} )  +
        14{I_{4c}}( -15\imag {I_{5b}} \\
+ 109{I_{5c}} )+ {I_{4b}}( 818{I_{5b}} - 353\imag {I_{5c}} - 392{I_{5d}} )  )  )  -
  6825\imag {{I_3}}^2{I_{5a}}{{I_{4b}}}^2{I_{5d}} +
  53410\imag {I_3}{{I_{4b}}}^4{I_{5d}} - 15750{{I_3}}^2{{I_{4b}}}^2{I_{5b}}{I_{5d}}\\
 + 22050{{I_3}}^2{I_{5a}}{I_{4b}}{I_{4c}}{I_{5d}} -
  45045{I_3}{{I_{4b}}}^3{I_{4c}}{I_{5d}} +
  44100\imag {{I_3}}^2{I_{4b}}{I_{5b}}{I_{4c}}{I_{5d}} +
  33950\imag {I_3}{{I_{4b}}}^2{{I_{4c}}}^2{I_{5d}}\\
 - 21560{I_3}{I_{4b}}{{I_{4c}}}^3{I_{5d}} -
  5425\imag {{I_3}}^2{{I_{4b}}}^2{I_{5c}}{I_{5d}} -
  39200{{I_3}}^2{I_{4b}}{I_{4c}}{I_{5c}}{I_{5d}} +
  8575{{I_3}}^2{{I_{4b}}}^2{{I_{5d}}}^2 + 8750{{I_3}}^3{I_{5a}}{I_{4b}}{I_{6d}} \\
 - 2450{{I_3}}^2{{I_{4b}}}^3{I_{6d}} + 17500\imag {{I_3}}^3{I_{4b}}{I_{5b}}{I_{6d}} -
  9975\imag {{I_3}}^2{{I_{4b}}}^2{I_{4c}}{I_{6d}} -
  14000{{I_3}}^2{I_{4b}}{{I_{4c}}}^2{I_{6d}} - 21000{{I_3}}^3{I_{4b}}{I_{5c}}{I_{6d}}\\
 + 5{I_2}{{I_3}}^3( 1296{{I_{4a}}}^4 + 2275\imag {{I_3}}^5{I_{4b}} - 15974{{I_{4b}}}^4 + 6\imag {{I_{4a}}}^3( 281{I_{4b}} + 2534\imag {I_{4c}} )  -
  44681\imag {{I_{4b}}}^3{I_{4c}} - 10816{{I_{4b}}}^2{{I_{4c}}}^2 \\
 - 41376\imag {I_{4b}}{{I_{4c}}}^3 + 14316{{I_{4c}}}^4 +
  2{{I_{4a}}}^2( 12827{{I_{4b}}}^2 - 16699\imag {I_{4b}}{I_{4c}} +
  20418{{I_{4c}}}^2 - 30{I_3}( 36{I_{5a}} + 3\imag {I_{5b}} - 239{I_{5c}} ) )\\
 + 5{I_3}( {I_{5a}} ( 4151{{I_{4b}}}^2 + 1266\imag {I_{4b}}{I_{4c}} - 2102{{I_{4c}}}^2 )  + 2{{I_{4c}}}^2( -1683\imag {I_{5b}} + 3704{I_{5c}} )  +
        2{I_{4b}}{I_{4c}}( 4178{I_{5b}} - 3783\imag {I_{5c}} - 2548{I_{5d}} )\\
 - 7\imag {{I_{4b}}}^2( 985{I_{5b}} - 803\imag {I_{5c}} - 615{I_{5d}} )
        )  + {I_{4a}}( 53221\imag {{I_{4b}}}^3 - 30438{{I_{4b}}}^2{I_{4c}} +
        73088\imag {I_{4b}}{{I_{4c}}}^2 - 41244{{I_{4c}}}^3\\
 + 10{I_3}( {I_{5a}}( -263\imag {I_{4b}} + 1267{I_{4c}} )  +
           7{I_{4c}}( 243\imag {I_{5b}} - 734{I_{5c}} )  +
           {I_{4b}}( -5258{I_{5b}} + 1908\imag {I_{5c}} + 1323{I_{5d}} )  )
        )  + 25{{I_3}}^2( 36{{I_{5a}}}^2 \\
+ 35\imag {I_{6a}}{I_{4b}} +
        762{{I_{5b}}}^2 - 252{I_{4b}}{I_{6b}} +
        {I_{5a}}( 6\imag {I_{5b}} - 478{I_{5c}} )  - 678\imag {I_{5b}}{I_{5c}} +
        932{{I_{5c}}}^2 + 231\imag {I_{4b}}{I_{6c}} - 448{I_{4b}}{I_{6d}} )  ) \\
 + {{I_{4a}}}^2( -3150\imag {{I_3}}^5{I_{4b}} +
     6( 1904{{I_{4b}}}^4 - 29341\imag {{I_{4b}}}^3{I_{4c}} +
        17339{{I_{4b}}}^2{{I_{4c}}}^2 - 19765\imag {I_{4b}}{{I_{4c}}}^3 + 9915{{I_{4c}}}^4)\\
 + 5{I_3}( {I_{5a}}( 2621{{I_{4b}}}^2 + 2936\imag {I_{4b}}{I_{4c}} - 5363{{I_{4c}}}^2 )  + {{I_{4c}}}^2( -8235\imag {I_{5b}} + 19384{I_{5c}} )  +
        2{I_{4b}}{I_{4c}}( 10482{I_{5b}} - 6767\imag {I_{5c}} \\
 - 5733{I_{5d}} )  + {{I_{4b}}}^2( -14024\imag {I_{5b}} - 3622{I_{5c}} + 6685\imag {I_{5d}} ))  + 150{{I_3}}^2( 3{{I_{5a}}}^2 + 7\imag {I_{6a}}{I_{4b}} +
        158{{I_{5b}}}^2 - 70{I_{4b}}{I_{6b}} +
        {I_{5a}}( -10\imag {I_{5b}} -\\
 48{I_{5c}} )  - 46\imag {I_{5b}}{I_{5c}} +
        143{{I_{5c}}}^2 + 7\imag {I_{4b}}{I_{6c}} - 70{I_{4b}}{I_{6d}} )  )  -
  12250\imag {{I_3}}^3{{I_{4b}}}^2{I_{7d}} - 11025\imag {{I_3}}^2{{I_{4b}}}^3{I_{6e}} + 14700{{I_3}}^2{{I_{4b}}}^2{I_{4c}}{I_{6e}} \\
+{I_{4a}}( 175{{I_3}}^5{I_{4b}}( 834{I_{4b}} - 119\imag {I_{4c}} )  +
     6( 17444\imag {{I_{4b}}}^5 + 5572{{I_{4b}}}^4{I_{4c}} +
        46581\imag {{I_{4b}}}^3{{I_{4c}}}^2 - 13791{{I_{4b}}}^2{{I_{4c}}}^3 \\
+  10845\imag {I_{4b}}{{I_{4c}}}^4 - 3941{{I_{4c}}}^5 )  +
     5{I_3}( {I_{5a}}( 6321\imag {{I_{4b}}}^3 - 1947{{I_{4b}}}^2{I_{4c}} -
           5456\imag {I_{4b}}{{I_{4c}}}^2 + 6622{{I_{4c}}}^3 )  +
        14{{I_{4c}}}^3( 855\imag {I_{5b}} \\
- 1054{I_{5c}} )  -
        2{I_{4b}}{{I_{4c}}}^2( 9502{I_{5b}} - 5507\imag {I_{5c}} - 6713{I_{5d}} )
            + 3{{I_{4b}}}^2{I_{4c}}( 17421\imag {I_{5b}} + 173{I_{5c}} -
           6125\imag {I_{5d}} )  \\
+ 7{{I_{4b}}}^3( 575{I_{5b}} - 2415\imag {I_{5c}} + 197{I_{5d}} )  )  +
     25{{I_3}}^2( 3224\imag {I_{4b}}{{I_{5b}}}^2 - 1281\imag {{I_{4b}}}^2{I_{6b}} +
        {{I_{5a}}}^2( 68\imag {I_{4b}} - 126{I_{4c}} ) \\
 +7{I_{6a}}{I_{4b}}( 9{I_{4b}} - 7\imag {I_{4c}} )  -
        966{{I_{5b}}}^2{I_{4c}} - 1700{I_{4b}}{I_{5b}}{I_{5c}} +
        2562\imag {I_{5b}}{I_{4c}}{I_{5c}} + 334\imag {I_{4b}}{{I_{5c}}}^2 -
        2086{I_{4c}}{{I_{5c}}}^2 + 700{{I_{4b}}}^2{I_{6c}} \\
 - 1029\imag {I_{4b}}{I_{4c}}{I_{6c}} +
        2{I_{5a}}( 7{I_{4c}}( -15\imag {I_{5b}} + 109{I_{5c}} )  +
           {I_{4b}}( 17{I_{5b}} - 201\imag {I_{5c}} - 196{I_{5d}} )  )  -
        784\imag {I_{4b}}{I_{5b}}{I_{5d}} + 1078{I_{4b}}{I_{5c}}{I_{5d}}\\
- 301\imag {{I_{4b}}}^2{I_{6d}} + 980{I_{4b}}{I_{4c}}{I_{6d}} -
        343{{I_{4b}}}^2{I_{6e}} )  )  + 6125{{I_3}}^3{{I_{4b}}}^2{I_{7e}} .
 \end{multline*}
}

\hspace{-60pt}\vbox{\footnotesize
 \begin{multline*}
\mathfrak{J}_3=
142500{{I_2}}^3{{I_3}}^9 + 216{{I_{4a}}}^6 - 125{{I_3}}^3{{I_{5a}}}^3 -
  30625\imag {{I_3}}^6{I_{5a}}{I_{4b}} + 2625\imag {{I_3}}^3{I_{5a}}{I_{6a}}{I_{4b}} +  4550{{I_3}}^2{{I_{5a}}}^2{{I_{4b}}}^2\\
 + 6125{{I_3}}^3{I_{7a}}{{I_{4b}}}^2 -
  341775\imag {{I_3}}^5{{I_{4b}}}^3 - 13475\imag {{I_3}}^2{I_{6a}}{{I_{4b}}}^3 -
  24990{I_3}{I_{5a}}{{I_{4b}}}^4 - 98784{{I_{4b}}}^6 -
  3375\imag {{I_3}}^3{{I_{5a}}}^2{I_{5b}}\\
  + 147000{{I_3}}^6{I_{4b}}{I_{5b}} -
  17500{{I_3}}^3{I_{6a}}{I_{4b}}{I_{5b}} +
  29225\imag {{I_3}}^2{I_{5a}}{{I_{4b}}}^2{I_{5b}} -
  108290\imag {I_3}{{I_{4b}}}^4{I_{5b}} + 24250{{I_3}}^3{I_{5a}}{{I_{5b}}}^2\\
  - 31675{{I_3}}^2{{I_{4b}}}^2{{I_{5b}}}^2 + 36000\imag {{I_3}}^3{{I_{5b}}}^3 -
  14000{{I_3}}^3{I_{5a}}{I_{4b}}{I_{6b}} + 46550{{I_3}}^2{{I_{4b}}}^3{I_{6b}} -
  64750\imag {{I_3}}^3{I_{4b}}{I_{5b}}{I_{6b}}\\
  + 24500\imag {{I_3}}^3{{I_{4b}}}^2{I_{7b}} +
  36\imag {{I_{4a}}}^5( 29{I_{4b}} + 126\imag {I_{4c}} )  +
  12450\imag {{I_3}}^2{{I_{5a}}}^2{I_{4b}}{I_{4c}} + 314825{{I_3}}^5{{I_{4b}}}^2{I_{4c}} -
  15750{{I_3}}^2{I_{6a}}{{I_{4b}}}^2{I_{4c}} \\
  + 91105\imag {I_3}{I_{5a}}{{I_{4b}}}^3{I_{4c}} - 22344\imag {{I_{4b}}}^5{I_{4c}} -
  23200{{I_3}}^2{I_{5a}}{I_{4b}}{I_{5b}}{I_{4c}} -
  116305{I_3}{{I_{4b}}}^3{I_{5b}}{I_{4c}} +
  8300\imag {{I_3}}^2{I_{4b}}{{I_{5b}}}^2{I_{4c}} \\
  - 43575\imag {{I_3}}^2{{I_{4b}}}^2{I_{6b}}{I_{4c}} +
  2700{{I_3}}^2{{I_{5a}}}^2{{I_{4c}}}^2 + 91875\imag {{I_3}}^5{I_{4b}}{{I_{4c}}}^2 -
  12775\imag {{I_3}}^2{I_{6a}}{I_{4b}}{{I_{4c}}}^2 -
  67700{I_3}{I_{5a}}{{I_{4b}}}^2{{I_{4c}}}^2 \\
  - 147336{{I_{4b}}}^4{{I_{4c}}}^2 +
  36350\imag {{I_3}}^2{I_{5a}}{I_{5b}}{{I_{4c}}}^2 -
  99945\imag {I_3}{{I_{4b}}}^2{I_{5b}}{{I_{4c}}}^2 -
  64350{{I_3}}^2{{I_{5b}}}^2{{I_{4c}}}^2 + 39550{{I_3}}^2{I_{4b}}{I_{6b}}{{I_{4c}}}^2\\
  + 18680\imag {I_3}{I_{5a}}{I_{4b}}{{I_{4c}}}^3 - 105342\imag {{I_{4b}}}^3{{I_{4c}}}^3 -
  55910{I_3}{I_{4b}}{I_{5b}}{{I_{4c}}}^3 - 13315{I_3}{I_{5a}}{{I_{4c}}}^4 +
  4314{{I_{4b}}}^2{{I_{4c}}}^4 - 31635\imag {I_3}{I_{5b}}{{I_{4c}}}^4\\
  - 12564\imag {I_{4b}}{{I_{4c}}}^5 + 2556{{I_{4c}}}^6 -
  25{{I_2}}^2{{I_3}}^6( -2298{{I_{4a}}}^2 +
     6{I_{4a}}( -3475\imag {I_{4b}} + 1701{I_{4c}} )  +
     4( 4270{{I_{4b}}}^2 + 5235\imag {I_{4b}}{I_{4c}}\\
  - 1977{{I_{4c}}}^2 )  +
     5{I_3}( 383{I_{5a}} + 2271\imag {I_{5b}} - 1888{I_{5c}} )  )  -
  6{{I_{4a}}}^4( -534{{I_{4b}}}^2 + 3515\imag {I_{4b}}{I_{4c}} - 4715{{I_{4c}}}^2 +
     90{I_3}( {I_{5a}} + 9\imag {I_{5b}} - 8{I_{5c}} )  )  \\
  + 3000{{I_3}}^3{{I_{5a}}}^2{I_{5c}} + 116375\imag {{I_3}}^6{I_{4b}}{I_{5c}} -
  14875\imag {{I_3}}^3{I_{6a}}{I_{4b}}{I_{5c}} -
  26250{{I_3}}^2{I_{5a}}{{I_{4b}}}^2{I_{5c}} + 148470{I_3}{{I_{4b}}}^4{I_{5c}} \\
  + 41750\imag {{I_3}}^3{I_{5a}}{I_{5b}}{I_{5c}} -
  63525\imag {{I_3}}^2{{I_{4b}}}^2{I_{5b}}{I_{5c}} - 83750{{I_3}}^3{{I_{5b}}}^2{I_{5c}} +
  50750{{I_3}}^3{I_{4b}}{I_{6b}}{I_{5c}} -
  20350\imag {{I_3}}^2{I_{5a}}{I_{4b}}{I_{4c}}{I_{5c}} \\
 - 32795\imag {I_3}{{I_{4b}}}^3{I_{4c}}{I_{5c}} +
  21450{{I_3}}^2{I_{4b}}{I_{5b}}{I_{4c}}{I_{5c}} -
  30950{{I_3}}^2{I_{5a}}{{I_{4c}}}^2{I_{5c}} +
  95455{I_3}{{I_{4b}}}^2{{I_{4c}}}^2{I_{5c}} -
  92350\imag {{I_3}}^2{I_{5b}}{{I_{4c}}}^2{I_{5c}} \\
  -15670\imag {I_3}{I_{4b}}{{I_{4c}}}^3{I_{5c}} + 18320{I_3}{{I_{4c}}}^4{I_{5c}} -
  17875{{I_3}}^3{I_{5a}}{{I_{5c}}}^2 + 21700{{I_3}}^2{{I_{4b}}}^2{{I_{5c}}}^2 -
  62875\imag {{I_3}}^3{I_{5b}}{{I_{5c}}}^2 \\
  + 20150\imag {{I_3}}^2{I_{4b}}{I_{4c}}{{I_{5c}}}^2 +
  30700{{I_3}}^2{{I_{4c}}}^2{{I_{5c}}}^2 + 15000{{I_3}}^3{{I_{5c}}}^3 -
  20125\imag {{I_3}}^3{I_{5a}}{I_{4b}}{I_{6c}} +
  75950\imag {{I_3}}^2{{I_{4b}}}^3{I_{6c}} \\
  + 77000{{I_3}}^3{I_{4b}}{I_{5b}}{I_{6c}} +
  25200{{I_3}}^2{{I_{4b}}}^2{I_{4c}}{I_{6c}} +
  40775\imag {{I_3}}^2{I_{4b}}{{I_{4c}}}^2{I_{6c}} +
  56875\imag {{I_3}}^3{I_{4b}}{I_{5c}}{I_{6c}} - 36750{{I_3}}^3{{I_{4b}}}^2{I_{7c}}\\
  - 6125\imag {{I_3}}^2{I_{5a}}{{I_{4b}}}^2{I_{5d}} +
  65170\imag {I_3}{{I_{4b}}}^4{I_{5d}} + 29400{{I_3}}^2{{I_{4b}}}^2{I_{5b}}{I_{5d}} +
  22050{{I_3}}^2{I_{5a}}{I_{4b}}{I_{4c}}{I_{5d}} +
  7595{I_3}{{I_{4b}}}^3{I_{4c}}{I_{5d}} \\
  + 61250\imag {{I_3}}^2{I_{4b}}{I_{5b}}{I_{4c}}{I_{5d}} +
  63210\imag {I_3}{{I_{4b}}}^2{{I_{4c}}}^2{I_{5d}} -
  21560{I_3}{I_{4b}}{{I_{4c}}}^3{I_{5d}} +
  6125\imag {{I_3}}^2{{I_{4b}}}^2{I_{5c}}{I_{5d}} -
  39200{{I_3}}^2{I_{4b}}{I_{4c}}{I_{5c}}{I_{5d}} \\
  + 8575{{I_3}}^2{{I_{4b}}}^2{{I_{5d}}}^2 +
  6{{I_{4a}}}^3( 3317\imag {{I_{4b}}}^3 - 3321{{I_{4b}}}^2{I_{4c}} +
     19595\imag {I_{4b}}{{I_{4c}}}^2 - 10395{{I_{4c}}}^3 +
     5{I_3}( {I_{5a}}( 89\imag {I_{4b}} + 252{I_{4c}} )\\
 +14{I_{4c}}( 127\imag {I_{5b}} - 109{I_{5c}} )  +
        {I_{4b}}( 522{I_{5b}} + 219\imag {I_{5c}} + 392{I_{5d}} )  )  )  +
  8750{{I_3}}^3{I_{5a}}{I_{4b}}{I_{6d}} - 66150{{I_3}}^2{{I_{4b}}}^3{I_{6d}}\\
 + 29750\imag {{I_3}}^3{I_{4b}}{I_{5b}}{I_{6d}} -
  17325\imag {{I_3}}^2{{I_{4b}}}^2{I_{4c}}{I_{6d}} -
  14000{{I_3}}^2{I_{4b}}{{I_{4c}}}^2{I_{6d}} - 21000{{I_3}}^3{I_{4b}}{I_{5c}}{I_{6d}} +5{I_2}{{I_3}}^3( 1296{{I_{4a}}}^4 \\
  + 47775\imag {{I_3}}^5{I_{4b}} - 15974{{I_{4b}}}^4 +
     6\imag {{I_{4a}}}^3( 2943{I_{4b}} + 2534\imag {I_{4c}} )  +
     42007\imag {{I_{4b}}}^3{I_{4c}} - 102974{{I_{4b}}}^2{{I_{4c}}}^2 -
     67008\imag {I_{4b}}{{I_{4c}}}^3 \\
+ 14316{{I_{4c}}}^4 -2{{I_{4a}}}^2( 22927{{I_{4b}}}^2 + 47787\imag {I_{4b}}{I_{4c}} -
        20418{{I_{4c}}}^2 + 30{I_3}( 36{I_{5a}} + 275\imag {I_{5b}} - 239{I_{5c}} )
        )  - 5{I_3}( {I_{5a}} ( 5481{{I_{4b}}}^2 \\
- 32\imag {I_{4b}}{I_{4c}} + 2102{{I_{4c}}}^2 )  +
        2{{I_{4c}}}^2( 4755\imag {I_{5b}} - 3704{I_{5c}} )  +
        7\imag {{I_{4b}}}^2( 131{I_{5b}} + 503\imag {I_{5c}} - 1155{I_{5d}} )  +
        2{I_{4b}}{I_{4c}}( 6955{I_{5b}} \\
+ 4391\imag {I_{5c}} + 2548{I_{5d}} )
        )  + {I_{4a}}( -21007\imag {{I_{4b}}}^3 + 142428{{I_{4b}}}^2{I_{4c}} +
        144924\imag {I_{4b}}{{I_{4c}}}^2 - 41244{{I_{4c}}}^3 +
        10{I_3}( {I_{5a}}( -761\imag {I_{4b}} \\
+ 1267{I_{4c}} )  +
           7{I_{4c}}( 915\imag {I_{5b}} - 734{I_{5c}} )  +
           {I_{4b}}( 7290{I_{5b}} + 5206\imag {I_{5c}} + 1323{I_{5d}} )  ))  + 25{{I_3}}^2( 36{{I_{5a}}}^2 - 203\imag {I_{6a}}{I_{4b}} -
        1446{{I_{5b}}}^2 \\
+ 854{I_{4b}}{I_{6b}} +
        {I_{5a}}( 550\imag {I_{5b}} - 478{I_{5c}} )  -
        2342\imag {I_{5b}}{I_{5c}} + 932{{I_{5c}}}^2 + 1099\imag {I_{4b}}{I_{6c}} -
        448{I_{4b}}{I_{6d}} )  )  +
  {{I_{4a}}}^2( 36750\imag {{I_3}}^5{I_{4b}} \\
- 6( 56{{I_{4b}}}^4 + 32091\imag {{I_{4b}}}^3{I_{4c}} +
        6741{{I_{4b}}}^2{{I_{4c}}}^2 + 31435\imag {I_{4b}}{{I_{4c}}}^3 - 9915{{I_{4c}}}^4
        )  - 5{I_3}( {I_{5a}}
         ( 5105{{I_{4b}}}^2 + 3932\imag {I_{4b}}{I_{4c}} + 5363{{I_{4c}}}^2 )\\
+{{I_{4c}}}^2( 24747\imag {I_{5b}} - 19384{I_{5c}} )  +
        \imag {{I_{4b}}}^2( 3364{I_{5b}} + 8206\imag {I_{5c}} - 9947{I_{5d}} )  +
        2{I_{4b}}{I_{4c}}( 9648{I_{5b}} + 5881\imag {I_{5c}} + 5733{I_{5d}} )) \\
 + 150{{I_3}}^2( 3{{I_{5a}}}^2 - 21\imag {I_{6a}}{I_{4b}} -
        194{{I_{5b}}}^2 + 112{I_{4b}}{I_{6b}} +
        6\imag {I_{5a}}( 9{I_{5b}} + 8\imag {I_{5c}} )  -
        334\imag {I_{5b}}{I_{5c}} + 143{{I_{5c}}}^2 + 161\imag {I_{4b}}{I_{6c}}\\
 - 70{I_{4b}}{I_{6d}} )  )  - 24500\imag {{I_3}}^3{{I_{4b}}}^2{I_{7d}} -
  23275\imag {{I_3}}^2{{I_{4b}}}^3{I_{6e}} + 14700{{I_3}}^2{{I_{4b}}}^2{I_{4c}}{I_{6e}} +
  {I_{4a}}( -1225{{I_3}}^5{I_{4b}}( 262{I_{4b}} + 105\imag {I_{4c}} ) \\
+ 6( 15484\imag {{I_{4b}}}^5 + 33012{{I_{4b}}}^4{I_{4c}} +
        40331\imag {{I_{4b}}}^3{{I_{4c}}}^2 + 8809{{I_{4b}}}^2{{I_{4c}}}^3 +
        17275\imag {I_{4b}}{{I_{4c}}}^4 - 3941{{I_{4c}}}^5 )  +
     5{I_3}( {I_{5a}}( -4711\imag {{I_{4b}}}^3 \\
 + 25945{{I_{4b}}}^2{I_{4c}} - 338\imag {I_{4b}}{{I_{4c}}}^2 + 6622{{I_{4c}}}^3 )  +
        14{{I_{4c}}}^3( 1527\imag {I_{5b}} - 1054{I_{5c}} )  +
        \imag {{I_{4b}}}^2{I_{4c}}( 25353{I_{5b}} + 26897\imag {I_{5c}} -
           27489{I_{5d}} ) \\
 + 7{{I_{4b}}}^3
         ( 93{I_{5b}} - 13\imag {I_{5c}} - 567{I_{5d}} )  +
        2{I_{4b}}{{I_{4c}}}^2( 13673{I_{5b}} + 6791\imag {I_{5c}} + 6713{I_{5d}} )
            )  + 25{{I_3}}^2( -732\imag {I_{4b}}{{I_{5b}}}^2 +
        903\imag {{I_{4b}}}^2{I_{6b}} \\
+ {{I_{5a}}}^2( -118\imag {I_{4b}} - 126{I_{4c}} )  +
        7{I_{6a}}{I_{4b}}( 15{I_{4b}} + 91\imag {I_{4c}} )  +
        3738{{I_{5b}}}^2{I_{4c}} - 2254{I_{4b}}{I_{6b}}{I_{4c}} +
        452{I_{4b}}{I_{5b}}{I_{5c}} + 5698\imag {I_{5b}}{I_{4c}}{I_{5c}}\\
- 6\imag {I_{4b}}{{I_{5c}}}^2 - 2086{I_{4c}}{{I_{5c}}}^2 -
        1148{{I_{4b}}}^2{I_{6c}} - 2597\imag {I_{4b}}{I_{4c}}{I_{6c}} +
        2{I_{5a}}( 7{I_{4c}}( -127\imag {I_{5b}} + 109{I_{5c}} )  +
           {I_{4b}}( 229{I_{5b}} + 307\imag {I_{5c}} \\
- 196{I_{5d}} )  )  - 1470\imag {I_{4b}}{I_{5b}}{I_{5d}} + 1078{I_{4b}}{I_{5c}}{I_{5d}} -
        7\imag {{I_{4b}}}^2{I_{6d}} + 980{I_{4b}}{I_{4c}}{I_{6d}} -
        343{{I_{4b}}}^2{I_{6e}} )  )  + 6125{{I_3}}^3{{I_{4b}}}^2{I_{7e}} .
 \end{multline*}
}

\hspace{-60pt}\vbox{\footnotesize
 \begin{multline*}
\mathfrak{J}_4=
 142500{{I_2}}^3{{I_3}}^9 + 216{{I_{4a}}}^6 - 125{{I_3}}^3{{I_{5a}}}^3 -
  27125\imag {{I_3}}^6{I_{5a}}{I_{4b}} + 2625\imag {{I_3}}^3{I_{5a}}{I_{6a}}{I_{4b}} -  350{{I_3}}^2{{I_{5a}}}^2{{I_{4b}}}^2 \\
 + 6125{{I_3}}^3{I_{7a}}{{I_{4b}}}^2 +
  15925\imag {{I_3}}^5{{I_{4b}}}^3 + 25725\imag {{I_3}}^2{I_{6a}}{{I_{4b}}}^3 +
  24990{I_3}{I_{5a}}{{I_{4b}}}^4 + 98784{{I_{4b}}}^6 -
  2875\imag {{I_3}}^3{{I_{5a}}}^2{I_{5b}} \\
 - 7000{{I_3}}^6{I_{4b}}{I_{5b}} -
  10500{{I_3}}^3{I_{6a}}{I_{4b}}{I_{5b}} -
  63175\imag {{I_3}}^2{I_{5a}}{{I_{4b}}}^2{I_{5b}} +
  37730\imag {I_3}{{I_{4b}}}^4{I_{5b}} + 11750{{I_3}}^3{I_{5a}}{{I_{5b}}}^2 \\
 +50225{{I_3}}^2{{I_{4b}}}^2{{I_{5b}}}^2 - 36000\imag {{I_3}}^3{{I_{5b}}}^3 -
  10500{{I_3}}^3{I_{5a}}{I_{4b}}{I_{6b}} - 17150{{I_3}}^2{{I_{4b}}}^3{I_{6b}} +
  36750\imag {{I_3}}^3{I_{4b}}{I_{5b}}{I_{6b}}\\
 + 36\imag {{I_{4a}}}^5( 5{I_{4b}} + 126\imag {I_{4c}} )  +
  12550\imag {{I_3}}^2{{I_{5a}}}^2{I_{4b}}{I_{4c}} - 44275{{I_3}}^5{{I_{4b}}}^2{I_{4c}} -
  14350{{I_3}}^2{I_{6a}}{{I_{4b}}}^2{I_{4c}} -
  55615\imag {I_3}{I_{5a}}{{I_{4b}}}^3{I_{4c}} \\
 - 76440\imag {{I_{4b}}}^5{I_{4c}} +
  19400{{I_3}}^2{I_{5a}}{I_{4b}}{I_{5b}}{I_{4c}} +
  169995{I_3}{{I_{4b}}}^3{I_{5b}}{I_{4c}} -
  86700\imag {{I_3}}^2{I_{4b}}{{I_{5b}}}^2{I_{4c}} +
  64925\imag {{I_3}}^2{{I_{4b}}}^2{I_{6b}}{I_{4c}} \\
 + 2700{{I_3}}^2{{I_{5a}}}^2{{I_{4c}}}^2 + 42175\imag {{I_3}}^5{I_{4b}}{{I_{4c}}}^2 -
  12775\imag {{I_3}}^2{I_{6a}}{I_{4b}}{{I_{4c}}}^2 +
  10900{I_3}{I_{5a}}{{I_{4b}}}^2{{I_{4c}}}^2 + 120288{{I_{4b}}}^4{{I_{4c}}}^2\\
 + 25650\imag {{I_3}}^2{I_{5a}}{I_{5b}}{{I_{4c}}}^2 -
  69665\imag {I_3}{{I_{4b}}}^2{I_{5b}}{{I_{4c}}}^2 +
  57150{{I_3}}^2{{I_{5b}}}^2{{I_{4c}}}^2 - 10150{{I_3}}^2{I_{4b}}{I_{6b}}{{I_{4c}}}^2+
  16540\imag {I_3}{I_{5a}}{I_{4b}}{{I_{4c}}}^3 \\
- 28470\imag {{I_{4b}}}^3{{I_{4c}}}^3 +
  117430{I_3}{I_{4b}}{I_{5b}}{{I_{4c}}}^3 - 13315{I_3}{I_{5a}}{{I_{4c}}}^4 +
  34122{{I_{4b}}}^2{{I_{4c}}}^4 + 19485\imag {I_3}{I_{5b}}{{I_{4c}}}^4 -
  2340\imag {I_{4b}}{{I_{4c}}}^5 \\
+ 2556{{I_{4c}}}^6 -
  25{{I_2}}^2{{I_3}}^6( -2298{{I_{4a}}}^2 +
     2{I_{4a}}( -6625\imag {I_{4b}} + 5103{I_{4c}} )  +
     12( 1085{{I_{4b}}}^2 + 605\imag {I_{4b}}{I_{4c}} - 659{{I_{4c}}}^2 ) \\
+ 5{I_3}( 383{I_{5a}} + 1359\imag {I_{5b}} - 1888{I_{5c}} )  )  -
  6{{I_{4a}}}^4( -942{{I_{4b}}}^2 + 635\imag {I_{4b}}{I_{4c}} - 4715{{I_{4c}}}^2 +
     30{I_3}( 3{I_{5a}} + 23\imag {I_{5b}} - 24{I_{5c}} )  ) \\
+ 3000{{I_3}}^3{{I_{5a}}}^2{I_{5c}} + 63875\imag {{I_3}}^6{I_{4b}}{I_{5c}} -
  14875\imag {{I_3}}^3{I_{6a}}{I_{4b}}{I_{5c}} -
  11550{{I_3}}^2{I_{5a}}{{I_{4b}}}^2{I_{5c}} - 148470{I_3}{{I_{4b}}}^4{I_{5c}}\\
+ 30250\imag {{I_3}}^3{I_{5a}}{I_{5b}}{I_{5c}} +
  72975\imag {{I_3}}^2{{I_{4b}}}^2{I_{5b}}{I_{5c}} + 47750{{I_3}}^3{{I_{5b}}}^2{I_{5c}} -
  1750{{I_3}}^3{I_{4b}}{I_{6b}}{I_{5c}} -
  18450\imag {{I_3}}^2{I_{5a}}{I_{4b}}{I_{4c}}{I_{5c}} \\
+ 50225\imag {I_3}{{I_{4b}}}^3{I_{4c}}{I_{5c}} +
  22950{{I_3}}^2{I_{4b}}{I_{5b}}{I_{4c}}{I_{5c}} -
  30950{{I_3}}^2{I_{5a}}{{I_{4c}}}^2{I_{5c}} -
  89825{I_3}{{I_{4b}}}^2{{I_{4c}}}^2{I_{5c}} +
  18450\imag {{I_3}}^2{I_{5b}}{{I_{4c}}}^2{I_{5c}} \\
- 53150\imag {I_3}{I_{4b}}{{I_{4c}}}^3{I_{5c}} + 18320{I_3}{{I_{4c}}}^4{I_{5c}} -
  17875{{I_3}}^3{I_{5a}}{{I_{5c}}}^2 + 11900{{I_3}}^2{{I_{4b}}}^2{{I_{5c}}}^2 -
  2875\imag {{I_3}}^3{I_{5b}}{{I_{5c}}}^2 \\
- 30850\imag {{I_3}}^2{I_{4b}}{I_{4c}}{{I_{5c}}}^2 +
  30700{{I_3}}^2{{I_{4c}}}^2{{I_{5c}}}^2 + 15000{{I_3}}^3{{I_{5c}}}^3 -
  16625\imag {{I_3}}^3{I_{5a}}{I_{4b}}{I_{6c}} -
  46550\imag {{I_3}}^2{{I_{4b}}}^3{I_{6c}} \\
- 49000{{I_3}}^3{I_{4b}}{I_{5b}}{I_{6c}} -
  34300{{I_3}}^2{{I_{4b}}}^2{I_{4c}}{I_{6c}} -
  8925\imag {{I_3}}^2{I_{4b}}{{I_{4c}}}^2{I_{6c}} +
  4375\imag {{I_3}}^3{I_{4b}}{I_{5c}}{I_{6c}} + 12250{{I_3}}^3{{I_{4b}}}^2{I_{7c}}\\
- 5425\imag {{I_3}}^2{I_{5a}}{{I_{4b}}}^2{I_{5d}} +
  5390\imag {I_3}{{I_{4b}}}^4{I_{5d}} - 65100{{I_3}}^2{{I_{4b}}}^2{I_{5b}}{I_{5d}} +
  22050{{I_3}}^2{I_{5a}}{I_{4b}}{I_{4c}}{I_{5d}} -
  18165{I_3}{{I_{4b}}}^3{I_{4c}}{I_{5d}} \\
- 22050\imag {{I_3}}^2{I_{4b}}{I_{5b}}{I_{4c}}{I_{5d}} +
  36610\imag {I_3}{{I_{4b}}}^2{{I_{4c}}}^2{I_{5d}} -
  21560{I_3}{I_{4b}}{{I_{4c}}}^3{I_{5d}} +
  29925\imag {{I_3}}^2{{I_{4b}}}^2{I_{5c}}{I_{5d}} -
  39200{{I_3}}^2{I_{4b}}{I_{4c}}{I_{5c}}{I_{5d}} \\
+ 8575{{I_3}}^2{{I_{4b}}}^2{{I_{5d}}}^2 +
  6{{I_{4a}}}^3( 365\imag {{I_{4b}}}^3 - 11213{{I_{4b}}}^2{I_{4c}} +
     3615\imag {I_{4b}}{{I_{4c}}}^2 - 10395{{I_{4c}}}^3 +
     5{I_3}( {I_{5a}}( 137\imag {I_{4b}} + 252{I_{4c}} )\\
+ 14{I_{4c}}( 93\imag {I_{5b}} - 109{I_{5c}} )  +
        {I_{4b}}( -146{I_{5b}} - 165\imag {I_{5c}} + 392{I_{5d}} )  )  )  +
  8750{{I_3}}^3{I_{5a}}{I_{4b}}{I_{6d}} + 36750{{I_3}}^2{{I_{4b}}}^3{I_{6d}} \\
- 1750\imag {{I_3}}^3{I_{4b}}{I_{5b}}{I_{6d}} +
  30275\imag {{I_3}}^2{{I_{4b}}}^2{I_{4c}}{I_{6d}} -
  14000{{I_3}}^2{I_{4b}}{{I_{4c}}}^2{I_{6d}} - 21000{{I_3}}^3{I_{4b}}{I_{5c}}{I_{6d}} + 5{I_2}{{I_3}}^3( 1296{{I_{4a}}}^4 \\
+ 34475\imag {{I_3}}^5{I_{4b}} - 1666{{I_{4b}}}^4 +
     6\imag {{I_{4a}}}^3( 2087{I_{4b}} + 2534\imag {I_{4c}} )  -
     37093\imag {{I_{4b}}}^3{I_{4c}} + 37470{{I_{4b}}}^2{{I_{4c}}}^2 -
     13152\imag {I_{4b}}{{I_{4c}}}^3 \\
+ 14316{{I_{4c}}}^4 -
     2{{I_{4a}}}^2( -5925{{I_{4b}}}^2 + 20423\imag {I_{4b}}{I_{4c}} -
        20418{{I_{4c}}}^2 + 30{I_3}( 36{I_{5a}} + 213\imag {I_{5b}} - 239{I_{5c}} )
        )  + 5{I_3}( {I_{5a}}   ( 175{{I_{4b}}}^2 \\
- 764\imag {I_{4b}}{I_{4c}} - 2102{{I_{4c}}}^2 )  +
        2{{I_{4c}}}^2( -1683\imag {I_{5b}} + 3704{I_{5c}} )  +
        2{I_{4b}}{I_{4c}}( 4787{I_{5b}} - 283\imag {I_{5c}} - 2548{I_{5d}} )  -
        7\imag {{I_{4b}}}^2( 939{I_{5b}} \\
- 445\imag {I_{5c}} - 295{I_{5d}} )
        )  + {I_{4a}}( 37093\imag {{I_{4b}}}^3 - 36920{{I_{4b}}}^2{I_{4c}} +
        41476\imag {I_{4b}}{{I_{4c}}}^2 - 41244{{I_{4c}}}^3 +
        10{I_3}( {I_{5a}}( -333\imag {I_{4b}} + 1267{I_{4c}} )\\
+ 7{I_{4c}}( 423\imag {I_{5b}} - 734{I_{5c}} )  + {I_{4b}}( -2122{I_{5b}} + 1978\imag {I_{5c}} + 1323{I_{5d}} ) ) )  + 25{{I_3}}^2( 36{{I_{5a}}}^2 - 203\imag {I_{6a}}{I_{4b}} +  6{{I_{5b}}}^2 + 322{I_{4b}}{I_{6b}} \\
+ {I_{5a}}( 426\imag {I_{5b}} - 478{I_{5c}} )  -
        1014\imag {I_{5b}}{I_{5c}} + 932{{I_{5c}}}^2 + 567\imag {I_{4b}}{I_{6c}} -
        448{I_{4b}}{I_{6d}} )  )  +
  {{I_{4a}}}^2( 32550\imag {{I_3}}^5{I_{4b}} +
     6( 7308{{I_{4b}}}^4 \\
- 5975\imag {{I_{4b}}}^3{I_{4c}} +
        31787{{I_{4b}}}^2{{I_{4c}}}^2 - 5835\imag {I_{4b}}{{I_{4c}}}^3 + 9915{{I_{4c}}}^4
        )  - 5{I_3}( {I_{5a}}
         ( 1985{{I_{4b}}}^2 + 6836\imag {I_{4b}}{I_{4c}} + 5363{{I_{4c}}}^2 ) \\
 + {{I_{4c}}}^2( 9243\imag {I_{5b}} - 19384{I_{5c}} )  -
        2{I_{4b}}{I_{4c}}( 10944{I_{5b}} + 2375\imag {I_{5c}} - 5733{I_{5d}} )  +
        \imag {{I_{4b}}}^2( 7108{I_{5b}} + 5590\imag {I_{5c}} - 1547{I_{5d}} )) \\
+ 150{{I_3}}^2( 3{{I_{5a}}}^2 - 21\imag {I_{6a}}{I_{4b}} -
        94{{I_{5b}}}^2 + 84{I_{4b}}{I_{6b}} +
        {I_{5a}}( 46\imag {I_{5b}} - 48{I_{5c}} )  - 242\imag {I_{5b}}{I_{5c}} +
        143{{I_{5c}}}^2 + 133\imag {I_{4b}}{I_{6c}} - 70{I_{4b}}{I_{6d}} )  ) \\
- 18375\imag {{I_3}}^2{{I_{4b}}}^3{I_{6e}} + 14700{{I_3}}^2{{I_{4b}}}^2{I_{4c}}{I_{6e}} -
  {I_{4a}}( 175{{I_3}}^5{I_{4b}}( 22{I_{4b}} + 427\imag {I_{4c}} )  -
     6\imag ( 980{{I_{4b}}}^5 + 35756\imag {{I_{4b}}}^4{I_{4c}} \\
+ 16355{{I_{4b}}}^3{{I_{4c}}}^2 + 27203\imag {{I_{4b}}}^2{{I_{4c}}}^3 +
        3215{I_{4b}}{{I_{4c}}}^4 + 3941\imag {{I_{4c}}}^5 )  +
     5{I_3}( {I_{5a}}( -7693\imag {{I_{4b}}}^3 + 295{{I_{4b}}}^2{I_{4c}} -
           2706\imag {I_{4b}}{{I_{4c}}}^2 \\
- 6622{{I_{4c}}}^3 )  +
        14{{I_{4c}}}^3( 117\imag {I_{5b}} + 1054{I_{5c}} )  +
        2{I_{4b}}{{I_{4c}}}^2( 22249{I_{5b}} - 3435\imag {I_{5c}} - 6713{I_{5d}} )
            + 7{{I_{4b}}}^3( 3067{I_{5b}} + 1925\imag {I_{5c}} \\
- 2309{I_{5d}} )  + {{I_{4b}}}^2{I_{4c}}( -19041\imag {I_{5b}} - 4775{I_{5c}} +
           3969\imag {I_{5d}} )  )  +
     25{{I_3}}^2( -3868\imag {I_{4b}}{{I_{5b}}}^2 + 2597\imag {{I_{4b}}}^2{I_{6b}} +
        7{I_{6a}}{I_{4b}}( 33{I_{4b}} \\
- 91\imag {I_{4c}} )  +
        1722{{I_{5b}}}^2{I_{4c}} + 98{I_{4b}}{I_{6b}}{I_{4c}} +
        2{{I_{5a}}}^2( 71\imag {I_{4b}} + 63{I_{4c}} )  +
        3268{I_{4b}}{I_{5b}}{I_{5c}} - 714\imag {I_{5b}}{I_{4c}}{I_{5c}} +
        366\imag {I_{4b}}{{I_{5c}}}^2 \\
+ 2086{I_{4c}}{{I_{5c}}}^2 -
        2352{{I_{4b}}}^2{I_{6c}} + 441\imag {I_{4b}}{I_{4c}}{I_{6c}} -
        2{I_{5a}}( 7{I_{4c}}( -93\imag {I_{5b}} + 109{I_{5c}} )  +
           {I_{4b}}( 367{I_{5b}} + 499\imag {I_{5c}} - 196{I_{5d}} )  ) \\
 +  98\imag {I_{4b}}{I_{5b}}{I_{5d}} - 1078{I_{4b}}{I_{5c}}{I_{5d}} -
        329\imag {{I_{4b}}}^2{I_{6d}} - 980{I_{4b}}{I_{4c}}{I_{6d}} +
        343{{I_{4b}}}^2{I_{6e}} )  )  + 6125{{I_3}}^3{{I_{4b}}}^2{I_{7e}} .
 \end{multline*}
}

\pagebreak

\small

\vspace{-10pt} \hspace{-20pt} {\hbox to 12cm{ \hrulefill }}
\vspace{-1pt}

{\footnotesize \hspace{-10pt} Institute of Mathematics and
Statistics, University of Troms\o, Troms\o\ 90-37, Norway.

\hspace{-10pt} E-mails: \quad kruglikov\verb"@"math.uit.no.} \vspace{-1pt}

\end{document}